\theoremstyle{plain}
\newtheorem{thm}{Theorem}[section]
\newtheorem{lem}[thm]{Lemma}
\newtheorem{cor}[thm]{Corollary}
\newtheorem{prop}[thm]{Proposition}
\theoremstyle{remark}
\newtheorem*{rem}{Remark}
\newtheorem*{ex}{Example}
\theoremstyle{definition}
\newtheorem*{defn}{Definition}
\newtheorem{cosa}[thm]{}
\numberwithin{equation}{thm}
\newcommand{\CCD}{\mathcal{D}}
\newcommand{\CE}{\mathcal{E}}
\newcommand{\CF}{\mathcal{F}}
\newcommand{\CM}{\mathcal{M}}
\newcommand{\CO}{\mathcal{O}}
\newcommand{\CS}{\mathcal{S}}
\newcommand{\CV}{\mathcal{V}}
\newcommand{\CU}{\mathcal{U}}
\newcommand{\CW}{\mathcal{W}}
\newcommand{\CY}{\mathcal{Y}}
\newcommand{\CCC}{\boldsymbol{\mathsf{C}}}
\newcommand{\D}{\boldsymbol{\mathsf{D}}}
\newcommand{\K}{\boldsymbol{\mathsf{K}}}
\newcommand{\LL}{\boldsymbol{\mathsf{L}}}
\newcommand{\R}{\boldsymbol{\mathsf{R}}}
\newcommand{\T}{\boldsymbol{\mathsf{T}}}
\newcommand{\bb}{\mathsf{b}}
\newcommand{\CP}{\mathsf{P}}
\newcommand{\tf}{\mathsf{fg}}
\newcommand{\NN}{\mathbb{N}}
\newcommand{\ZZ}{\mathbb{Z}}
\newcommand{\CPX}{\mathbb{C}}
\newcommand{\ia}{{\mathfrak a}}
\newcommand{\ib}{{\mathfrak b}}
\newcommand{\ip}{{\mathfrak p}}
\newcommand{\iq}{{\mathfrak q}}
\newcommand{\im}{{\mathfrak m}}
\newcommand{\idealn}{{\mathfrak n}}
\newcommand{\holim}[1]{\begin{array}[t]{c} {\rm holim}\\[-7.5 pt]
 {\longrightarrow} \\[-7.5 pt] {\scriptstyle {#1}} \end{array}}
\newcommand{\dirlim}[1]{\underset{\scriptstyle {#1}}{\underset{\longrightarrow}{\lim}}\,}
\newcommand{\lto}{\longrightarrow}
\newcommand{\xto}{\xrightarrow}
\newcommand{\ot}{\leftarrow}
\newcommand{\inc}{\hookrightarrow}
\DeclareMathOperator{\iso}{\tilde{\to}}
\DeclareMathOperator{\osi}{\tilde{\ot}}
\newcommand{\imp}{\Rightarrow}
\newcommand{\dimp}{\Leftrightarrow}
\DeclareMathOperator{\Hom}{Hom}
\DeclareMathOperator{\ext}{Ext}
\DeclareMathOperator{\rhom}{\R{}Hom}
\DeclareMathOperator{\tor}{Tor}
\DeclareMathOperator{\hhtt}{ht}
\DeclareMathOperator{\lth}{L}
\DeclareMathOperator{\Img}{Im}
\DeclareMathOperator{\Ker}{Ker}
\DeclareMathOperator{\spec}{Spec}
\DeclareMathOperator{\supp}{Supp}
\DeclareMathOperator{\h}{H}
\DeclareMathOperator{\id}{id}
\DeclareMathOperator{\Ais}{\mathsf{Ais}}
\DeclareMathOperator{\Aisc}{\mathsf{Ais_{cp}}}
\DeclareMathOperator{\Filsup}{\mathsf{Fil_{sp}}}
\newcommand{\Mod}{\mathsf{Mod}}
\newcommand{\Dc}{\D_{\tf}}
\newcommand{\Dbc}{\D_{\tf}^{\bb}}
\newcommand{\Db}{\D^{\bb}}
\newcommand{\aaa}{\mathsf{a}}
\newcommand{\fff}{\mathsf{f}}
\newcommand{\fcm}{\phi_\textsc{cm}}
\DeclareMathOperator{\ass}{Ass}
\DeclareMathOperator{\Min}{Min}
\DeclareMathOperator{\rad}{rad}
\DeclareMathOperator{\V}{V}
\newcommand{\rhomdot}{\R{}\Hom^\cdot}
\newcommand{\x}[1]{#1^{\times}}
\newcommand{\Dgeq}[1]{\D^{\geq #1}}
\newcommand{\Dleq}[1]{\D^{\leq #1}}
\newcommand{\Dg}[1]{\D^{> #1}}
\newcommand{\Dl}[1]{\D^{< #1}}
\newcommand{\Dcacb}[2]{\D^{[#1,#2]}}
\newcommand{\Dmenos}[1]{\D^{-}_{#1}}
\newcommand{\Dmas}[1]{\D^{+}_{#1}}
\newcommand{\Dcmenos}{\Dmenos{\tf}}
\newcommand{\Dcmas}{\Dmas{\tf}}
\newcommand{\Dcleq}[1]{\D_{\tf}^{\leq #1}}
\newcommand{\aisle}[1]{\mathsf{aisle}\langle #1 \rangle}
\newcommand{\cu}[2]{\CU^{#1}_{#2}}
\newcommand{\tgeq}[2]{\tau^{\geq #1}_{#2}}
\newcommand{\tg}[2]{\tau^{> #1}_{#2}}
\newcommand{\tleq}[2]{\tau^{\leq #1}_{#2}}
\newcommand{\tl}[2]{\tau^{< #1}_{#2}}
\newcommand{\ddd}{\mathsf{d}}
\newcommand{\Wdot }{W}
\newcommand{\DDD}{D}
\newcommand{\Xdot}{X}
\newcommand{\Ydot}{Y}
\newcommand{\Mdot}{M}
\newcommand{\Ndot}{N}
\newcommand{\Bdot}{B}
\newcommand{\Tdot}{T}
\newcommand{\Fdot}{F}
\newcommand{\Edot}{E}
\newcommand{\ie}{\text{\it i.e.}}
\newcommand{\cfr}{\text{\it cf.}}
\newcommand{\lc}{\text{\it loc. cit.}}
\begin{document}

\title[Compactly generated $t$-structures]{Compactly generated $t$-structures on the derived category of a Noetherian ring}

\author[L. Alonso]{Leovigildo Alonso Tarr\'{\i}o}
\address[L. A. T.]{Departamento de \'Alxebra\\
Facultade de Matem\'a\-ticas\\
Universidade de Santiago de Compostela\\
E-15782 Santiago de Compostela, SPAIN}
\email{leoalonso@usc.es}

\author[A. Jerem\'{\i}as]{Ana Jerem\'{\i}as L\'opez}
\address[A. J. L.]{Departamento de \'Alxebra\\
Facultade de Matem\'a\-ticas\\
Universidade de Santiago de Compostela\\
E-15782 Santiago de Compostela, SPAIN}
\email{jeremias@usc.es}

\author[M. Saor\'{\i}n]{Manuel Saor\'{\i}n}
\address[M. S. C.]{Departamento de Matem\'a\-ticas\\
Universidad de Murcia, Aptdo. 4021\\
E-30100 Espinardo, Murcia\\, SPAIN}
\email{msaorinc@um.es}

\subjclass[2000]{14B15 (primary); 18E30, 16D90 (secondary)}

\date{\today}
\thanks{L.A.T. and A.J.L. have been partially supported by Spain's MEC and E.U.'s FEDER research projects MTM2005-05754 and MTM2008-03465 together with Xunta de Galicia's grant PGIDIT06PXIC207056PN and acknowledge hospitality and support from Purdue University and Universidad de Murcia.\\
\indent M.S. has been
supported by the D.G.I. of the Spanish Ministry of Education and
the Fundaci\'on ``S\'eneca'' of Murcia, with a part of FEDER funds from
the European Union.\\
\indent \emph{Manuel Saor{\'\i}n dedica este art{\'\i}culo a sus padres en el 50 aniversario de su boda}\!\!
}

\begin{abstract}
We study $t$-structures on $\D(R)$ the derived category of modules over a commutative Noetherian ring $R$ generated by complexes in $\Dcmenos(R)$. We prove that they are exactly the compactly generated $t$-structures on $\D(R)$ and describe them in terms of decreasing filtrations by supports of $\spec(R)$. A decreasing filtration by supports $\phi\colon \ZZ \to \spec(R)$ satisfies the weak Cousin condition if for any integer $i$, the set $\phi(i)$ contains all the immediate generalizations of each point in $\phi(i+1)$. If a compactly generated $t$-structure on $\D(R)$ restricts to a $t$-structure on $\Dc(R)$ then the corresponding filtration satisfies the weak Cousin condition. If $R$ has a pointwise dualizing complex the converse is true. If the ring $R$ has dualizing complex then these are exactly all the $t$-structures on $\Dbc(R)$.\end{abstract}

\maketitle

\tableofcontents

\section*{Introduction}

The concept of $t$-structure on a triangulated category arises as a categorical framework for Goresky-MacPherson's intersection homology. Through this construction Be\u{\i}linson, Bernstein, Deligne and Gabber extended intersection cohomology to the \emph{\'etale} context. A $t$-structure provides a homological functor with values in a certain abelian category contained in the original triangulated category denominated the heart of the $t$-structure. In Grothendiecks's terms, this study accounts for the study of extraordinary cohomology theories for \emph{discrete} coefficients. Intersection cohomology and its variants have been studied successfully with these methods over the last twenty years. Let us point out \cite{BBD} and \cite{GNM} and references therein.

On the side of \emph{continuous} coefficients the development has proceeded at a slower pace. Deligne was first to make contributions to the problem of constructing  $t$-structures on the bounded derived category of coherent sheaves on a Noetherian scheme under certain hypothesis, namely the existence of a dualizing complex and of global locally free resolutions. His work was not available until the expository e-print by Bezrukavnikov \cite{bez}. Later Kashiwara \cite{Kashiwara} constructs from a decreasing family of supports satisfying certain condition a $t$-structure on the bounded derived category of coherent sheaves on a complex manifold, which corresponds in the algebraic case to a \emph{smooth} separated scheme of finite type over $\CPX$. Also, Yekutieli and Zhang \cite{yz} considered the Grothendieck  dual $t$-structure of the canonical one on the derived category of finitely generated modules over a Noetherian ring with dualizing complex. This $t$-structure is called the \emph{Cohen-Macaulay $t$-structure} in the present paper and it is shown to exist in the whole unbounded derived category.

Deligne, Bezrukavnikov and Kashiwara built, on the derived category of bounded complexes with finitely generated homologies, a $t$-structure starting with a finite filtration by supports $X=Z_s\supsetneq Z_{s+1} \supsetneq \cdots \supsetneq Z_{i}\supsetneq \cdots \supsetneq  Z_{n-1} \supsetneq Z_n=\varnothing$ in the corresponding topological space $X$. In Bezrukavnikov's paper Noetherian induction is used. Kashiwara constructs the triangle of the corresponding $t$-structure using in an essential way that the complexes are bounded. The filtrations by supports used by both authors are finite and satisfy a condition that we call in this paper \emph{the weak Cousin condition} (for any integer $i$, the set $Z_i$ contains all the immediate generalizations of each point in $Z_{i+1}$). This name refers to a weakening and reformulation of the notion of codimension function introduced by Grothendieck, see \cite[V.7]{RD} and \S \ref{wccondition} below. An equivalent notion was used in \cite{bez} under the name \emph{comonotone} perversity.

In the aforementioned papers, the authors rely on finite step-by-step constructions and do not take into account the possibility allowed by infinite constructions if one considers the unbounded derived category. This approach makes sense after \cite{AJS2} where it is proved that for a collection of objects in the unbounded derived category of a Grothendieck category there is a $t$-structure whose aisle is generated by this set of objects. 

A logical next step is to try to classify $t$-structures on $\D(R)$ by filtrations of subsets of $\spec(R)$ extending the fact, proved in the key paper \cite{Nct}, that Bousfield localizations (the class of triangulated $t$-structures) are classified by subsets of $\spec(R)$. However, a counterexample by Neeman and further developments by the third author made clear that residue fields of prime ideals were not the right objects to use in order to achieve the classification of $t$-structures (see remark on page~\pageref{ex-Ne-Ma}). 

  Stanley in his preprint \cite{Stanley} treated the problem of studying $t$-structures on $\Dbc(R)$, the subcategory of $\D(R)$ of bounded complexes with finitely generated homologies, where $R$ is a commutative Noetherian ring. He showed that it is not possible to classify all $t$-structures on $\D(R)$ because the class of $t$-structures on $\D(\ZZ)$ is \emph{not} a set \cite[Corollary~8.4]{Stanley}. Then it is not possible to put $t$-structures on $\D(R)$ in correspondence with collections of subsets of $\spec(R)$.
   
On the positive side, Stanley showed that there is an order preserving bijection between filtrations of $\spec(R)$ by stable under specialization subsets and nullity classes in $\Dbc(R)$.
Theorem B in \cite{Stanley} states that the weak Cousin condition is a necessary condition over a filtration for the corresponding nullity class in $\Dbc(R)$ to be an aisle, and he conjectured that the converse is true. 
However, the proof of Theorem~B in \cite{Stanley} does not seem to be complete. 
We give here an alternate approach to Stanley's result encompassing the unbounded category and, in addition, we answer Stanley's conjecture in the affirmative. 

Specifically, the category $\Dc(R)$ is skeletally small so any $t$-structure on $\Dc(R)$ is the restriction of a $t$-structure on $\D(R)$ (see Lemma~\ref{skeletallysmall} and Proposition~\ref{nuevo}).
We look at $t$-structures on $\D(R)$ generated by complexes with finitely generated homologies and characterize those that restrict to $t$-structures on $\Dc(R)$ ---and in general to any of the subcategories $\Dc^\sharp(R)$ for any boundedness condition 
 $\sharp\in\{+,-,\bb, \text{``blank''}\}$. We obtain the classification of all $t$-structures on $\D(R)$ generated by complexes in $\Dcmenos(R)$ in terms of filtrations by supports of $\spec(R)$.
They are exactly all compactly generated $t$-structures (Theorem~\ref{classification-c-g}). We prove that the filtration associated to a compactly generated $t$-structure on $\D(R)$ that restricts to a $t$-structure on $\Dc(R)$ (or in any of the above subcategories $\Dc^\sharp (R)$) necessarily satisfies the weak Cousin condition (Corollary~\ref{Cousin}). So we give a proof of \cite[Theorem~B]{Stanley} by different means.

The compact objects in $\D(R)$ are the perfect complexes. Translated to our context, Stanley's question asks whether $t$-structures generated by perfect complexes on $\Dc(R)$ (or, in general, in $\Dc^\sharp(R)$) are those determined by filtrations satisfying the weak Cousin condition. 
In the last section of our paper we answer this question in the affirmative for $\Dc(R)$ when the ring $R$ possesses a pointwise dualizing complex. For $\Dbc(R)$ we get an affirmative answer when the ring $R$ has a dualizing complex, a mild hypothesis already present ---as we have recalled--- in previous works.
\\

Let us describe the contents of this paper. In the first section we start recalling the notations and definitions used in this paper. We introduce the notion of \emph{total pre-aisle} and show that any total pre-aisle of $\Dc^\sharp(R)$ is the restriction of an aisle of $\D(R)$, with $\sharp$ as before. 
We also study how total pre-aisles behave under change of rings.

A filtration by supports of $\spec(R)$ is a decreasing family $ \dots \supset \phi(i) \supset \phi(i+1) \supset \dots$ of stable under specialization subsets of $\spec(R)$. Each filtration by supports $\phi\colon \ZZ \to\CP(\spec(R))$ has an associated aisle $\CU_\phi$, generated by $\{R/\ip[-i] \,;\,i\in\ZZ \text{ and } \ip\in\phi(i)\}$. In \S \ref{aislesdeterminedbyfil} we consider aisles generated by suspensions of cyclic modules. They are precisely the aisles associated to filtrations by supports. We characterize in section \ref{compactas} the aisles of $\D(R)$ generated by complexes in $\Dcmenos (R)$ in terms of homological supports (Proposition~\ref{4.7}). They correspond to compactly generated aisles of $\D(R)$ (Theorem~\ref{4.9}). 

We study in \S\ref{wccondition} the filtrations by supports of $\spec(R)$ that provide aisles in $\Dc^\sharp (R)$.  We note that all $t$-structures on $\Dcmenos(R)$ and $\Dbc(R)$ are generated by perfect complexes. Theorem~\ref{5.5} is the first main result in this section. It says that the \emph{weak Cousin condition} is necessary on a filtration by supports on $\spec(R)$ in order to restrict the corresponding compactly generated $t$-structure on $\D(R)$ to a $t$-structure on  $\Dc(R)$.
This theorem corresponds to Stanley's Theorem 7.7\footnote{The weak Cousin condition here corresponds to being comonotone in \cite{Stanley}.}. We deal with the unbounded category $\Dc(R)$ and obtain from this the result\begin{footnote}{We should remark that the statement of Theorem~\ref{5.5} is a variation of \cite[Proposition~7.4]{Stanley}, the key ingredient in \cite[Theorem~B]{Stanley}.}\end{footnote} for $\Dbc(R)$. Next we describe the filtrations by supports of $\spec(R)$ satisfying the weak Cousin condition (Proposition~\ref{discreteness-of-filtration} and Corollary~\ref{discreteness-of-filtration2}).  As a consequence, if $\spec (R)$ is connected and $R$ has finite Krull dimension then filtrations that satisfy the weak Cousin condition are finite and exhaustive. Therefore, there is a bijection between $t$-structures on $\Dcmenos(R)$ and $\Dbc(R)$ (Corollary~\ref{bounded=right-bounded}). 

In \S\ref{descripciontruncados} we give a description of the truncation functors associated to a \emph{finite} filtration. Proposition~\ref{dos-pasos} shows that the aisle associated to a two-step filtration by supports that satisfies the  weak Cousin condition restricts to a $t$-structure on $\Dc(R)$, for any Noetherian ring $R$. With all these tools at hand, in the last section we prove the remaining main result (Theorem~\ref{Stanley-conjecture}). Our strategy of proof is related to the one used in \cite{Kashiwara}.
Namely, this Theorem asserts that if $R$ possesses a dualizing complex, then the aisles of $\Dbc(R)$ are exactly those induced by filtrations by supports satisfying the  \emph{weak Cousin condition}. As a consequence we obtain a bijection between aisles of $\Dc(R)$ and filtrations satisfying the weak Cousin condition under the weaker hypothesis that $R$ possesses \emph{pointwise} dualizing complex. The existence of a dualizing complex on $R$ is a very mild condition. It is satisfied by all rings of finite Krull dimension that are quotients of a Gorenstein ring. This is the case for all finitely generated algebras over a regular ring (\emph{e.g.} over a field or over $\ZZ$). 

\section{Notation and preliminaries on $t$-structures}

\noindent\textbf{Notation and Conventions.}
All rings in this paper will be commutative and Noetherian. Given a prime ideal $\ip\in\spec(R)$, $k(\ip)$ stands for the residue field of $\ip$ and $R_\ip$ for the localization of $R$ with respect to $\ip$. The support of an $R$-module $N$ is the set of prime ideals $\supp(N)=\{\ip\in\spec(R)\,/\, N_\ip=N\otimes R_\ip \neq 0\}$. For an ideal $\ia\subset R$ we denote by $\V(\ia) := \{\ip\in\spec(R)\,\,;\,\, \ia\subset\ip\}$.

As usual $\Mod(R)$  denotes the category of modules over a ring $R$,  $\CCC(R)$ the category of complexes of $R$-modules, $\K(R)$ its homotopy category and $\D(R)$  its derived category. For complexes we use the upward gradings. Let $n$ and $m$ be integers, as usual
$\D^{\sharp}(R)\subset \D(R)$ denotes the full subcategory of those complexes whose homologies satisfy one of the standard boundedness conditions $\sharp\in\{\,\leq n, \, < n, \,\geq n,\, > n,\, +, \,-\,\}$, $\Dcacb{n}{m}(R):=\Dgeq{n}(R)\cap \Dleq{m}(R)$ and $\Db(R)=\Dmenos{}(R)\cap \Dmas{}(R)$. 
Let $\Dc(R)\subset \D(R)$ be the full subcategory of complexes with finitely generated homologies. The symbol $\Dc^\sharp(R)$  stands for $\Dc(R)\cap \D^\sharp(R)$ for any superscript $\sharp$.
\\

\noindent\textbf{Basics on $t$-structures.}
Let $\T$ be any triangulated category. We will denote by $(-)[1]$ the translation auto-equivalence of $\T$ and its iterates by $(-)[n]$, with $n \in \ZZ$.

A $t$-structure on $\T$ in the sense of Be\u{\i}linson, Bernstein, Deligne and Gabber (\cite[D\'efinition~1.3.1]{BBD}) is a couple of full
subcategories $(\CU,\CF[1])$ such that  $\CU[1] \subset \CU$, $\CF[1] \supset \CF$, $\Hom_{\T}(Z,Y) = 0$ for $Z \in \CU$ and $Y \in \CF$ (\ie $\,\,\CU\subset {}^{\perp}\CF$, equivalently $\CF\subset \CU^{\perp}$), and for each $X \in \T$ there is a distinguished triangle
\begin{equation}
\label{t-triangle}
	\tleq{}{\CU}X \lto X \lto \tg{}{\CU}X \overset{+}{\lto}
\end{equation}
with $\tleq{}{\CU}X \in \CU$ and $\tg{}{\CU}X \in \CF$.
The subcategory $\CU$ is called the aisle of the $t$-structure, and $\CF$ is called the co-aisle. It follows from the definition that $\CU= {}^{\perp}\CF$ and $\CF= \CU^{\perp}$. It also follows that the inclusion $\CU \inc \T$ has a right adjoint $\tleq{}{\CU}$ called the left truncation functor and, dually, the inclusion $\CF \inc \T$ has a left adjoint functor $\tg{}{\CU}$,  the right truncation functor. In fact, $X\in \CU$ if and only if $\tg{}{\CU}X=0$, similarly $X\in \CF$ if and only if $\tleq{}{\CU}X=0$. The $t$-structure can be described just in terms of its aisle $\CU$. This fact justifies the notation for the truncation functors.

We call $(\Dleq{0}(R), \Dg{0}(R)[1])$ the \emph{canonical} $t$-structure on $\D(R)$. With $n\in\ZZ$, the $t$-structures $(\Dleq{n}(R), \Dg{n}(R)[1])$ obtained by translations of the canonical one are called \emph{standard} $t$-structures on $\D(R)$.
As usual $\tleq{n}{}=\tl{n+1}{}$ and  $\tgeq{n+1}{}=\tg{n}{}$ denote the left and right truncation functors associated to the $n$-th standard $t$-structure. For each $\Xdot\in\D(R)$,
  \[
   \tleq{0}{}\Xdot \lto \Xdot \lto \tg{0}{}\Xdot \overset{+}{\lto}
  \]
denotes the distinguished triangle determined by the canonical $t$-structure.

\begin{cosa}
\label{1.1}
A class $\CU\subset\T$ is a \emph{pre-aisle} of $\T$ if $\CU$ endowed with the class of  distinguished triangles in $\T$ with vertices in $\CU$ is a suspended category in the sense of Keller and Vossieck \cite{KVsus}, that is, $\CU$ is a class closed for extensions such that $\CU[1]\subset\CU$. A pre-aisle $\CU$ of $\T$ is \emph{total} if $\CU={{}^\perp(\CU^\perp)}$ (orthogonal always taken in $\T$).

If  $\CU\subset\T$ is a class of objects such that $\CU[1]\subset\CU$ then the class ${{}^\perp(\CU^\perp)}$ is a total pre-aisle of  $\T$, it is the smallest total pre-aisle of  $\T$ containing $\CU$. The property $\CU[1]\subset\CU$ implies $\CU^\perp\subset \CU^\perp[1]$. In general, given a class $\CY\subset\T$ such that $\CY\subset \CY[1]$ then the class ${}^\perp\CY\subset\T$ is a total pre-aisle of $\T$. As a consequence, if $\T'$ is a triangulated subcategory of $\T$ and $\CV$ is a total pre-aisle of $\T'$ then  $\CV = \CU\cap\T'$ where $\CU$ is a total pre-aisle of $\T$.

An aisle (or in general, a total pre-aisle) $\CU\subset \T$ is \emph{generated} by a set of objects $\CW \subset \T$ if $\CU$ is the smallest aisle (total pre-aisle) of $\T$ containing ${\CW}.$ We will say that a $t$-structure on $\T$ is \emph{generated} by the set of objects $\CW$ if so is its aisle.


\end{cosa}

\begin{cosa}
\label{aisles}
Among the triangulated categories we are concerned with, only the unbounded category $\D(R)$ has coproducts. Starting with a family of objects in $\D(R)$ it is possible to construct its associated $t$-structure on $\D(R)$ as follows.
Given a set of objects $\CM\subset \D(R),$ let ${\CM}[\NN]:=\{\Mdot[i]\,;\,\,\Mdot\in\CM\,\text{ and }\, i\geq 0\}\subset \D(R).$ By \cite[Proposition 3.2]{AJS2}  the smallest cocomplete pre-aisle containing the objects in $\CM$ is an aisle, that we will denoted here by $\aisle{\CM}$. Note that $\aisle{\CM}=^\perp({\CM}[\NN]^\perp)$. Furtheremore we can always assume that $\aisle{\CM}$ is generated by a single object because $\aisle{\CM}=\aisle{M}$ with $M:=\oplus_{M\in \CM} M$.

Let us fix a superscript $\sharp\in\{\text{``blank''}, \,+,\,-,\,\bb\}$. We know that any total pre-aisle of $\Dc^\sharp (R)$ is the restriction of a total pre-aisle of $\D(R)$; Proposition~\ref{nuevo} below provides a more useful result.

\begin{lem}
\label{skeletallysmall}
The categories $\Dc^\sharp (R)$ are skeletally small.
\end{lem}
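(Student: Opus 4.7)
My plan is to reduce the smallness of $\Dc^\sharp(R)$ to that of the category $\Modc(R)$ of finitely generated $R$-modules. Because $R$ is Noetherian, every finitely generated $R$-module is a quotient $R^n/K$ with $n\in\NN$ and $K\subseteq R^n$ a submodule, and such pairs $(n,K)$ form a set; hence $\Modc(R)$ is skeletally small. It then follows that the category $\CCC^\sharp(\Modc(R))$ of complexes of finitely generated modules with the corresponding boundedness is skeletally small, since a complex is the data of a $\ZZ$-indexed family of objects from a set (up to iso) together with morphisms between them (and $\Hom$ sets in $\Modc(R)$ are sets). Passing to the homotopy category $\K^\sharp(\Modc(R))$ or the derived category $\D^\sharp(\Modc(R))$ preserves this property, as these are obtained as quotients of $\CCC^\sharp(\Modc(R))$.

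The substance of the argument is then to show that every $X\in\Dc^\sharp(R)$ is isomorphic in $\D(R)$ to an object of $\CCC^\sharp(\Modc(R))$. For $\sharp\in\{\bb,-\}$ this is classical: any such $X$ admits a quasi-iso $P\to X$ with $P$ a bounded-above complex of finitely generated free $R$-modules, built inductively from the top by lifting cohomology and exploiting the fact that kernels of surjections between finitely generated modules remain finitely generated by Noetherianity. For $\sharp=+$, a symmetric inductive construction from the bottom upwards produces a quasi-iso $Y\to X$ with $Y\in\CCC^+(\Modc(R))$: at each stage one enlarges the next term $Y^{n+1}$ to cover both the kernel of the induced map $H^n(Y^{\leq n})\to H^n(X)$ and a lift of a set of generators of $H^{n+1}(X)$, each of which is finitely generated.

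The main obstacle is the unbounded case $\sharp=\text{``blank''}$, where the inductive resolution above cannot be run in both directions simultaneously without losing finite generation. I would get around this by using the telescope identification $X\simeq\mathrm{hocolim}_{n}\,\tau^{\leq n}X$ in $\D(R)$, noting that each truncation $\tau^{\leq n}X$ lies in $\Dc^-(R)$ and is thus, by the previous step, isomorphic to an object from a set. The telescope diagrams $\NN\to \Dc^-(R)$ arising in this way therefore form a set up to isomorphism, and taking a fixed model for the homotopy colimit produces a set-sized class of representatives to which every $X\in\Dc(R)$ is isomorphic. Finally $\Dc^+(R)$ inherits skeletal smallness as a full subcategory of $\Dc(R)$, completing all four cases.
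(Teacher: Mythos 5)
Your argument follows the paper's proof essentially step for step: a skeleton for $\Dc^-(R)$ via bounded-above complexes of finitely generated free modules, the telescope/homotopy-colimit identification $X \cong \mathrm{hocolim}_n\, \tau^{\leq n} X$ to reduce the unbounded case to the bounded-above one, and the remaining boundedness conditions treated as full subcategories of the skeletally small $\Dc(R)$. The separate bottom-up construction you sketch for $\sharp=+$ is not needed (and is the least carefully justified step), since, as you yourself note at the end, that case already follows from the unbounded one.
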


\begin{proof}
Let us treat first the case $\sharp = -$. By using step by step free resolutions and taking into account that $R$ is noetherian, we see that every object in $\Dc^- (R)$ is isomorphic to a bounded above complex of finitely generated free modules and they form a set $\CW$ that contains a representative for every complex in $\Dc^- (R)$.

The rest of the cases will be settled if we show that $\Dc(R)$ is skeletally small \ie\/, the case $\sharp = \text{``blank''}$. Let $\Xdot \in \Dc(R)$. Note that 
\[\Xdot \iso \dirlim{n \in \NN}\tleq{n}{}\Xdot.\]
Now every $\tleq{n}{}\Xdot$ is quasi-isomorphic to an object $\Wdot_n$ in $\CW$. On the other hand \cite[Proof of Lemma 3.5]{AJS1} there is a quasi-isomorphism
\[
\dirlim{n \in \NN}\tleq{n}{}\Xdot \iso \holim{n \in \NN}\tleq{n}{}\Xdot.
\]
Summing up $\Xdot$ is isomorphic to the cone of an endomorphism of $\bigoplus_{n \in \NN}\Wdot_n$. But it is clear that the collection of endomorphisms of countable coproducts of objects in $\CW$ form a set $\CM$ and this set contains a representative for every complex in $\Dc(R)$.
\end{proof}

\begin{prop}
\label{nuevo}
Let $\T^\sharp$ be any of the categories $\Dc^\sharp (R)$. Let $\CV$ be a total pre-aisle of $\T^\sharp$, and let $\CE$ be its right orthogonal in $\T^\sharp$. Then
\begin{enumerate}
\item $\CU={}^\perp(\CV^\perp)$ is an aisle of $\D(R)$ (we are using the symbol ${}^\perp$ for the orthogonal in $\D(R)$);
\item  the corresponding $t$-structure $(\CU,\CF[1])$ on $\D(R)$ satisfies that $\CV=\CU\cap \T^\sharp$ and $\CE=\CF\cap \T^\sharp$;
\item if $(\CV,\CE[1])$ is a $t$-structure on $\T^\sharp$ then for any $\Xdot\in \T^\sharp$ the distinguished triangle in $\T^\sharp$ defined by the $t$-structure $(\CV,\CE[1])$
  \[
   N\lto X
   \lto B \stackrel{+}{\lto}
  \]
is the distinguished triangle in $\D(R)$ associated to $(\CU,\CF[1])$.
\end{enumerate}
\end{prop}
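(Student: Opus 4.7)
The plan is to leverage the two ingredients already assembled: skeletal smallness of $\T^\sharp$ (Lemma~\ref{skeletallysmall}) and the generation-of-aisles theorem of \cite{AJS2} recalled in \ref{aisles}. For part (1), I will choose a set $\CV_0 \subset \CV$ containing a representative of each isomorphism class of object in $\CV$; this is possible by skeletal smallness. Since $\CV$ is a pre-aisle, $\CV[1] \subset \CV$, and consequently $\CV_0[\NN]$ still represents every object of $\CV$ up to isomorphism. Because $\Hom$-vanishing is invariant under isomorphism, it follows that
\[
\CV_0[\NN]^\perp = \CV^\perp \quad \text{in } \D(R).
\]
Applying \cite[Proposition~3.2]{AJS2} to $\CV_0$ then produces an aisle $\aisle{\CV_0} = {}^\perp(\CV_0[\NN]^\perp)$ of $\D(R)$, and the displayed equality identifies it with $\CU = {}^\perp(\CV^\perp)$.

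For part (2), write $(\CU, \CF[1])$ for the resulting $t$-structure, so that $\CF = \CU^\perp$. A short orthogonality argument based on $\CV \subset \CU = {}^\perp(\CV^\perp)$ shows $\CF = \CU^\perp = \CV^\perp$ in $\D(R)$. Since $\T^\sharp$ is a full triangulated subcategory of $\D(R)$, orthogonals taken inside $\T^\sharp$ coincide with orthogonals taken in $\D(R)$ and intersected with $\T^\sharp$; this yields $\CE = \CV^\perp \cap \T^\sharp = \CF \cap \T^\sharp$ at once. The inclusion $\CV \subset \CU \cap \T^\sharp$ is built into the definition of $\CU$. For the converse $\CU \cap \T^\sharp \subset \CV$, I will take $X \in \CU \cap \T^\sharp$ and observe that for every $Y \in \CE \subset \CF$ the $t$-structure axiom forces $\Hom_{\D(R)}(X, Y) = 0$; hence $X$ lies in the left orthogonal of $\CE$ inside $\T^\sharp$, which equals $\CV$ because $\CV$ is a total pre-aisle of $\T^\sharp$.

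Part (3) should then fall out of the uniqueness of $t$-structure truncations: given $X \in \T^\sharp$ and the triangle $N \lto X \lto B \stackrel{+}{\lto}$ produced by $(\CV, \CE[1])$, part (2) places $N \in \CV \subset \CU$ and $B \in \CE \subset \CF$, so this triangle satisfies the defining property of the truncation triangle of $X$ with respect to $(\CU, \CF[1])$ and must therefore coincide with it (up to unique isomorphism). I do not anticipate a serious obstacle: the only slightly delicate point is the identity $\CV_0[\NN]^\perp = \CV^\perp$, which hinges on $\CV$ being closed under positive shifts so that $\CV_0$ already suffices to generate the same aisle as the whole of $\CV$; the remaining steps are formal manipulations with orthogonals.
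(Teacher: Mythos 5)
Your proposal takes essentially the same route as the paper: both use Lemma~\ref{skeletallysmall} to extract a set of representatives of $\CV$, invoke the construction of \cite[Proposition~3.2]{AJS2} recalled in \ref{aisles} together with the observation that closure of $\CV$ under positive shifts makes the orthogonal of those representatives equal $\CV^\perp$, and then conclude (2) and (3) by the same formal orthogonality manipulations ($\CU^\perp = \CV^\perp$, $\CU\cap\T^\sharp \subset {}^\perp\CE\cap\T^\sharp = \CV$ via totality of $\CV$, and uniqueness of truncation triangles). No meaningful divergence from the paper's argument.
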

\begin{proof} By the previous lemma the category $\T^\sharp$ is skeletally small, then we can choose a set of objects $\CW\subset\CV$ such that for each object in $\CV$ there is an isomorpic object in $\CW$. Then $\CE$ is the right orthogonal of $\CW$ in $\T^\sharp$. The class $\CU={}^\perp(\CW^\perp)$ is the aisle of $\D(R)$ generated by $\CW$, and trivially $\CV\subset \CU$. Let $\CF:=\CU^\perp$, that is $\CF=\CW^\perp$. In particular $\CE=\CF\cap \T^\sharp\subset\CF$ and therefore $\CU\cap \T^\sharp={}^\perp\CF\cap \T^\sharp\subset {}^\perp\CE\cap \T^\sharp= \CV$.  Whence $\CU\cap \T^\sharp=\CV$.
The last assertion in the proposition is obvious because $\CV\subset\CU$ and $\CE\subset\CF$.
\end{proof}

\end{cosa}

\begin{cosa}
\label{compact}
Let $\T$ be a triangulated category with coproducts. An object $E$ of $\T$ is called \emph{compact} if the functor $\Hom_{\T}(E,-)$ commutes with arbitrary coproducts. By Rickard's criterion the compact objects of $\D(R)$ are the perfect complexes, {\ie} those complexes isomorphic to bounded complexes of finite-type projective modules  (see \cite[Proposition 6.3 and its proof]{Ric})\begin{footnote}{In \cite[Lemma~4.3]{AJS2} the reader can find a simpler proof of this fact using results of Neeman \cite[Lemma~2.2]{Ntty}.}\end{footnote}.

We will say that an aisle (or in general, a total pre-aisle) $\CU\subset \T$ is \emph{compactly generated} if there is a set $\CE \subset \CU$ of compact objects in $\T$ such that $\CE$ generates  $\CU$. We will say that a $t$-structure is \emph{compactly generated} if its aisle is compactly generated. 

\begin{ex}
$\Dleq{0}(R)$ is a compactly generated aisle of $\D(R)$, it is generated by the stalk complex $R=R[0]$.
\end{ex}

\end{cosa}

\noindent\textbf{Compactly generated Bousfield localizations on $\D(R)$}.
\label{1.4}
In general the aisle of a $t$-structure on a triangulated category $\T$ is not a triangulated subcategory. In fact, an aisle $\CU$ of $\T$, 
$\CU$ is a triangulated subcategory of $\T$ if and only if  $\CU[-1]\subset\CU$, equivalently  the left truncation functor $\tleq{}{\CU}$ (equivalently, right truncation functor $\tg{}{\CU}$) is a $\Delta$-functor.  

A $t$-structure whose aisle $\CU$ is a triangulated subcategory of the ambient triangulated category $\T$ is called a \emph{Bousfield localization} of $\T$, and the class $\CU$ is called a localizing subcategory of $\T$. The objects in  $\CU$ are called acyclic and the functor $\tleq{}{\CU}$ is the associated acyclization functor. The objects in $\CU^{\perp}$ are called local objects and the functor $\tg{}{\CU}$ is called the Bousfield localization functor. For a reference on Bousfield localizations in this context, see \cite{AJS1}. For its classification see \cite{Nct} for rings and \cite{AJS3} for schemes and formal schemes.


\begin{cosa}
\label{util0}
Proposition~5.7 in \cite{AJS3} shows that compactly generated  Bousfield localizations on $\D(R)$ correspond to stable under specialization subsets of $\spec(R)$. Let us recall in our context some facts about compactly generated  Bousfield localizations from \cite{AJS3} that were obtained following the path initiated in \cite{AJL}.
 
%
%
%
A subset $Z \subset \spec(R)$ is stable under specialization if for any couple of prime ideals $\ip\subset \iq$ with $\ip\in Z$, it holds that $\iq\in Z$, in other words, it is the union of a directed system of closed subsets of $\spec(R)$. From now on, to abbreviate, we will refer to this kind of subsets as \emph{sp-subsets}. As usual, for each $R$-module $N$ let us denote by $\Gamma_Z(N)$  the biggest submodule of $N$ whose support is contained in $Z$.  The functor $\Gamma_Z \colon \Mod(R)\to \Mod(R)$ is an idempotent kernel functor, thus it is determined by its Gabriel topology\begin{footnote}{See  \cite[Ch. VI, \S 5]{St}.}\end{footnote} of ideals: the set of ideals $\ia\subset R$ such that $\supp(R/\ia)=\V(\ia)\subset Z$. Namely,
  \[
   \Gamma_Z :=
   \dirlim{\V(\ia)\subset Z} \Hom_{R}(R/\ia, -).
  \]
A basis of ideals of the Gabriel topology suffices to compute $\Gamma_Z$. Let $Q_Z \colon \Mod(R) \to \Mod(R)$ be the (abelian) localization functor associated to $\Gamma_Z.$ The canonical transformations $\Gamma_Z\to \id$  and  $\id \to Q_Z$  induce isomorphisms $ \Gamma_Z\Gamma_Z\cong \Gamma_Z$ and $Q_Z\cong Q_Z Q_Z$.
 
   Using K-injective resolutions these relations can be extended to the derived category $\D(R)$. In such a way that 
  \begin{equation}
  \label{abovetriangle}
   \R\Gamma_Z E \xto{\rho(E)}  
   E \lto \R{}Q_Z E \overset{+}\lto     
  \end{equation}
is the Bousfield localization triangle whose localization functor is $\R Q_Z$  and its acyclization functor $\R\Gamma_Z$ (see \cite[\S 2 and the example in page 16]{AJS3} for the results mentioned in this paragraph).
Moreover, for all $E \in \D(R)$ the natural map $E \otimes^{\LL}_{R} \R\Gamma_Z R \to  E$ factors through $\R\Gamma_Z E$, providing a canonical isomorphism
  \(
    E \otimes^{\LL}_{R} \R\Gamma_Z R \iso
   \R\Gamma_Z E
  \)
\cite[\S 2.3]{AJS3} in such a way that the above triangle (\ref{abovetriangle}) is canonically isomorphic to the triangle
  \[
   E \otimes^{\LL}_{R} \R\Gamma_Z R
 \xto{E \otimes^{\LL}_{R} \rho(R)}  
   E \lto E \otimes^{\LL}_{R}\R{}Q_Z R \overset{+}\lto     
  \]   
\cite[\S 2.1]{AJS3}. These properties are summarized in 
\cite[\S 5, p. 603]{AJS3} by saying that the corresponding Bousfield localization is \emph{$\otimes$-compatible}. In \lc{} it is proved that these are exactly the Bousfield localizations on $\D(R)$ whose localization functors commute with coproducts, {\ie} the \emph{smashing} localizing subcategories of $\D(R)$ ({\cfr} \cite[\S 3]{Nct}).
\end{cosa}

\begin{cosa}
\label{util}
As a direct consequence of the above results given  $Z_1, \,Z_2 \subset \spec(R)$ two sp-subsets, then:
  \begin{enumerate}
     \item The canonical transformation $\R\Gamma_{Z_1\cap Z_2}
           \to \R\Gamma_{Z_2}$ induces a natural isomorphism
           $\R\Gamma_{Z_1\cap Z_2} \to \R\Gamma_{Z_1}
           \R\Gamma_{Z_2}$.
     \item The canonical map of functors $\R Q_{Z_1} \R\Gamma_{Z_2}
           \to \R Q_{Z_1}$ induces a natural isomorphism
           $\R Q_{Z_1}\R\Gamma_{Z_2}\iso\R\Gamma_{Z_2}\R Q_{Z_1}$. Furthermore $\R Q_{Z_1}\R Q_{Z_2}$ and $\R Q_{Z_2}\R Q_{Z_1}$ are canonically isomorphic, and they are isomorphic to $\R Q_{Z_2\cup Z_1}$.
  \end{enumerate}
\end{cosa}

\begin{thm}
\label{suph}
Let $Z\subset \spec(R)$ be a sp-subset and $\Fdot\in\D(R)$. The canonical map $\R\Gamma_{Z} F  \to F$ is an isomorphism if and only if $\supp(\h^j(F)) \subset Z$, for every $j\in\ZZ$.
\end{thm}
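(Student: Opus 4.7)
My plan is to prove the two implications separately. For the forward direction, I would use the identification $\R\Gamma_Z F \iso \dirlim{\V(\ia)\subset Z} \R\Hom_R(R/\ia, F)$, which holds because $R$ is Noetherian (each $R/\ia$ is finitely presented, so $\R\Hom_R(R/\ia,-)$ commutes with filtered colimits) and filtered colimits are exact, so that they pass through the computation of the derived functor via K-injective resolutions. Each $\ext^j_R(R/\ia, F)$ is naturally an $R/\ia$-module and is therefore supported in $\V(\ia)\subset Z$; a filtered colimit of modules supported in $Z$ remains supported in $Z$. Hence $\supp H^j(\R\Gamma_Z F)\subset Z$ for every $j$, and the hypothesis that $\R\Gamma_Z F\to F$ is an isomorphism transfers this conclusion to the $H^j F$.

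For the backward direction, assume $\supp H^j F\subset Z$ for every $j$. The core case is that of a module $M$ with $\supp M\subset Z$: I take the minimal injective resolution $M\to I^\bullet$, and combine Matlis's decomposition $I^n=\bigoplus_\ip E(R/\ip)^{(\mu^n(\ip,M))}$ with Bass's formula $\mu^n(\ip,M)=\dim_{k(\ip)}\ext^n_{R_\ip}(k(\ip),M_\ip)$. A nonzero $\mu^n(\ip,M)$ forces $\ip\in\supp M\subset Z$; since $Z$ is stable under specialization and $\supp E(R/\ip)=\V(\ip)\subset Z$, every summand appearing in $I^\bullet$ is $Z$-torsion. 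Hence $\Gamma_Z I^\bullet=I^\bullet$ termwise and $\R\Gamma_Z M\iso M$ in $\D(R)$. Next, since the localization associated to $Z$ is smashing (see \S\ref{util0}), the functor $\R{}Q_Z$ commutes with coproducts and hence with homotopy colimits; writing $F\iso \dirlim{n}\tleq{n}{}F$ reduces the problem to $F\in\Dmenos{}(R)$ with $Z$-torsion cohomology.

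For such a bounded above $F$, I consider the localization triangle $\R\Gamma_Z F\to F\to \R{}Q_Z F\xto{+1}$. The forward direction says $\supp H^j(\R\Gamma_Z F)\subset Z$, and the long exact sequence together with the hypothesis on $F$ then forces each $H^q(\R{}Q_Z F)$ to be $Z$-torsion; the orthogonality relations in \S\ref{util} give $\R\Gamma_Z \R{}Q_Z F=0$. Applying the module case pointwise to the $Z$-torsion modules $H^q(\R{}Q_Z F)$, the hypercohomology spectral sequence
$$E_2^{p,q}=R^p\Gamma_Z\, H^q(\R{}Q_Z F)\Longrightarrow H^{p+q}(\R\Gamma_Z \R{}Q_Z F)=0$$
is concentrated in the single column $p=0$. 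The resulting degeneration yields $H^q(\R{}Q_Z F)=0$ for every $q$, so $\R{}Q_Z F=0$ and $\R\Gamma_Z F\to F$ is an isomorphism.

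The main obstacle I anticipate is controlling convergence of the hypercohomology spectral sequence in the last step when $F$ is unbounded below; the concentration of the $E_2$-page in a single column should render convergence essentially automatic, since there are no extension problems along the diagonals of the abutment. Should this point require more care, I would instead construct a K-injective resolution of $F$ consisting of $Z$-torsion injectives by gluing the minimal injective resolutions of the bounded truncations $\tgeq{-m}{}F$ via the standard homotopy-limit presentation $F\iso \holim{m}\tgeq{-m}{}F$ of a bounded above complex, using the module case termwise.
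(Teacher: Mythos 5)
The paper's ``proof'' is merely the citation of \cite[Theorem~5.6]{AJS3}, so there is no in-house argument to compare yours against; I will just assess your proposal on its own terms. Your forward implication is correct (although the cleanest justification is that on a K-injective resolution $I$ of $F$ one has $\Gamma_Z I=\dirlim{\alpha}\Hom^\cdot_R(R/\ia_\alpha,I)$ \emph{termwise} in $\CCC(R)$, and filtered colimits of complexes are exact; the finite presentation of $R/\ia$ is not really what is at stake). The module case via Matlis decomposition and Bass numbers is also correct, and so is the reduction to $\Dmenos{}(R)$. The last step, however, contains a genuine gap which you anticipate but do not resolve. The spectral sequence $E_2^{p,q}=R^p\Gamma_Z\,\h^q(\R Q_Z F)\Rightarrow\h^{p+q}(\R\Gamma_Z\R Q_Z F)$ need not converge when $\R Q_Z F$ is unbounded: $\R\Gamma_Z$ can have infinite cohomological dimension (already $\R\Gamma_Z R$ can be unbounded when $R$ has infinite Krull dimension), so this is a half-plane spectral sequence whose abutment filtration need not be Hausdorff, and concentration of $E_2$ in a single column does not by itself repair that. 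The alternative you sketch also breaks down: $\holim{m}I_m$ is built from \emph{products}, and a product of $Z$-torsion injectives need not be $Z$-torsion (for instance $\prod_{n\geq 1}\ZZ/p^n\ZZ$ is not $p$-torsion), so the Spaltenstein K-injective model of $F$ obtained this way is not termwise $Z$-torsion and $\Gamma_Z$ does not act on it as the identity.

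The gap is repairable, and the repair makes the reduction to $\Dmenos{}(R)$ unnecessary. Keep your setup: $G:=\R Q_Z F$ has $Z$-torsion cohomology (forward implication plus the long exact sequence) and $\R\Gamma_Z G=0$. If $G\neq 0$, choose $\ip$ \emph{minimal} in $\bigcup_{j\in\ZZ}\supp\h^j(G)\subset Z$. By minimality, $\supp\h^q(G_\ip)\subset\{\im\}$ for all $q$, where $\im:=\ip R_\ip$. Since $\V(\ip)\subset Z$, \S\ref{util}(1) gives $\R\Gamma_{\V(\ip)}G\cong\R\Gamma_{\V(\ip)}\R\Gamma_Z G=0$, and localizing at $\ip$ yields $\R\Gamma_{\im}(G_\ip)=0$. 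But $R_\ip$ is local Noetherian, so $\R\Gamma_{\im}$ has \emph{finite} cohomological dimension (bounded by $\dim R_\ip<\infty$); the spectral sequence $E_2^{p,q}=R^p\Gamma_{\im}\bigl(\h^q(G_\ip)\bigr)\Rightarrow\h^{p+q}(\R\Gamma_{\im}G_\ip)=0$ therefore has only finitely many nonzero columns and converges unconditionally. Your module case, applied over $R_\ip$ to the $\im$-torsion modules $\h^q(G_\ip)$, kills all columns $p>0$, and degeneration gives $\h^q(G_\ip)=0$ for every $q$ --- contradicting the choice of $\ip$. Hence $G=0$ and $\R\Gamma_Z F\iso F$. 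So your spectral-sequence idea is essentially the right one; localizing at a minimal prime of the support first places you in a setting where its convergence is automatic.
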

\begin{proof} It is \cite[Theorem~5.6]{AJS3} translated into the present context.
\end{proof}

\begin{cor}
\label{ejemplo}
Let $Z \subset \spec(R)$ be a sp-subset and $i\in\ZZ$. The pre-aisle 
\[
\cu{i}{Z} :=\{\Ndot \in \Dleq{i}{}(R) \, ; \, \supp(\h^j(\Ndot))\subset Z \text{ for all } j\leq i \}
\]
 is an aisle of $\D(R)$ with $\tleq{i}{} \R\Gamma_{Z}$ as its associated left truncation functor.
\end{cor}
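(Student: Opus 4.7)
The plan is to produce, for every $X\in\D(R)$, a triangle
\[
\tleq{i}{}\R\Gamma_Z X \lto X \lto C \overset{+}{\lto}
\]
with left vertex in $\cu{i}{Z}$ and right vertex in $\cu{i}{Z}^{\perp}$. The uniqueness of such a triangle will then simultaneously show that $\cu{i}{Z}$ is an aisle and that the left truncation functor is $\tleq{i}{}\R\Gamma_Z$. First I would check that $\cu{i}{Z}$ is a pre-aisle: stability under $[1]$ follows because $\Dleq{i}(R)$ is stable under $[1]$ and the condition on homologies only gets easier; closure under extensions is immediate from $\Dleq{i}(R)$ being extension-closed together with the fact that $\supp(\h^j(-))\subset Z$ is extension-closed via the long exact sequence.

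To build the triangle I would apply the octahedral axiom to the composition
\[
\tleq{i}{}\R\Gamma_Z X \lto \R\Gamma_Z X \lto X,
\]
where the first map is canonical truncation and the second comes from the Bousfield localization triangle $\R\Gamma_Z X\to X\to \R Q_Z X\overset{+}{\to}$ of \ref{1.4}. The octahedron produces the desired triangle together with a second triangle
\[
\tg{i}{}\R\Gamma_Z X \lto C \lto \R Q_Z X \overset{+}{\lto}.
\]

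That $\tleq{i}{}\R\Gamma_Z X\in\cu{i}{Z}$ is immediate: it sits in $\Dleq{i}(R)$ by construction, and by Theorem~\ref{suph} the complex $\R\Gamma_Z X$ has all homologies supported in $Z$, a property preserved by $\tleq{i}{}$. The only real point is to verify $C\in\cu{i}{Z}^{\perp}$, which I would do by applying $\Hom(N,-)$ with $N\in\cu{i}{Z}$ to the second triangle above. The left term vanishes by the orthogonality in the standard $t$-structure, since $N\in\Dleq{i}(R)$ and $\tg{i}{}\R\Gamma_Z X\in\Dg{i}(R)$. For the right term, observe that an object $N\in\cu{i}{Z}$ has all its homologies supported in $Z$ (those above degree $i$ vanish), so by Theorem~\ref{suph} the canonical map $\R\Gamma_Z N\to N$ is an isomorphism, i.e.\ $N$ is acyclic for the Bousfield localization associated with $Z$; hence $\Hom(N,\R Q_Z X)=0$ since $\R Q_Z X$ is a local object.

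The main obstacle is purely conceptual and handled by the octahedral construction: one must combine the standard truncation (which controls the support of the \emph{upper} part) with the localization triangle (which controls the support of the homologies). Once both ingredients of the right-hand vertex $C$ are shown to kill $\Hom(N,-)$ for $N\in\cu{i}{Z}$, the statement follows at once, and the identification of $\tleq{i}{}\R\Gamma_Z$ as the left truncation functor is automatic from the uniqueness of the $t$-triangle.
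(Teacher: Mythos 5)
Your proof is correct, and it goes by a genuinely different route than the paper's.

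The paper's proof of this corollary is very terse: by Theorem~\ref{suph} it characterizes $\cu{i}{Z}$ as $\{N : \tleq{i}{}\R\Gamma_Z N \cong N\}$, and then appeals (implicitly, Keller--Vossieck-style) to the fact that $\tleq{i}{}\R\Gamma_Z$ is a composite of right adjoints whose essential image is exactly $\cu{i}{Z}$, so the inclusion $\cu{i}{Z}\hookrightarrow\D(R)$ has a right adjoint and $\cu{i}{Z}$ is therefore an aisle. You instead bypass the abstract criterion and produce the $t$-triangle explicitly: apply the octahedron to $\tleq{i}{}\R\Gamma_Z X\to\R\Gamma_Z X\to X$, check the left vertex lies in $\cu{i}{Z}$, and check that the cone $C$ lies in $\cu{i}{Z}^\perp$ by running $\Hom(N,-)$ ($N\in\cu{i}{Z}$) over the second octahedral triangle $\tg{i}{}\R\Gamma_Z X\to C\to\R Q_Z X\to$, using the standard-$t$-structure orthogonality on the left term and the acyclic/local orthogonality (via Theorem~\ref{suph}) on the right term. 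This is essentially the explicit unpacking of the paper's abstract claim, and it is precisely the computation the paper performs later in \S\ref{parausar} when it needs the truncation functors concretely. Your version is more self-contained; the paper's is shorter but leans on the reader knowing the ``pre-aisle whose inclusion has a right adjoint is an aisle'' principle. Both are valid and the verifications you give are complete.
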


\begin{proof} By Theorem~\ref{suph}, $\cu{i}{Z}$ is the class of objects $\Ndot\in\D(R)$ such that $\tleq{i}{} \R\Gamma_{Z}\Ndot\cong \Ndot$. Then $\cu{i}{Z}$ is an aisle of $\D(R)$ and the right adjoint functor of the inclusion $\cu{i}{Z}\inc \D(R)$ is $\tleq{i}{} \R\Gamma_{Z}$.
\end{proof}

\noindent\textbf{Total pre-aisles and base change.}
Let $f\colon R\to A$ be a homomorphism of rings. The exact forgetful functor $f_*\colon\Mod(A) \to \Mod(R)$ has adjoints on both sides. The base change functor $f^*=A\otimes_R - $ is its left adjoint, and its right adjoint is $\Hom_R(A ,-).$ The derived functors 
  \[
   \LL f^*\colon \D (R) \to \D (A), \qquad
   \x{f} := \rhomdot_R(A ,-)\colon \D (R) \to \D(A),
  \]
defined using K-projective and K-injective resolutions in $\K(R)$ (see \cite[Theorem 2.14]{BN} and \cite{Sp}), satisfy the corresponding natural adjunction formulas:
  \[
   \Hom_{\D (R)}(M, f_*N)\cong
   \Hom_{\D (A)}(\LL f^* M, N),
  \]
  \[
   \Hom_{\D (R)}(f_*N, M)\cong
   \Hom_{\D (A)}(N, \R\Hom_R^\cdot(A , M) ),
  \]
for all $M \in \D (R),$ and $N \in \D (A)$. As it is usual, if there is no ambiguity, we will write $N=f_*N$ for every $N \in \D (A).$
The functor $f_*$  transforms acyclic complexes into  acyclic complexes, hence $\Hom_R^\cdot(A , -)\colon \K (R) \to \K (A)$ transforms a $K$-injective complex of (injective) $R$-modules into a $K$-injective complex of (injective) $A$-modules. As a consequence if  $g\colon A\to B$ is another morphism of rings then $\x{(g f)}=\x{f}\x{g}.$

Let $f\colon R \to A$ be a homomorphism of rings and let $\CW\subset \D(R)$ be a  class  of objects.  We denote by $\CW^{[f_*]}=\{\Ndot\in\D(A)\,;\, f_*\Ndot\in\CW\}$ the pre-image of $\CW$ through $f_*$, and by $\LL f^*\CW$ the image of $\CW$ through $\LL f^*$. We use the same orthogonal symbols for classes in $\D(R)$ and $\D(A)$ in each case the ambient category will tell us where the orthogonals are taken.

The following is a slightly more general reformulation of the statement in \cite[Corollary 5.2]{AJS2}  that is useful in the present context.

\begin{prop}
\label{rigid}
Let $\CU$ be a cocomplete pre-aisle of $\D(R),$ then $X \otimes^{\LL}_R M \in \CU$ for all $X \in \CU$ and $M \in \Dleq{0}{}(R).$ 
\end{prop}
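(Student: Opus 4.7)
The plan is to fix $X \in \CU$ and to consider the class
\[
\CV_X := \{M \in \D(R) \,;\, X \otimes^{\LL}_R M \in \CU\}.
\]
I would verify that $\CV_X$ is a cocomplete pre-aisle of $\D(R)$ containing the stalk complex $R$. Granted these two facts, the example stated just before the paragraph on \emph{Compactly generated Bousfield localizations} (namely that $\aisle{R}=\Dleq{0}{}(R)$), together with the minimality of $\aisle{R}$ among cocomplete pre-aisles containing $R$, immediately yields $\Dleq{0}{}(R) \subset \CV_X$, which is precisely the statement of the proposition.

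To verify that $\CV_X$ is a cocomplete pre-aisle, the idea is to transport the three defining properties across the triangulated functor $X \otimes^{\LL}_R -\colon \D(R) \to \D(R)$. Stability under the shift $[1]$ follows from $\CU[1]\subset \CU$ and the natural isomorphism $X\otimes^{\LL}_R M[1]\cong (X\otimes^{\LL}_R M)[1]$. Stability under extensions follows by applying $X\otimes^{\LL}_R -$ to a distinguished triangle $A\to B\to C\xrightarrow{+}$ with $A,C\in \CV_X$ and using that pre-aisles are closed under extensions, so the middle term of the resulting triangle with outer vertices in $\CU$ also lies in $\CU$. Finally, stability under coproducts relies on the fact that derived tensor commutes with coproducts, combined with the cocompleteness of $\CU$. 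Membership $R \in \CV_X$ is immediate from $X\otimes^{\LL}_R R \cong X\in \CU$.

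There is no serious obstacle in this argument; the only substantive input is the recognition that the preimage of a cocomplete pre-aisle under a triangulated, coproduct-preserving endofunctor is again a cocomplete pre-aisle, together with the identification $\aisle{R}=\Dleq{0}{}(R)$ recalled earlier.
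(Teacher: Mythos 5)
Your proof is correct and is essentially identical to the paper's argument: both fix $X\in\CU$, form the class $\CV_X=\{M\,;\,X\otimes^{\LL}_R M\in\CU\}$, observe it is a cocomplete pre-aisle containing $R[0]$, and conclude $\Dleq{0}{}(R)\subset\CV_X$. You merely spell out the verification that $\CV_X$ inherits the three defining properties, which the paper leaves implicit.
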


\begin{proof}
The class $\CV= \{M \in \D(R) \,\,;\,\, X\otimes^{\LL}_R M \in \CU\}$ is a cocomplete pre-aisle of $\D(R)$ that contains $R=R[0],$ therefore $\Dleq{0}(R) \subset \CV.$
\end{proof}

\begin{cor}
\label{resumen}
Let $f\colon R \to A$ be a morphism of rings and let $\CU$ be a cocomplete pre-aisle of $\D(R)$. Then:
 \begin{enumerate}
   \item $f_*\LL f^* \CU \subset \CU$;
   \item $(\LL f^*\CU)^\perp{}=(\CU^\perp{})^{[f_*]}$;
   \item $\x{f}(\CU^\perp)\subset (\LL f^*\CU)^\perp{}$.
 \end{enumerate}
If furthermore $\CU$ is a total pre-aisle and $\CW:= {}^\perp{}((\LL f^*\CU)^\perp{})$, then $f_*\CW \subset \CU$.
\end{cor}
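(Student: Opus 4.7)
The plan is to derive each of the three clauses from a single application of one of the available adjunctions, and then combine them to settle the final assertion. Nothing in the statement looks genuinely difficult; the only point that requires any thought is identifying which adjunction to invoke at each stage and checking that we really are in the hypotheses of Proposition~\ref{rigid}.

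First I would settle (1). Given $X\in\CU$, by definition $\LL f^{*}X = A\otimes^{\LL}_{R}X$, and the forgetful functor $f_{*}$ only changes the ring of scalars, so $f_{*}\LL f^{*}X\cong X\otimes^{\LL}_{R}A$ in $\D(R)$. Since $A$, viewed as an $R$-module complex concentrated in degree $0$, lies in $\Dleq{0}(R)$, Proposition~\ref{rigid} yields $X\otimes^{\LL}_{R}A\in\CU$, proving (1). For (2) the adjunction $\LL f^{*}\dashv f_{*}$ gives, for every $N\in\D(A)$ and $X\in\CU$, a natural isomorphism
\[
\Hom_{\D(A)}(\LL f^{*}X,N)\cong \Hom_{\D(R)}(X,f_{*}N),
\]
so $N$ annihilates every object of $\LL f^{*}\CU$ on the right if and only if $f_{*}N$ annihilates every object of $\CU$ on the right, which is exactly the asserted equality $(\LL f^{*}\CU)^{\perp}=(\CU^{\perp})^{[f_{*}]}$.

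For (3), the idea is to combine (1) with the other adjunction $f_{*}\dashv \x{f}$. Pick $Y\in\CU^{\perp}$ and $X\in\CU$; then
\[
\Hom_{\D(A)}(\LL f^{*}X,\x{f}Y)\cong \Hom_{\D(R)}(f_{*}\LL f^{*}X,Y),
\]
and by (1) the object $f_{*}\LL f^{*}X$ belongs to $\CU$, so that the right-hand side vanishes because $Y\in\CU^{\perp}$. Hence $\x{f}Y\in(\LL f^{*}\CU)^{\perp}$.

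It remains to treat the case in which $\CU$ is a total pre-aisle. Let $N\in\CW={}^{\perp}((\LL f^{*}\CU)^{\perp})$; we have to show that $f_{*}N$ lies in $\CU={}^{\perp}(\CU^{\perp})$. Take an arbitrary $Y\in\CU^{\perp}$. Using once more the adjunction $f_{*}\dashv \x{f}$,
\[
\Hom_{\D(R)}(f_{*}N,Y)\cong \Hom_{\D(A)}(N,\x{f}Y),
\]
and by (3) we have $\x{f}Y\in(\LL f^{*}\CU)^{\perp}$, so that this group vanishes by the very definition of $\CW$. As this holds for every $Y\in\CU^{\perp}$, we conclude $f_{*}N\in{}^{\perp}(\CU^{\perp})=\CU$. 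The main (and essentially only) subtlety to keep an eye on is the direction of the two adjunctions and the fact that the totality hypothesis is used precisely at the last step to identify ${}^{\perp}(\CU^{\perp})$ with $\CU$.
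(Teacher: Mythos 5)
Your proof is correct and takes essentially the same route as the paper's: part (1) by Proposition~\ref{rigid}, part (2) by the adjunction $\LL f^*\dashv f_*$, part (3) by combining (1) with the adjunction $f_*\dashv\x{f}$, and the final assertion by the same second adjunction together with (3) and the identity $\CU={}^\perp(\CU^\perp)$.
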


\begin{proof}
If $\Mdot\in\CU$ then Proposition~\ref{rigid} shows that $f_*\LL f^*\Mdot = A\otimes_R^{\LL} \Mdot \in\CU$, from which assertion $(1)$ follows.
Assertion $(2)$ follows immediately from the adjunction isomorphism
  \[
  \Hom_{\D(A)}(\LL f^*\Mdot ,\Ndot )\cong \Hom_{\D(R)}(\Mdot ,f_*\Ndot)
  \]
for all $\Mdot\in\D(R)$ and $\Ndot\in\D(A)$.
Due to $(1)$, for all $V\in \CU^\perp$ and  $U\in\CU$  we have that $\Hom_{\D(A)}(\LL f^* U ,\x{f}V  )
     \cong  \Hom_{\D(R)}(f_* \LL f^*U  ,V)=0$, that is $(3)$ follows.
     
In order to check the last assertion note that  for any $W\in\CW$ and  $V\in\CU^\perp$ one has that $0=\Hom_{\D(A)}(W ,\x{f} V)$ by $(3)$; therefore, 
\(
0 = \Hom_{\D(R)}(f_* W,V ).
\)
So $f_*\CW \subset {}^\perp(\CU^\perp)=\CU$.
\end{proof}

\begin{cosa}
\label{notacionlocalizacion}
Let $S\subset R$ be a multiplicative closed subset and $f\colon R\to S^{-1}R$ be the canonical ring homomorphism. The functor $f^*$ is exact so $\LL f^*=f^*\colon\D(R)\to\D(S^{-1}R)$. As usual we denote $f^*\Xdot= S^{-1}\Xdot$ for every object $\Xdot\in\D(R)$, and given a class $\CV$ in $\D(R)$,  $S^{-1}\CV$ stands for $f^*\CV$. The forgetful functor identifies $\D(S^{-1}R)$ with a full subcategory of $\D(R)$. Throughout the paper we identify $\spec(S^{-1}R)$ with the subset $\{\ip\in\spec(R)\,\,;\,\, \ip\cap S=\varnothing \}\subset \spec(R)$.  If $S = R \setminus \iq$ where $\iq$ is a prime ideal of $R$ we will write, as usual, $S^{-1}\CV = \CV_\iq$.

\end{cosa}

\begin{prop}
\label{ejemplogeo}
Let $Z \subset \spec(R)$ be a sp-subset, and let us fix $i$ an integer. Let us denote $\CU=\cu{i}{Z}$ (see Corollary~\ref{ejemplo}), and $\CF=\CU^\perp$. For any multiplicative closed subset $S\subset R$ the pair $(S^{-1}\CU, S^{-1}\CF[1])$ is a $t$-structure on $\D(S^{-1}R)$,  furthermore  $S^{-1}\CU=\CU\cap\D(S^{-1}R)$ and $S^{-1}\CF=\CF\cap\D(S^{-1}R)$.
\end{prop}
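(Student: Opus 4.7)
My plan is to identify $S^{-1}\CU$ with the aisle produced by Corollary~\ref{ejemplo} applied to the sp-subset $Z':=Z\cap\spec(S^{-1}R)$ of $\spec(S^{-1}R)$, then match its co-aisle with $S^{-1}\CF$. The key inputs will be the characterizations in Corollary~\ref{ejemplo} ($\CU=\cu{i}{Z}$ is cut out by the vanishing of $\h^{\leq i}\R\Gamma_Z$), the tensor description $\R\Gamma_Z(-)\cong (-)\otimes^{\LL}_R\R\Gamma_Z R$ recalled in \ref{util0}, and the orthogonality formula of Corollary~\ref{resumen}(2) applied to the flat map $f\colon R\to S^{-1}R$ (for which $\LL f^*=f^*=S^{-1}(-)$).

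\emph{Step 1.} I would first show $S^{-1}\CU=\cu{i}{Z'}=\CU\cap\D(S^{-1}R)$. For any $\Mdot\in\CU$, the complex $S^{-1}\Mdot$ lies in $\Dleq{i}(S^{-1}R)$ and its homology satisfies $\h^j(S^{-1}\Mdot)=S^{-1}\h^j(\Mdot)$ with support $\supp_R(\h^j\Mdot)\cap\spec(S^{-1}R)\subset Z'$; since the support of an $S^{-1}R$-module is the same whether computed over $R$ or over $S^{-1}R$, this gives $S^{-1}\Mdot\in\cu{i}{Z'}$. Conversely, any $\Ndot\in\cu{i}{Z'}$ viewed in $\D(R)$ lies in $\CU$ and satisfies $\Ndot\cong S^{-1}\Ndot$. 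By Corollary~\ref{ejemplo} (applied over $S^{-1}R$ to $Z'$), this class is an aisle of $\D(S^{-1}R)$.

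\emph{Step 2.} Next I would establish $S^{-1}\CF=\CF\cap\D(S^{-1}R)$. The inclusion $\supset$ is immediate since any $V\in\CF\cap\D(S^{-1}R)$ equals its own localization. For $\subset$, let $V\in\CF$; by Corollary~\ref{ejemplo} we have $\h^j(\R\Gamma_Z V)=0$ for all $j\leq i$. Combining the formula from \ref{util0} with flatness of $S^{-1}R$, we obtain
\[
\R\Gamma_Z(S^{-1}V)\cong S^{-1}V\otimes^{\LL}_R\R\Gamma_Z R\cong (V\otimes^{\LL}_R\R\Gamma_Z R)\otimes_R S^{-1}R\cong S^{-1}\R\Gamma_Z V,
\]
so $\h^j(\R\Gamma_Z(S^{-1}V))=S^{-1}\h^j(\R\Gamma_Z V)=0$ for $j\leq i$. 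By Corollary~\ref{ejemplo} this gives $S^{-1}V\in\CF$, hence $S^{-1}V\in\CF\cap\D(S^{-1}R)$.

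\emph{Step 3.} Finally I would close by invoking Corollary~\ref{resumen}(2) for the flat morphism $f\colon R\to S^{-1}R$, which yields
\[
(S^{-1}\CU)^{\perp}=(\CU^{\perp})^{[f_*]}=\CF\cap\D(S^{-1}R)
\]
with orthogonality taken inside $\D(S^{-1}R)$. Combined with Step~2, the co-aisle of the aisle $S^{-1}\CU$ in $\D(S^{-1}R)$ equals $S^{-1}\CF$, so $(S^{-1}\CU,S^{-1}\CF[1])$ is a $t$-structure on $\D(S^{-1}R)$, proving all three assertions simultaneously. The only place where something nontrivial happens is the flat base-change isomorphism $\R\Gamma_Z(S^{-1}V)\cong S^{-1}\R\Gamma_Z V$ in Step~2, but this follows directly from the tensor formula for $\R\Gamma_Z$ together with the flatness of $S^{-1}R$; the rest is bookkeeping.
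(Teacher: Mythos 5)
Your argument is correct and rests on the same key facts the paper uses (the support characterization of $\cu{i}{Z}$ from Corollary~\ref{ejemplo} and the base-change isomorphism $S^{-1}\R\Gamma_Z \cong \R\Gamma_Z \circ S^{-1}(-)$ coming from the tensor description in \S\ref{util0}). The paper organizes the same content more compactly --- it deduces $\tleq{}{\CU}\circ S^{-1}(-)\cong S^{-1}(-)\circ\tleq{}{\CU}$ at once and then notes the truncation triangle of any $M\in\D(S^{-1}R)$ stays in $\D(S^{-1}R)$ --- whereas you handle the aisle, the co-aisle and the orthogonality as three explicit steps, invoking Corollary~\ref{resumen}(2) for the last one; this is a presentational difference rather than a different route.
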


\begin{proof}
For every $\Mdot\in\D(R)$ the canonical map $S^{-1}\R\Gamma_Z \Mdot \to \R\Gamma_Z S^{-1}\Mdot$ is an isomorphism (see \S \ref{util}), therefore $\tleq{}{\CU} S^{-1}\Mdot \cong S^{-1} \tleq{}{\CU}\Mdot$. It follows that $S^{-1}\CU=\CU\cap\D(S^{-1}R)$  and $S^{-1}\CF=\CF\cap\D(S^{-1}R)$. 
Moreover, for any $M\in\D(S^{-1}R)$  the triangle in $\D(R)$
  \[
   \tleq{}{\CU}\Mdot \lto M \lto \tg{}{\CU}\Mdot \overset{+}{\lto}
  \] 
is also in  $\D(S^{-1}R)$, because $\tleq{}{\CU}\Mdot=\tleq{}{\CU}S^{-1}\Mdot\cong S^{-1}\tleq{}{\CU}\Mdot$. As a result $(S^{-1}\CU, S^{-1}\CF[1])$ is a $t$-structure on $\D(S^{-1}R)$.
\end{proof}

\begin{rem}
Note that in particular we have that $(S^{-1}\CU)^\perp =S^{-1}(\CU^\perp)$ (where the first orthogonal is taken in $\D(S^{-1}R)$ and the second in $\D(R)$).
\end{rem}

\section{Aisles
determined by filtrations of supports}
\label{aislesdeterminedbyfil}


\begin{cosa}
\label{ANTES-0.4}
Let us denote by $j_\ia \colon R\to R/\ia$  the canonical morphism determined by the ideal $\ia\subset R$. We begin this section describing the induced $t$-structures on $\D(R)$ by the standard $t$-structures on $\D(R/\ia)$ through the adjunction ${j_\ia}_*\dashv \x{j_\ia}$.

Let  $X$ be a complex of $R$-modules such that $\ia X^i=0$, for all $i\in\mathbb{Z}$. Then $X$ can be also viewed as a complex of $R/\ia$-modules, in such a way that ${j_\ia}_* X = X.$ That being so, for any $Y\in\D (R)$, there are isomorphisms
  \begin{equation}
  \label{ecuacion-0.4}
   \Hom_{\D(R)}(X ,Y )\cong
   \Hom_{\D(R/\ia)}(X , \x{j}_\ia Y)
  \end{equation}
\end{cosa}

\begin{lem}
\label{caract}
Let $\ia\subset R$ be an ideal, and $k\in\ZZ$. For a complex  $Y\in \D(R)$, the following statements are equivalent:
  \begin{enumerate}
   \item $Y \in \aisle{R/\ia[-k]}^\perp$;
   \item $\x{j}_\ia Y \in \Dg{k}(R/\ia).$
  \end{enumerate}
\end{lem}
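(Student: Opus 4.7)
The plan is to unwind the definition of the aisle generated by a single object and then apply the adjunction \eqref{ecuacion-0.4}.

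First I would observe that by the definition of $\aisle{-}$ recalled in \S\ref{aisles}, we have $\aisle{R/\ia[-k]}^\perp = (R/\ia[-k])[\NN]^\perp$. Thus $Y\in\aisle{R/\ia[-k]}^\perp$ if and only if $\Hom_{\D(R)}(R/\ia[-k+i],Y)=0$ for every $i\ge 0$, equivalently $\Hom_{\D(R)}(R/\ia,Y[j])=0$ for every $j\le k$.

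Next I would apply the adjunction isomorphism \eqref{ecuacion-0.4} with $X=R/\ia$, which clearly satisfies $\ia X^n=0$ for all $n$. This gives
\[
\Hom_{\D(R)}(R/\ia,Y[j])\;\cong\;\Hom_{\D(R/\ia)}(R/\ia,\x{j}_\ia Y[j])\;=\;\h^{j}(\x{j}_\ia Y),
\]
where the last identification uses that $R/\ia$ is the unit of $\D(R/\ia)$. Therefore the condition $Y\in\aisle{R/\ia[-k]}^\perp$ is equivalent to $\h^{j}(\x{j}_\ia Y)=0$ for all $j\le k$, which is precisely $\x{j}_\ia Y\in\Dg{k}(R/\ia)$.

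There is no real obstacle here; the argument is a direct computation once one recognizes that orthogonality against the single object $R/\ia[-k]$ already encodes orthogonality against its whole positive suspension set (since the aisle is by construction closed under the shift $[1]$), and then the Hom-groups translate to cohomology of $\x{j}_\ia Y$ via the standard adjunction. The statement is really a reformulation of the fact that $\x{j}_\ia$ is right adjoint to ${j_\ia}_*$ restricted to the standard $t$-structures.
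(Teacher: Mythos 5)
Your proof is correct and follows essentially the same route as the paper: both unwind $\aisle{R/\ia[-k]}^\perp$ as the vanishing of $\Hom_{\D(R)}(R/\ia[i],Y)$ for the relevant range of shifts $i$, and then apply the adjunction isomorphism \eqref{ecuacion-0.4} to identify these Hom groups with cohomology groups of $\x{j}_\ia Y$. You are a bit more explicit than the paper in spelling out the reduction $\aisle{\CM}^\perp = \CM[\NN]^\perp$, but the key computation is identical.
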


\begin{proof}
As a consequence of \ref{ANTES-0.4} it holds that
  \begin{align*}
     \Hom_{\D(R)}({R}/{\ia}[i], Y)
     &\cong \Hom_{\D(R/\ia)}({R}/{\ia}[i],\x{j}_\ia Y)\\
     &=\h^{-i}(\rhomdot_{R/\ia}({R}/{\ia},\x{j}_\ia Y)) =\h^{-i}(\x{j}_\ia Y)
  \end{align*}
for all $i\in \ZZ$. So the result follows.
\end{proof}

\begin{lem}
\label{Two-ideals}
Let $\ia \subset R$ be an ideal. If $\Xdot\in \Dleq{0}{}(R/\ia)$, then $\Xdot={j_\ia}_* \Xdot \in \aisle{R/\ia}$.
\end{lem}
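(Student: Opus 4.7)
The plan is to reduce the statement to the orthogonality characterization given by Lemma~\ref{caract}, using the adjunction $(j_\ia)_* \dashv \x{j_\ia}$ recorded in \ref{ANTES-0.4}. Since $\aisle{R/\ia}$ is an aisle (hence a total pre-aisle), membership in $\aisle{R/\ia}$ can be detected by right-orthogonality: we have $\Xdot \in \aisle{R/\ia}$ if and only if $\Hom_{\D(R)}(\Xdot, Y) = 0$ for every $Y \in \aisle{R/\ia}^\perp$. So the proof reduces to verifying this vanishing.

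First I would fix an arbitrary $Y \in \aisle{R/\ia}^\perp$. By Lemma~\ref{caract} applied with $k=0$, this is equivalent to $\x{j_\ia}(Y) \in \Dg{0}(R/\ia)$. Next, because $\Xdot$ is a complex of $R/\ia$-modules and ${j_\ia}_*\Xdot = \Xdot$ as complexes of $R$-modules, the adjunction isomorphism (\ref{ecuacion-0.4}) of \ref{ANTES-0.4} yields
\[
\Hom_{\D(R)}(\Xdot, Y) \;\cong\; \Hom_{\D(R/\ia)}(\Xdot, \x{j_\ia} Y).
\]

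Finally, since $\Xdot \in \Dleq{0}(R/\ia)$ and $\x{j_\ia} Y \in \Dg{0}(R/\ia)$, the right-hand side vanishes by the orthogonality defining the canonical $t$-structure on $\D(R/\ia)$. This gives $\Hom_{\D(R)}(\Xdot, Y) = 0$ for every $Y$ in $\aisle{R/\ia}^\perp$, so $\Xdot$ lies in $\aisle{R/\ia}$.

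I do not anticipate a real obstacle here: the statement is essentially a formal consequence of the adjunction and Lemma~\ref{caract}, once one uses that any aisle is automatically a total pre-aisle so that it equals the left orthogonal of its right orthogonal. The only mild subtlety is to remember that $\aisle{R/\ia}$ denotes the aisle taken in $\D(R)$, so both the source of the adjunction and the orthogonal complements are taken in $\D(R)$, while the final vanishing is deduced in $\D(R/\ia)$.
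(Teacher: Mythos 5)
Your proof is correct. The paper itself gives no argument (its proof is simply ``Clear.''), so there is nothing to compare against directly, but the elementary argument the authors almost certainly have in mind goes the other way: the forgetful functor $(j_\ia)_*$ is exact and preserves coproducts, suspensions, and distinguished triangles, so the class $\{N\in\D(R/\ia)\;:\;(j_\ia)_*N\in\aisle{R/\ia}\}$ is a cocomplete pre-aisle of $\D(R/\ia)$; since it contains the stalk complex $R/\ia[0]$, it must contain the smallest cocomplete pre-aisle $\Dleq{0}(R/\ia)$ generated by it, which is exactly the claim. Your route instead detects membership in the aisle by vanishing of $\Hom$ against the coaisle, and then uses the adjunction $(j_\ia)_*\dashv\x{j_\ia}$ together with Lemma~\ref{caract} to transfer that vanishing to the standard orthogonality in $\D(R/\ia)$. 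Both are valid; the direct structural argument is shorter and does not need Lemma~\ref{caract} (indeed Lemma~\ref{caract} is itself a reformulation of the adjunction you invoke), whereas your argument makes the use of the adjunction completely explicit, which is perhaps easier to audit. The one point worth stating explicitly in your write-up is that the isomorphism~(\ref{ecuacion-0.4}) applies to an arbitrary $\Xdot\in\D(R/\ia)$ because any such object is represented by a genuine complex of $R/\ia$-modules, i.e.\ by a complex of $R$-modules each of whose terms is annihilated by $\ia$.
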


\begin{proof}
Clear.
\end{proof}

\begin{prop}
\label{prop-Two-ideals}
The following statements hold for any ideals $\ia ,\,\ib \subset R$:
  \begin{enumerate}
    \item If $\ia\subset\ib$ then $R/\ib\in\aisle{R/\ia}$.
    \item We have that $\aisle{R/{\ia \ib}}=\aisle{R/{\ia},R/{\ib}}
          =\aisle{R/{\ia\cap \ib}}$.
    \item For all $n \geq 1$, $\aisle{R/\ia^n}=\aisle{R/\ia}$.
    \item If $\rad(\ia)=\rad(\ib)$ then $\aisle{R/\ia}=\aisle{R/\ib}$.
  \end{enumerate}
\end{prop}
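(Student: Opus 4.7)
My plan is to handle the four statements in order, exploiting the fact that each one feeds into the next, so that at each step only minor additional work is needed. The whole strategy is based on the elementary observation that an aisle is closed under extensions and on Lemma~\ref{Two-ideals}, which lets us transfer $R/\ia$-modules concentrated in non-positive degrees into $\aisle{R/\ia}$.

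For $(1)$, if $\ia\subset\ib$ then $R/\ib$ is an $R/\ia$-module, so $R/\ib\in\Dleq{0}(R/\ia)$ and Lemma~\ref{Two-ideals} gives $R/\ib={j_\ia}_*R/\ib\in\aisle{R/\ia}$ directly. For $(2)$, the inclusions $\ia\ib\subset\ia\cap\ib\subset\ia,\ib$ combined with $(1)$ yield $\aisle{R/\ia,R/\ib}\subset \aisle{R/(\ia\cap\ib)}\subset\aisle{R/\ia\ib}$, so only the reverse inclusions need argument. To prove $R/\ia\ib\in\aisle{R/\ia,R/\ib}$, I use the short exact sequence
\[
0\lto \ia/\ia\ib\lto R/\ia\ib\lto R/\ia\lto 0,
\]
where $\ia/\ia\ib$ is annihilated by $\ib$, hence is an $R/\ib$-module and therefore belongs to $\aisle{R/\ib}$ by Lemma~\ref{Two-ideals}; closure under extensions then puts $R/\ia\ib$ in $\aisle{R/\ia,R/\ib}$. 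A completely parallel argument using the short exact sequence $0\to \ia/(\ia\cap\ib)\to R/(\ia\cap\ib)\to R/\ia\to 0$, together with the isomorphism $\ia/(\ia\cap\ib)\cong (\ia+\ib)/\ib$ (an $R/\ib$-module), shows $R/(\ia\cap\ib)\in\aisle{R/\ia,R/\ib}$.

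Item $(3)$ is an immediate induction on $n$: the case $n=1$ is trivial, and from $(2)$ applied to $\ia$ and $\ia^n$ we obtain $\aisle{R/\ia^{n+1}}=\aisle{R/(\ia\cdot\ia^n)}=\aisle{R/\ia,R/\ia^n}$, which equals $\aisle{R/\ia}$ by the inductive hypothesis. For $(4)$, let $\ic=\rad(\ia)=\rad(\ib)$; since $R$ is Noetherian, there exists $n\geq 1$ with $\ic^n\subset\ia\subset\ic$. Item $(1)$ applied to $\ia\subset\ic$ gives $\aisle{R/\ic}\subset\aisle{R/\ia}$, and applied to $\ic^n\subset\ia$ gives $\aisle{R/\ia}\subset\aisle{R/\ic^n}$, which equals $\aisle{R/\ic}$ by $(3)$. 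Thus $\aisle{R/\ia}=\aisle{R/\ic}$, and the same equality for $\ib$ yields the desired coincidence.

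I do not foresee any serious obstacle here; the only point that requires a small amount of care is identifying the relevant kernels in $(2)$ as modules over a quotient of $R$ so that Lemma~\ref{Two-ideals} applies, and ensuring in $(4)$ that the Noetherian hypothesis is invoked precisely at the step where a power of the radical is absorbed into the ideal.
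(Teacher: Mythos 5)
Your proof is correct and follows essentially the same strategy as the paper: item (1) via Lemma~\ref{Two-ideals}, item (2) via the extension $0\to\ia/\ia\ib\to R/\ia\ib\to R/\ia\to 0$ together with the chain $\ia\ib\subset\ia\cap\ib\subset\ia,\ib$, item (3) by induction, and item (4) via the Noetherian absorption of powers of the radical. The only minor inefficiency is in (2): once you have $\aisle{R/\ia,R/\ib}\subset\aisle{R/(\ia\cap\ib)}\subset\aisle{R/\ia\ib}$ and then show $R/\ia\ib\in\aisle{R/\ia,R/\ib}$, the chain closes and all three are forced equal, so the ``completely parallel argument'' with the second exact sequence is redundant.
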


\begin{proof}
The statement $(1)$ is the particular case of Lemma~\ref{Two-ideals} in which $\Xdot\in \Dleq{0}{}(R/\ia)$ is the stalk complex $R/\ib$.
 
In order to prove $(2)$ note that $\ia/{\ia \ib}$ is also an $R/{\ib}$-module then $\ia/{\ia  \ib}\in\aisle{R/{\ib}}$ by Lemma~\ref{Two-ideals}. As a consequence the middle point in the exact sequence $0\to \ia/{\ia  \ib}\to R/{\ia  \ib}\to R/{\ia} \to 0$ belongs to $\aisle{R/{\ia}, R/{\ib}}$ because the extreme points do. Therefore $\aisle{R/{\ia  \ib}} \subset \aisle{R/{\ia}, R/{\ib}}$.  We finish the proof of $(2)$ applying $(1)$ to the chains of ideals $\ia  \ib \subset \ia \cap \ib \subset \ia$ and $\ia  \ib \subset \ia \cap \ib \subset \ib$. 

Statement $(3)$ follows by induction on $n\geq 1$ from the first equality in $(2)$.

The ring $R$ is Noetherian, so if $\rad(\ia)=\rad(\ib)$  there exist $s,t \in\NN$ such that $\ia^s\subset \ib$ and $\ib^t\subset \ia$. Therefore $(4)$ follows from $(1)$ and $(3)$.
\end{proof}

\begin{cor}
\label{corimportante2}
Let $\{\ip_1,\dots , \ip_s \}$ be the minimal prime ideals over the ideal $\ia \subset R$. Then $\aisle{R/\ia}=\aisle{R/\ip_1,\dots , R/\ip_s}$.
\end{cor}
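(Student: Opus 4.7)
The plan is to reduce the statement to a direct application of the two parts of Proposition~\ref{prop-Two-ideals} that handle radicals and intersections. The key observation is the classical fact from commutative algebra that, since $R$ is Noetherian, the radical of $\ia$ coincides with the intersection of the (finitely many) minimal primes over it:
\[
\rad(\ia)=\ip_1\cap\ip_2\cap\cdots\cap\ip_s.
\]

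First, I would invoke Proposition~\ref{prop-Two-ideals}(4) to replace $\ia$ by its radical without changing the generated aisle, obtaining
\[
\aisle{R/\ia}=\aisle{R/\rad(\ia)}=\aisle{R/(\ip_1\cap\cdots\cap\ip_s)}.
\]
Then I would use the second equality in Proposition~\ref{prop-Two-ideals}(2), namely $\aisle{R/(\ib\cap\ic)}=\aisle{R/\ib,R/\ic}$, and iterate it $s-1$ times via a straightforward induction on $s$: having reduced
\[
\aisle{R/(\ip_1\cap\cdots\cap\ip_{s-1})}=\aisle{R/\ip_1,\dots,R/\ip_{s-1}},
\]
one writes $\ip_1\cap\cdots\cap\ip_s=(\ip_1\cap\cdots\cap\ip_{s-1})\cap\ip_s$ and applies (2) once more to split off the $R/\ip_s$ factor. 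Concatenating the induction hypothesis yields the desired equality $\aisle{R/\ia}=\aisle{R/\ip_1,\dots,R/\ip_s}$.

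There is no genuine obstacle here; the whole content is algebraic bookkeeping on top of the two parts of Proposition~\ref{prop-Two-ideals} already established, together with the Noetherian fact that $\rad(\ia)$ is a finite intersection of primes. The only minor care needed is to make sure the induction hypothesis is stated with generators in the aisle-notation (a list of cyclic modules) rather than with a single cyclic module, but this is automatic since $\aisle{R/\ib,R/\ic}=\aisle{(R/\ib)\oplus(R/\ic)}$ by the remark in \S\ref{aisles} that any aisle can be presented by a single generator.
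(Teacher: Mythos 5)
Your proof is correct and follows essentially the same route as the paper: both use the fact that $\rad(\ia)=\ip_1\cap\cdots\cap\ip_s$, the second identity in Proposition~\ref{prop-Two-ideals}(2) iterated by induction on $s$, and Proposition~\ref{prop-Two-ideals}(4). The only cosmetic difference is the order in which (2) and (4) are applied, which does not affect the argument.
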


\begin{proof}
Using that $\rad (\ia)=\ip_1\cap \dots\cap \ip_s$  and the second identity in Proposition~\ref{prop-Two-ideals}(2) we easily prove by induction on $s$ that $\aisle{R/\rad(\ia)}=\aisle{R/\ip_1,\dots ,R/\ip_s}$. Finally apply (4) in Proposition~\ref{prop-Two-ideals}. 
\end{proof}

\begin{cor}
\label{torsion}
Let $\ia\subset R$ be an ideal and $Z:=\V(\ia)\subset \spec(R)$. Let $Y \in\D(R)$ be a  complex  such that $Y\in \aisle{R/\ia}^\perp$, then $\R\Gamma_Z Y \in\Dg{0}(R).$
\end{cor}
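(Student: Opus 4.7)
The plan is to compute $\R\Gamma_Z Y$ as a filtered colimit involving the cyclic modules $R/\ia^n$ and then to exploit the hypothesis $Y\in\aisle{R/\ia}^\perp$ by noting that every $R/\ia^n$ already lies in $\aisle{R/\ia}$ by virtue of Proposition~\ref{prop-Two-ideals}(3).

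First I would recall from \S\ref{util0} that $\Gamma_Z=\dirlim{\V(\ib)\subset Z}\Hom_R(R/\ib,-)$, and that a basis of ideals for the associated Gabriel topology suffices. Since $R$ is Noetherian and $Z=\V(\ia)$, the condition $\V(\ib)\subset Z$ is equivalent to $\ia\subset\rad(\ib)$, which, by the Noetherian hypothesis, implies $\ia^n\subset \ib$ for some $n\geq 1$. Hence the family $\{\ia^n\}_{n\geq 1}$ is cofinal, and, passing to K-injective resolutions, one obtains a natural isomorphism
\[
  \R\Gamma_Z Y\;\cong\;\dirlim{n\in\NN}\R\Hom_R(R/\ia^n,Y).
\]

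Next, by Proposition~\ref{prop-Two-ideals}(3) we have $\aisle{R/\ia^n}=\aisle{R/\ia}$, so $R/\ia^n\in\aisle{R/\ia}$ for every $n\geq 1$. Since aisles are closed under positive translations, $R/\ia^n[i]\in\aisle{R/\ia}$ for all $i\geq 0$. The hypothesis $Y\in\aisle{R/\ia}^\perp$ therefore yields
\[
  \h^{-i}\bigl(\R\Hom_R(R/\ia^n,Y)\bigr)\;\cong\;\Hom_{\D(R)}(R/\ia^n[i],Y)\;=\;0
\]
for all $i\geq 0$ and all $n\geq 1$, \ie{} $\R\Hom_R(R/\ia^n,Y)\in\Dg{0}(R)$ for every $n$.

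Finally, because filtered colimits are exact in $\Mod(R)$, they commute with the cohomology functors $\h^j$. Applying this to the direct system above gives
\[
  \h^j(\R\Gamma_Z Y)\;\cong\;\dirlim{n}\,\h^j\bigl(\R\Hom_R(R/\ia^n,Y)\bigr)\;=\;0
\]
for every $j\leq 0$, which is precisely $\R\Gamma_Z Y\in\Dg{0}(R)$. The argument is essentially mechanical once the cofinality of $\{\ia^n\}$ is in hand; the only point that demands a small amount of care is the passage to derived functors when identifying $\R\Gamma_Z Y$ with the colimit of the $\R\Hom_R(R/\ia^n,Y)$, and the verification that direct limits of complexes preserve the connective/coconnective truncation properties. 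No serious obstacle arises, as both facts are standard consequences of the exactness of filtered colimits in $\Mod(R)$.
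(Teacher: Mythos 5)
Your argument is correct and follows essentially the same route as the paper: pass to a K-injective resolution so that $\R\Gamma_Z Y$ becomes the filtered colimit $\dirlim{n}\Hom^\cdot_R(R/\ia^n,Y)$, invoke Proposition~\ref{prop-Two-ideals}(3) to get $\Hom_{\D(R)}(R/\ia^n[i],Y)=0$ for $i\geq 0$, and use exactness of filtered colimits to commute $\dirlim{}$ past cohomology. The only difference is that you spell out the cofinality of $\{\ia^n\}$ inside the Gabriel topology explicitly, whereas the paper treats it as implicit.
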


\begin{proof}
Without lost of generality we may assume that $\Ydot$ is $K$-injective, so $\R \Gamma _Z Y=\Gamma _Z Y$. By Proposition~\ref{prop-Two-ideals} it holds that $Y\in \aisle{R/{\ia}^n}^\perp$ for all $n\geq 1$, that is, $\Hom_{\D(R)}({R}/{\ia^n}[i], \,\,Y)=0$ for all $n\geq 1$ and all $i\geq 0$. As a consequence for all $i\geq 0$      
  \begin{align*}
    \h^{-i} \Gamma _Z Y
    = & \h^{-i}\dirlim{n\geq 1}\Hom^\cdot_{R }({R}/{\ia^n},\,Y)\\
    = & \dirlim{n\geq 1}\h^{-i}\Hom^\cdot_{R }({R}/{\ia^n}, \,Y )\\
    = & \dirlim{n \geq 1} \Hom_{\D(R)}({R}/{\ia^n}[i], \,\,Y)=0. \qedhere
  \end{align*}
\end{proof}

\begin{prop}
\label{truncacionparaunideal}
Let $Z\subset \spec(R)$ be a sp-subset, $i\in\ZZ$, and $\cu{i}{Z}$ be the aisle defined in Corollary~\ref{ejemplo}. Then
  \[
   \cu{i}{Z}=\aisle{R/\ip[-i];\,\ip \in Z}.
  \]
\end{prop}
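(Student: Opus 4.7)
My plan is to prove the two inclusions separately and to use orthogonality (rather than a direct construction) for the harder direction.

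For the inclusion $\aisle{R/\ip[-i]; \ip \in Z} \subset \cu{i}{Z}$, it suffices to check the generators. Given $\ip \in Z$, the stalk complex $R/\ip[-i]$ lies in $\Dleq{i}(R)$ with $\h^i = R/\ip$, and $\supp(R/\ip) = \V(\ip) \subset Z$ because $Z$ is stable under specialization. By Corollary~\ref{ejemplo}, $\cu{i}{Z}$ is an aisle, and since it contains every generator $R/\ip[-i]$ with $\ip \in Z$, it must contain the aisle they generate.

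For the reverse inclusion $\cu{i}{Z} \subset \aisle{R/\ip[-i]; \ip \in Z}$, let me denote $\CV = \aisle{R/\ip[-i]; \ip \in Z}$ and $\CU = \cu{i}{Z}$. Both are aisles, hence total pre-aisles, so it is enough to establish $\CV^\perp \subset \CU^\perp$. By Corollary~\ref{ejemplo}, being in $\CU^\perp$ amounts to $\tleq{i}{} \R\Gamma_Z Y = 0$, equivalently $\h^k(\R\Gamma_Z Y) = 0$ for all $k \leq i$. Using the description $\R\Gamma_Z = \dirlim{\V(\ia)\subset Z} \R\Hom_R(R/\ia,-)$ from \S\ref{util0}, this reduces to showing that $\ext^k_R(R/\ia, Y) = 0$ whenever $\V(\ia) \subset Z$ and $k \leq i$.

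Given such an ideal $\ia$, let $\ip_1, \dots, \ip_s$ be its minimal primes, all lying in $Z$. Since $\CV$ is closed under positive shifts, $R/\ip_r[j] \in \CV$ for each $r$ and every $j \geq -i$. Applying the shift $[j]$ to Corollary~\ref{corimportante2} gives $R/\ia[j] \in \aisle{R/\ip_1[j], \dots, R/\ip_s[j]} \subset \CV$ for all $j \geq -i$. Therefore, if $Y \in \CV^\perp$, then $\Hom_{\D(R)}(R/\ia[j], Y) = 0$ for every $j \geq -i$, which is exactly $\ext^k_R(R/\ia, Y) = 0$ for $k \leq i$. Passing to the filtered colimit yields $\h^k(\R\Gamma_Z Y) = 0$ for $k \leq i$, so $Y \in \CU^\perp$, finishing the proof.

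The main potential obstacle is the transition from the generating primes $\ip \in Z$ to arbitrary ideals $\ia$ with $\V(\ia) \subset Z$; this is exactly what Corollary~\ref{corimportante2} handles, together with the compatibility of taking shifts with the aisle generated by a set, which reduces the Ext-vanishing along the filtered colimit defining $\R\Gamma_Z$ to the vanishing already guaranteed by $Y \in \CV^\perp$ on the prime generators.
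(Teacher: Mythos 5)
Your proof is correct and follows essentially the same strategy as the paper: prove the easy inclusion directly on generators, then pass to orthogonals and use the Gabriel-filter description of $\R\Gamma_Z$ together with Corollary~\ref{corimportante2} to get the required $\ext$-vanishing and conclude via a filtered colimit. The only difference is a minor repackaging — the paper funnels the $\ext$-vanishing through Corollary~\ref{torsion} (which handles the powers $\ia^n$ of a fixed $\ia$ and the associated $\R\Gamma_{\V(\ia)}$), whereas you get $\ext^k_R(R/\ia,Y)=0$ for $k\leq i$ directly for every $\ia$ in the Gabriel filter from Corollary~\ref{corimportante2} plus closure of $\CV$ under positive shifts, and then take a single colimit; this is a mild streamlining of the same computation.
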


\begin{proof}
To prove this result it is enough to deal with the case $i=0$.
Let us recall from Corollary~\ref{ejemplo} that 
$\tleq{0}{} \R\Gamma_Z$ is the left truncation functor associated to the aisle $\cu{0}{Z}$. 

Let $\{\ia_\alpha\}_{\alpha \in I}$ be the Gabriel filter of ideals such that $Z_\alpha:=\V(\ia_\alpha)\subset Z.$ By Corollary~\ref{corimportante2} it is enough to prove that 
$
\cu{0}{Z}=\aisle{{R}/{\ia_\alpha}\,;\,\,\alpha \in I}
$.
Trivially $\CU := \aisle{{R}/{\ia_\alpha}\,;\,\,\alpha \in I} \subset \CU^0_Z.$  To prove the equality let us check that $\CU^\perp{}\subset {\CU^{0}_Z}{}^{\,\perp}$. Let $j\leq{0}$ and $Y \in \CU^\perp{},$ then Corollary~\ref{torsion}  asserts that $\h^j(\R\Gamma_{Z_\alpha} Y)=0$ for all $\alpha \in I$, therefore
  \[
   \h^j(\R\Gamma_Z Y)= 
   \dirlim{\alpha \in I} \h^j(\R\Gamma_{Z_\alpha} Y)=0
  \]
That is, $\tleq{0}{}\R\Gamma_Z Y=0$, equivalently $Y\in {\CU^0_Z}{}^{\,\perp}$.
\end{proof}

\begin{cosa}
\label{filtrationbyporaisle} A \emph{filtration by supports of} $\spec(R)$ is a decreasing map
  \[
   \phi \colon \ZZ\lto\CP(\spec(R))
  \]
such that $\phi (i)\subset \spec(R)$ is a sp-subset for each $i\in\ZZ$. To abbreviate, we will refer to a filtration by supports of $\spec(R)$ simply by a \emph{sp-filtration} of $\spec(R)$.

Let $\CU$ be an aisle of $\D(R)$. Having in mind that $\CU[1]\subset \CU$ and the statement $(1)$ in Proposition~\ref{prop-Two-ideals}, the aisle $\CU$ determines a sp-filtration
  $
   \phi_\CU\colon\ZZ\to\CP(\spec(R))
  $
by setting, for each $i\in\ZZ$,
  \[
  \phi_\CU(i):=\{\ip\in\spec(R)\,\,;\,\,R/\ip[-i]\in\CU\}.
  \]
The other way round a sp-filtration $\phi\colon \ZZ\to\CP(\spec(R))$ has an \emph{associated aisle}
  $
   \CU_\phi := \aisle{ {\,{R}/{\ip}[-i]\,\,;\,\, i\in\ZZ\,}
   \text{ and } \ip\in\phi(i)}.
  $
\end{cosa}

\bigskip

Fix $i$ an integer and $Z\subset\spec(R)$ a sp-subset. Let $\phi\colon \ZZ\to\CP(\spec(R))$ be the sp-filtration defined by $\phi(j)=Z$ for all $j\leq i$, and $\phi(j)=\varnothing$ if $j > i$. The previous proposition shows that $\cu{i}{Z}=\CU_\phi$.
The following shows the compatibility of these aisles with respect to localization in a multiplicative closed subset of $R$, generalizing
Proposition~\ref{ejemplogeo}:

\begin{prop}
\label{localizacion}
Let $S\subset R$ be a multiplicative closed subset. Given a sp-filtration $\phi\colon \ZZ\to\CP(\spec(R))$ let us denote by $\CF_\phi$  the right orthogonal of $\CU_\phi$ in $\D(R)$. Then $(S^{-1}\CU_\phi,\, S^{-1}\CF_\phi [1])$ is a $t$-structure on $\D(S^{-1}R)$, furthermore  $S^{-1}\CU_\phi=\CU_\phi\cap\D(S^{-1}R)$ and $S^{-1}\CF_\phi=\CF_\phi\cap\D(S^{-1}R)$.  Besides $S^{-1}\CU_\phi\subset \D(S^{-1}R)$ is the associated aisle to the sp-filtration
  \[
   \phi_S\colon \ZZ\to \CP(\spec(S^{-1}R))
  \]
defined by $\phi_S(i):=\phi(i)\cap \spec(S^{-1}R)$, for $i\in \ZZ$.
\end{prop}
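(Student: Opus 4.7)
The approach reduces the statement to Corollary~\ref{resumen} applied to the flat ring homomorphism $f\colon R\to S^{-1}R$, for which $\LL f^{*}=f^{*}=S^{-1}(-)$ is exact, together with flat base change for $\ext$.

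First I would establish the identities $S^{-1}\CU_\phi = \CU_\phi\cap\D(S^{-1}R)$ and $S^{-1}\CF_\phi = \CF_\phi\cap\D(S^{-1}R)$. Since $\CU_\phi$ is a cocomplete pre-aisle of $\D(R)$ by construction (see~\S\ref{aisles}), Corollary~\ref{resumen}(1) gives $f_{*}S^{-1}\CU_\phi\subset\CU_\phi$, whence $S^{-1}\CU_\phi\subset\CU_\phi\cap\D(S^{-1}R)$; conversely, any $M\in\D(S^{-1}R)$ satisfies $M=S^{-1}M$, which supplies the reverse inclusion. For $\CF_\phi$ the essential input is flat base change: $R/\ip$ admits a resolution by finitely generated free modules over the Noetherian ring $R$, so for every $Y\in\D(R)$, $j\in\ZZ$ and $\ip\in\spec(R)$ one has the natural isomorphism
\[
\ext^{j}_{R}(R/\ip,\,S^{-1}Y)\cong S^{-1}\ext^{j}_{R}(R/\ip,\,Y).
\]
Because membership in $\CF_\phi$ is characterized by the vanishing of $\ext^{j}_{R}(R/\ip,-)$ for all $\ip\in\phi(i)$ and $j\leq i$, such vanishings are preserved by $S^{-1}$; hence $S^{-1}\CF_\phi\subset\CF_\phi$, and the same trick as before yields the desired identity.

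Second, I would check that $(S^{-1}\CU_\phi,\,S^{-1}\CF_\phi[1])$ is a $t$-structure on $\D(S^{-1}R)$. The shift conditions are immediate since $S^{-1}$ commutes with translations. Orthogonality follows from the adjunction $f^{*}\dashv f_{*}$: for $U\in\CU_\phi$ and $F\in\CF_\phi$,
\[
\Hom_{\D(S^{-1}R)}(S^{-1}U,\,S^{-1}F)\cong\Hom_{\D(R)}(U,\,S^{-1}F)=0,
\]
the last equality because $S^{-1}F\in\CF_\phi$ by the first step. Finally, applying $S^{-1}$ to the triangle $\tleq{}{\CU_\phi}M\to M\to\tg{}{\CU_\phi}M\overset{+}{\to}$ in $\D(R)$ for any $M\in\D(S^{-1}R)$ and using $S^{-1}M=M$ produces the required distinguished triangle in $\D(S^{-1}R)$ with outer terms in $S^{-1}\CU_\phi$ and $S^{-1}\CF_\phi[1]$.

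For the final assertion I would identify $S^{-1}\CU_\phi$ with the aisle $\CU_{\phi_S}\subset\D(S^{-1}R)$. On generators, $S^{-1}(R/\ip)[-i]$ is zero when $\ip\cap S\neq\varnothing$ and otherwise equals the generator $(S^{-1}R)/(S^{-1}\ip)[-i]$ of $\CU_{\phi_S}$; the aisle $S^{-1}\CU_\phi$ (established in the previous step) therefore contains all the generators of $\CU_{\phi_S}$, giving $\CU_{\phi_S}\subset S^{-1}\CU_\phi$. The reverse inclusion follows by the standard verification that $\{X\in\CU_\phi\,;\,S^{-1}X\in\CU_{\phi_S}\}$ is a cocomplete pre-aisle of $\D(R)$ containing all the generators of $\CU_\phi$, hence equal to $\CU_\phi$, the point being that $f^{*}$ preserves coproducts, shifts and triangles. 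The only step that is not essentially formal is the stability of $\CF_\phi$ under $S^{-1}$, whose real work is performed by flat base change for $\ext$; everything else is an orchestration of Corollary~\ref{resumen} and a generator-chasing argument.
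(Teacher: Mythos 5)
Your overall strategy is the same as the paper's (first prove the two identities $S^{-1}\CU_\phi=\CU_\phi\cap\D(S^{-1}R)$ and $S^{-1}\CF_\phi=\CF_\phi\cap\D(S^{-1}R)$, then apply $S^{-1}$ to the truncation triangle and chase generators), and steps one, three and four are fine. The gap is exactly in what you call the ``only step that is not essentially formal'': the claim that $\ext^{j}_{R}(R/\ip,\,S^{-1}Y)\cong S^{-1}\ext^{j}_{R}(R/\ip,\,Y)$ for arbitrary $Y\in\D(R)$. The module $R/\ip$ is finitely generated, but for a general Noetherian $R$ it has infinite projective dimension, so the resolution $P$ by finite free modules is unbounded below. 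In that case $\Hom^{n}_{R}(P,Y)=\prod_{k\le 0}\Hom_{R}(P^{k},Y^{k+n})$ is a genuinely infinite product when $Y$ is unbounded below, and $S^{-1}(-)$ does not commute with infinite products; equivalently, $R/\ip$ is not compact, so $\Hom_{\D(R)}(R/\ip,-)$ need not commute with the filtered colimit $S^{-1}Y\cong\dirlim{s\in S} Y$. The alternative route through K-injectives also breaks down because the localization $S^{-1}I$ of a K-injective complex $I$ is a complex of injective $S^{-1}R$-modules but need not be K-injective, so one cannot conclude that $\Hom^\cdot_R(R/\ip,S^{-1}I)$ computes $\rhomdot_R(R/\ip,S^{-1}Y)$. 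Thus ``flat base change for $\ext$'' in the unbounded setting is not the formal statement you are treating it as.

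The paper sidesteps this precisely by writing $\CF_\phi=\bigcap_{i}\CF_i$ with $\CF_i=\bigl(\cu{i}{\phi(i)}\bigr)^{\perp}$ and invoking Proposition~\ref{ejemplogeo}, whose proof reduces the localization-compatibility to the $\otimes$-compatibility $\R\Gamma_Z E\cong E\otimes^{\LL}_{R}\R\Gamma_Z R$ from \S\ref{util0}: with that identification $S^{-1}\R\Gamma_Z\cong\R\Gamma_Z\,S^{-1}(-)$ is a consequence of the associativity and commutativity of $\otimes^{\LL}_{R}$, and works for arbitrary unbounded complexes. If you want to keep an argument closer in spirit to yours, the clean fix is to replace $R/\ip$ by the Koszul complexes generating $\cu{0}{Z}$ (Corollary~\ref{corimportante}); these are perfect, hence compact, so $\Hom_{\D(R)}(K,-)$ does commute with the filtered colimit $S^{-1}(-)$ and the vanishing passes to $S^{-1}Y$ formally. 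The catch is that Corollary~\ref{corimportante} and Proposition~\ref{4.7} appear \emph{after} Proposition~\ref{localizacion} in the paper and their proofs invoke it, so citing them here would be circular; you would have to reorganize the logical order or reprove the Koszul characterization of $\cu{0}{Z}$ without recourse to this proposition.
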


\begin{proof}
The aisle $\CU:=\CU_\phi$ is the smallest containing all the aisles in the set $\{\CU_i:=\cu{i}{\phi(i)}\,\,;\,\,i\in\ZZ\}$, equivalently $\CF:=\CF_\phi$ is obtained by intersecting the classes $\{\CF_i:={\cu{i\,\,\,\perp}{\phi(i)}}\,\,;\,\,i\in\ZZ\}$. For any $M\in\D(S^{-1}R)$ the distinguished triangle in $\D(R)$ associated to $(\CU, \CF[1])$
  \begin{equation}
  \label{localt}
   N \lto M \lto Y \overset{+}{\lto}
  \end{equation}
belongs to $\D(S^{-1}R)$. Indeed, $S^{-1}\Ndot \in\CU$ by Proposition~\ref{rigid}. Given a complex $\Ydot\in \CF$ we have that $\Ydot\in \CF_i$ for any $i\in\ZZ$, then by Proposition~\ref{ejemplogeo}, we have that $S^{-1}\Ydot\in \CF_i$ for all $i\in\ZZ$, that is $S^{-1}\Ydot\in \CF$. Necessarily the distinguished triangle $S^{-1}N \lto M \lto S^{-1}Y \overset{+}{\lto}\,\,$ is canonically isomorphic to (\ref{localt}).
As a consequence $S^{-1}\CU$ is an aisle of $\D(S^{-1}R)$ with right orthogonal class $S^{-1}\CF$. Moreover  $S^{-1}\CU=\CU\cap\D(S^{-1}R)$  and $S^{-1}\CF=\CF\cap\D(S^{-1}R)$. Finally, note that $\Edot\in S^{-1}\CF$ if and only if, for each $i\in\ZZ$
  \[
  0=\Hom_{\D(R)}(R/\ip[-i], \Edot[j])\cong
     \Hom_{\D(S^{-1}R)}(S^{-1}(R/\ip)[-i], \Edot[j]).
  \]
for all $\ip\in \phi(i)$ and all $j\leq 0$. This fact amounts to saying that
\[
0=\Hom_{\D(S^{-1}R)}(S^{-1}R/\iq[-i], \Edot[j]),
\]
for each $i\in\ZZ$, all $\iq\in \phi(i)\cap \spec(S^{-1}R)$ and all $j\leq 0$. We conclude that $S^{-1}\CU$ is the aisle of $\D(S^{-1}R)$ associated to $\phi_S\colon \ZZ\to\CP(\spec(S^{-1}R))$.
\end{proof}

\begin{rem}
Let us consider the notation in Proposition~\ref{localizacion}. Let $\CW$ be any of the classes $\CU_\phi$ or ${\CF_\phi}$. From the previous results it follows that a complex $\Xdot\in\D(R)$ is in ${\CW}$ if and only if $\Xdot_\ip$ belongs to ${\CW}$ for any $\ip\in\spec(R)$.
\end{rem}

\section{Compactly generated aisles}
\label{compactas}

In Proposition~\ref{4.7} we study the aisles of $\D(R)$ generated  by bounded above complexes with finitely generated homologies.  It is a key result in the proof of Theorem~\ref{4.9} and Theorem~\ref{classification-c-g}, the main results in this section. We begin by proving some useful lemmas. Let us adopt the convention that $\Dleq{-\infty}(R) = 0$ and $\Dg{-\infty}(R) = \D(R)$.
 

\begin{lem}
\label{corrector}
Let $\ia\subset R$ be an ideal, $j_\ia \colon R\to R/\ia$ the canonical map, and $Y$ a complex of $R$-modules. Assume that the following two
conditions hold for a fixed $m\in \ZZ$:
  \begin{enumerate}
      \item $\x{j}_\ia Y \in \Dg{m}(R/\ia),$
      \item $\supp(\h^i(Y))\subset \V (\ia )$ for all $i\leq m.$
  \end{enumerate}
Then $\Ydot\in \Dg{m}(R).$
\end{lem}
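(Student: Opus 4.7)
The plan is to reduce everything to a single argument with the acyclization triangle for the support $Z=\V(\ia)$. The strategy has three stages: first use condition (1) to control the cohomology of $\R\Gamma_Z Y$, then use condition (2) to identify $\tleq{m}{}Y$ with its $Z$-torsion version, and finally combine the two via a long exact sequence applied to the standard truncation triangle.

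First I would translate (1) into an orthogonality statement. By Lemma~\ref{caract} condition (1) is equivalent to $Y\in\aisle{R/\ia[-m]}^\perp$, and by Proposition~\ref{prop-Two-ideals}(3) this is the same as $Y\in\aisle{R/\ia^n[-m]}^\perp$ for every $n\ge 1$. Equivalently, $Y[m]\in\aisle{R/\ia}^\perp$, so Corollary~\ref{torsion} applied to $Y[m]$ gives $\R\Gamma_Z(Y[m])\in\Dg{0}(R)$, and shifting back yields $\R\Gamma_Z Y\in\Dg{m}(R)$.

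Next I would exploit (2). Since $\h^i(\tleq{m}{}Y)=\h^i(Y)$ for $i\le m$ and vanishes otherwise, condition (2) says that every cohomology of $\tleq{m}{}Y$ is supported in $Z$. Theorem~\ref{suph} then gives a canonical isomorphism $\R\Gamma_Z \tleq{m}{}Y \iso \tleq{m}{}Y$. Applying the triangulated functor $\R\Gamma_Z$ to the canonical truncation triangle $\tleq{m}{}Y \to Y \to \tg{m}{}Y \overset{+}{\to}$ therefore produces a distinguished triangle
\[
 \tleq{m}{}Y \lto \R\Gamma_Z Y \lto \R\Gamma_Z \tg{m}{}Y \overset{+}{\lto}.
\]

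To conclude, I would note that $\R\Gamma_Z$ preserves $\Dg{m}(R)$: using a K-injective resolution in degrees $>m$ (or, equivalently, computing via the defining direct limit $\R\Gamma_Z=\dirlim{}\R\Hom_R(R/\ia_\alpha,-)$ with a projective resolution of $R/\ia_\alpha$ in nonpositive degrees), each term $\R\Hom_R(R/\ia_\alpha,\tg{m}{}Y)$ lives in $\Dg{m}(R)$, and direct limits are exact. Hence both $\R\Gamma_Z Y$ and $\R\Gamma_Z \tg{m}{}Y$ lie in $\Dg{m}(R)$, so the long exact sequence of the triangle above forces $\h^i(\tleq{m}{}Y)=0$ for every $i\le m$; together with $\h^i(\tleq{m}{}Y)=0$ for $i>m$ this gives $\tleq{m}{}Y=0$, i.e.\ $Y\iso \tg{m}{}Y\in\Dg{m}(R)$.

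The only mildly technical point is the stability of $\Dg{m}(R)$ under $\R\Gamma_Z$, which is the sort of bookkeeping that could, in principle, go wrong if one is careless about resolutions. Everything else is a direct assembly of results already established in the preceding sections (Lemma~\ref{caract}, Proposition~\ref{prop-Two-ideals}, Corollary~\ref{torsion}, Theorem~\ref{suph}), so I do not anticipate a serious obstacle.
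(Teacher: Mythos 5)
Your proposal is correct and follows essentially the same route as the paper: condition (1) together with Lemma~\ref{caract} and Corollary~\ref{torsion} gives $\R\Gamma_Z Y\in\Dg{m}(R)$, condition (2) together with Theorem~\ref{suph} identifies $\tleq{m}{}Y$ with $\R\Gamma_Z\tleq{m}{}Y$, and then the fact that $\R\Gamma_Z$ preserves right-bounded-below complexes applied to the standard truncation triangle yields the vanishing of $\h^i(\tleq{m}{}Y)$ for $i\le m$. The only superficial difference is that the paper phrases the last step as the induced map $\h^{-i}(\R\Gamma_Z\tleq{0}{}Y)\to\h^{-i}(\R\Gamma_Z Y)$ being an isomorphism for $i\ge 0$, whereas you read off the same vanishing from the long exact sequence by placing the third vertex $\R\Gamma_Z\tg{m}{}Y$ in $\Dg{m}(R)$; these are the same calculation.
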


\begin{proof}
It is enough to deal with the case $m=0$. Let $Z:=\V(\ia)$. By Theorem~\ref{suph} the hypothesis $(2)$ is equivalent to assuming that the canonical map 
  $
   \R \Gamma_Z  \,\tau^{\leq 0}Y \to \tau^{\leq 0}Y
  $
is an isomorphism. Furthermore, by Lemma~\ref{caract} and Corollary~\ref{torsion}, hypothesis $(1)$ implies that $\R\Gamma_Z Y \in\Dg{0}(R)$. Bearing in mind that $\R \Gamma_Z \Dgeq{0}(R)\subset \Dgeq{0}(R),$ it follows that the canonical map $\tau^{\leq 0}Y \to Y$ induces isomorphims
  $
   \h^{-i} (\R \Gamma_Z \,\tau^{\leq 0}Y) \to
   \h^{-i} (\R \Gamma_Z Y),
  $
for all $i\geq 0$
; and so 
  $
   \h^{-i}  (\tau^{\leq 0}Y) \osi \h^{-i} (\R \Gamma_Z
   \,\tau^{\leq 0}Y) \iso \h^{-i} (\R \Gamma_Z Y)=0
  $.
\end{proof}

\begin{lem}
\label{killed-by-the-maximal} 
Assume $R$ is local with maximal ideal $\im \subset R$. If $Y\in \D(R)$ is a complex such that
$\supp(\h^j(Y))\subset\{\im \}$, for all $j\in\ZZ$, then for any $X\in \Dc^-(R)$ the following are equivalent:
  \begin{enumerate}
     \item For all $i\geq 0$, $\Hom_{\D(R)}(X [i], Y)=0$.
     \item There is $n\in\ZZ\cup\{-\infty\}$, such that
           $X\in \Dleq{n}(R)$ and
           $Y\in \Dg{n}(R)$.
  \end{enumerate}
\end{lem}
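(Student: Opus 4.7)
The direction $(2)\Rightarrow(1)$ is immediate from the canonical $t$-structure: if $X\in\Dleq{n}(R)$ and $i\geq 0$, then $X[i]\in\Dleq{n-i}(R)\subset\Dleq{n}(R)$, so orthogonality against $Y\in\Dg{n}(R)$ forces $\Hom_{\D(R)}(X[i],Y)=0$. For $(1)\Rightarrow(2)$, the degenerate cases are easy: if $X\cong 0$, take $n=-\infty$, and if $Y\cong 0$, take $n:=n_X:=\sup\{j\in\ZZ:\h^j(X)\neq 0\}$, which is a finite integer since $X\neq 0$ lies in $\Dc^-(R)$. Assume from now on that both $X,Y$ are nonzero; I plan to show that $n=n_X$ works, equivalently that $\h^j(Y)=0$ for all $j\leq n_X$. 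Argue by contradiction: suppose some $j_0\leq n_X$ satisfies $\h^{j_0}(Y)\neq 0$, set $i:=n_X-j_0\geq 0$, and aim to produce a nonzero morphism $X[i]\to Y$ in $\D(R)$.

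The truncation triangle $\tleq{j_0}{}Y\to Y\to\tg{j_0}{}Y\xto{+}$ together with the canonical $t$-structure vanishing (since $X[i]\in\Dleq{j_0}(R)$ while $\tg{j_0}{}Y$ and $\tg{j_0}{}Y[-1]$ both lie in $\Dg{j_0}(R)$) yields an isomorphism $\Hom_{\D(R)}(X[i],Y)\cong\Hom_{\D(R)}(X[i],\tleq{j_0}{}Y)$, reducing the task to producing a nonzero morphism $X[i]\to\tleq{j_0}{}Y$. The candidate comes from a \emph{socle argument}: $\h^{n_X}(X)/\im\,\h^{n_X}(X)$ is a nonzero $R/\im$-vector space by Nakayama's lemma, and the socle $(0:_{\h^{j_0}(Y)}\im)$ of the nonzero $\im$-torsion module $\h^{j_0}(Y)$ is nonzero---given $0\neq y\in\h^{j_0}(Y)$ with $k\geq 1$ minimal such that $\im^ky=0$, any nonzero element of $\im^{k-1}y$ is annihilated by $\im$. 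Therefore $\Hom_R(\h^{n_X}(X),\h^{j_0}(Y))\neq 0$, and a nonzero $\phi$ in this group, combined with the canonical truncation $X[i]\to\h^{n_X}(X)[-j_0]$ and the identification $\Hom_R(\h^{n_X}(X),\h^{j_0}(Y))\iso\Hom_{\D(R)}(X[i],\h^{j_0}(Y)[-j_0])$ (an instance of the $t$-structure orthogonality applied to $\tl{n_X}{}X[i]\in\Dleq{j_0-1}(R)$), gives a nonzero morphism $X[i]\to\h^{j_0}(Y)[-j_0]$ in $\D(R)$.

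The remaining step is to lift this morphism along the canonical truncation $\tleq{j_0}{}Y\to\h^{j_0}(Y)[-j_0]$ to a nonzero morphism $X[i]\to\tleq{j_0}{}Y$. The lift is automatic when $j_0$ is chosen minimal in $\{j\leq n_X:\h^j(Y)\neq 0\}$, because then $\tleq{j_0}{}Y\cong\h^{j_0}(Y)[-j_0]$; this subcase suffices whenever $Y$ has only finitely many nonzero cohomologies in degrees $\leq n_X$ (equivalently, the minimum exists). The main anticipated obstacle is the remaining case in which $Y$ has cohomology in arbitrarily low degrees; there the lift will be built via a degreewise finitely generated free resolution $P\to X[i]$ concentrated in degrees $\leq j_0$ together with a K-injective resolution of $\tleq{j_0}{}Y$, extending $\phi$ step by step and controlling the obstructions arising at each lower degree. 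These obstructions should be removable by invoking the full strength of hypothesis (1)---namely the vanishings $\Hom_{\D(R)}(X[i'],Y)=0$ for \emph{every} $i'\geq 0$, not just for the single value $i=n_X-j_0$---or by an iterated reduction that ultimately returns to the favorable minimal subcase.
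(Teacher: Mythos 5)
You have the right target for $(1)\Rightarrow(2)$, namely that $n=n_X:=\max\{j:\h^j(X)\neq 0\}$ works, and the degenerate cases, the reduction $\Hom(X[i],Y)\cong\Hom(X[i],\tleq{j_0}{}Y)$, and the socle construction of a nonzero map $X[i]\to\h^{j_0}(Y)[-j_0]$ are all correct. But the proof is not complete: the final lifting step along $\tleq{j_0}{}Y\to\h^{j_0}(Y)[-j_0]$ is exactly where the argument must succeed, and you only sketch a plan for it. The obstruction lives in $\Hom_{\D(R)}(X[i],\tl{j_0}{}Y[1])$, and both $X[i]$ and $\tl{j_0}{}Y[1]$ lie in $\Dleq{j_0}(R)$, so there is no $t$-structure orthogonality to force it to vanish; when $Y$ is unbounded below there is also no minimal $j_0$ to fall back on. You acknowledge this openly (``anticipated obstacle,'' ``should be removable''), which is honest, but as written the case $Y\notin\D^{+}(R)$ is unresolved, and the lemma is invoked in the paper (in Lemma~\ref{auxiliar2} and beyond) precisely without any lower bound on $Y$.

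The paper sidesteps this difficulty entirely by a change of rings. Applying Proposition~\ref{rigid} to get $\Hom_{\D(R)}(X\otimes^{\LL}_R R/\im[i],Y)=0$, transferring via the adjunction \eqref{ecuacion-0.4} to $\Hom_{\D(R/\im)}(X\otimes^{\LL}_R R/\im[i],\x{j}_\im Y)=0$, and then using that over the field $R/\im$ every complex is (quasi-isomorphic to) the direct sum of its shifted cohomologies, one sees that $R/\im[-n_X]$ is a direct summand of $X\otimes^{\LL}_R R/\im$ (Nakayama guarantees $\h^{n_X}(X)\otimes R/\im\neq 0$). This yields $\x{j}_\im Y\in\Dg{n_X}(R/\im)$ with no lifting issue at all, and Lemma~\ref{corrector} --- which is where the hypothesis $\supp(\h^j(Y))\subset\{\im\}$ is actually used, via Theorem~\ref{suph} and Corollary~\ref{torsion} --- converts this into $Y\in\Dg{n_X}(R)$. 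The moral is that the splitting available over the residue field is the mechanism that replaces your obstruction analysis; you should reach for $-\otimes^{\LL}_R R/\im$ rather than try to lift degree by degree against an unbounded-below $Y$.
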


\begin{proof}
The implication $(2)\Longrightarrow (1)$ is trivial. Assume $(1)$ for a non acyclic complex $X\in \Dc^-(R)$.  
Due to Proposition~\ref{rigid} we have that, for all $i\geq 0$,
\[
\Hom_{\D(R)}(X\otimes_R^{\LL} {R}/\im [i],Y)=0.\]
Let $n:=\max\{j\in\ZZ\,;\,\, \h^j(X )\neq 0\},$ then $\h^n(X\otimes_R^{\LL{}}{R}/{\im  })\cong \h^n(X )\otimes_R{R}/{\im}\neq 0$ by Nakayama's lemma. Since $X\otimes_R^{\LL{}}{R}/{\im}$ is a complex of ${R}/{\im}$-vector spaces we get
  \[
   \Hom_{\D(R/\im)}(X\otimes_R^{\LL{}}
   {R}/{\im} [i],\x{j}_{\im} Y )\cong
   \Hom_{\D(R)}(X\otimes_R^{\LL{}} {R}/{\im}[i],Y)=0,
  \]
for all $i\geq 0$. Notice that $\h^n(X\otimes_R^{\LL{}}{R}/{\im  })[-n]$ is isomorphic in $\D(R/\im)$ to a direct summand of $X\otimes_R^{\LL{}}{R}/{\im}$, therefore
  \[
   0=\Hom_{\D(R/\im)}(\h^n(X\otimes_R^{\LL{}}{R}/{\im  })[-n+i],\x{j}_{\im} Y ),
  \]
for any $i\geq 0$, which in turn implies that
  \[
   0=\Hom_{\D(R/\im)}({R}/{\im  }[i],\x{j}_{\im} Y),
  \]
for all $i\geq -n$. That is $\x{j}_{\im} Y \in \Dg{n}(R/\im)$. Now from Lemma~\ref{corrector} we can conclude that $Y\in \Dg{n}(R)$.
\end{proof}

\begin{cosa}
\label{notation-top-index}
Let us fix the convention that $\max(\varnothing) = \min (\varnothing)=-\infty$ for the empty subset  $\varnothing \subset \ZZ$. Then for $\Xdot\in \Dcmenos(R)$ and $\ip\in \spec(R)$ the following are well-defined elements in the set $\ZZ\cup\{-\infty\}$
  \begin{align*}
     m_\ip (\Xdot):=  & \max\{j\in\ZZ\,;\,\,
     \ip\in \supp(\h^j(\Xdot ))\}\\
     h_\ip (\Xdot):=  & \max\{j\in\ZZ
     \,;\,\,\supp(\h^j(\Xdot\otimes_R^{\LL}{R}/{\ip}))=\spec(R/\ip)\}.
  \end{align*}
\end{cosa}

\begin{lem}
\label{top-index}
For any $\Xdot\in \Dcmenos(R)$ and $\ip\in \spec(R)$, it holds that $m_\ip(\Xdot)= h_\ip(\Xdot)$.
\end{lem}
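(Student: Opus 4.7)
The plan is to reduce everything to the local ring $R_\ip$ and then apply a Nakayama argument exactly as in the proof of Lemma~\ref{killed-by-the-maximal}. First I would observe that both quantities $m_\ip(\Xdot)$ and $h_\ip(\Xdot)$ are invariant under localization at $\ip$. For $m_\ip$ this is immediate from the identity $\h^j(\Xdot)_\ip = \h^j(\Xdot_\ip)$, so
\[
m_\ip(\Xdot) = \max\{j \in \ZZ \,;\, \h^j(\Xdot_\ip) \neq 0\}.
\]
For $h_\ip$, I would use that $(-)_\ip$ is exact and commutes with derived tensor product, so that
\[
(\Xdot \otimes_R^{\LL} R/\ip)_\ip \cong \Xdot_\ip \otimes_{R_\ip}^{\LL} k(\ip),
\]
together with the fact that an $R/\ip$-module $M$ satisfies $\supp(M) = \spec(R/\ip)$ if and only if its localization $M_\ip$ at the generic point of $\spec(R/\ip)$ is nonzero. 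Consequently,
\[
h_\ip(\Xdot) = \max\{j \in \ZZ \,;\, \h^j(\Xdot_\ip \otimes_{R_\ip}^{\LL} k(\ip)) \neq 0\}.
\]

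Having reduced to the local situation, set $n := m_\ip(\Xdot) = \max\{j \,;\, \h^j(\Xdot_\ip) \neq 0\}$ (with the convention $n = -\infty$ when $\Xdot_\ip$ is acyclic, in which case both invariants are $-\infty$ and there is nothing to prove). Since $\Xdot_\ip \in \Dcmenos(R_\ip)$ is bounded above with finitely generated homology, I would use the standard spectral sequence / truncation argument: tensoring the canonical triangle $\tleq{n-1}{}\Xdot_\ip \to \Xdot_\ip \to \h^n(\Xdot_\ip)[-n] \to $ with $k(\ip)$ shows that $\h^j(\Xdot_\ip \otimes_{R_\ip}^{\LL} k(\ip)) = 0$ for $j > n$, and that
\[
\h^n(\Xdot_\ip \otimes_{R_\ip}^{\LL} k(\ip)) \cong \h^n(\Xdot_\ip) \otimes_{R_\ip} k(\ip).
\]
The latter is nonzero by Nakayama's lemma applied to the finitely generated nonzero $R_\ip$-module $\h^n(\Xdot_\ip)$. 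This gives $h_\ip(\Xdot) = n = m_\ip(\Xdot)$.

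The only mildly delicate point is the identification $(\Xdot \otimes_R^{\LL} R/\ip)_\ip \cong \Xdot_\ip \otimes_{R_\ip}^{\LL} k(\ip)$, which follows from flatness of $R_\ip$ over $R$ combined with $(R/\ip)_\ip = k(\ip)$; once this is in place, the remainder is a direct application of Nakayama on $R_\ip$. No serious obstacle arises beyond bookkeeping the boundary case where $\Xdot_\ip$ is acyclic, handled uniformly by the $-\infty$ convention from \ref{notation-top-index}.
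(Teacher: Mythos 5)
Your proof is correct and follows essentially the same route as the paper: both reinterpret $h_\ip$ as the top nonvanishing degree of $\Xdot_\ip\otimes^{\LL}_{R_\ip}k(\ip)$ via the isomorphism $(\Xdot\otimes_R^{\LL}R/\ip)\otimes_{R/\ip}^{\LL}k(\ip)\cong \Xdot_\ip\otimes_{R_\ip}^{\LL}k(\ip)$, and then apply Nakayama to $\h^m(\Xdot_\ip)$ to force equality with $m_\ip$. The only cosmetic difference is that you make the truncation triangle explicit to identify $\h^m(\Xdot_\ip\otimes^{\LL}k(\ip))$ with $\h^m(\Xdot_\ip)\otimes k(\ip)$, whereas the paper states this top-degree isomorphism directly.
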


\begin{proof} 
Let us put $m=m_\ip (\Xdot)$ and $h=h_\ip (\Xdot)$. From the canonical isomorphisms
  \[
   (X\otimes_R^{\LL}R_\ip)\otimes_{R_\ip}^{\LL}k(\ip)\cong
   X\otimes_R^{\LL}k(\ip)\cong
   (X\otimes_R^{\LL}{R}/{\ip})\otimes_{R/\ip}^{\LL}k(\ip),
  \]
it follows that for any integer $j\in\ZZ$ such that $\Xdot_\ip\in\Dleq{j}(R_\ip)$ necessary $(\Xdot\otimes_R^{\LL}{R}/{\ip})\otimes_{R/\ip}^{\LL}k(\ip)\in\Dleq{j}(k(\ip))$. Then having in mind that $m$ and $h$ can be computed as $m=\min\{j\in\ZZ\,;\,\, \Xdot_\ip\in\Dleq{j}(R_\ip)\}$ and $h=\min\{j\in\ZZ\,;\,\, (\Xdot\otimes_R^{\LL}{R}/{\ip})\otimes_{R/\ip}^{\LL}k(\ip)\in\Dleq{j}(k(\ip)\}$ we get that $h\leq m$.

Trivially $h=m=-\infty$ when $\ip\not\in\bigcup_{i\in\ZZ}\supp(\h^i(X ))$. Assume that $\ip\in\bigcup_{i\in\ZZ}\supp(\h^i(X ))$. Then $X\otimes_R^{\LL}R_\ip\in \Dleq{m}(R_\ip)$ and $\h^m(X\otimes_R^{\LL} R_\ip)\cong \h^m(X )\otimes_R R_\ip\neq 0$. Hence $(X\otimes_R^{\LL}R_\ip)\otimes_{R_\ip}^{\LL}k(\ip)\in \Dleq{m}(k(\ip))$ and
  \[
   \h^m((X\otimes_R^{\LL}R_\ip)\otimes_{R_\ip}^{\LL}k(\ip))\cong
   \h^m(X\otimes_R^{\LL}R_\ip)\otimes_{R_\ip}k(\ip)
  \]
By Nakayama's lemma, the module $\h^m(X\otimes_R^{\LL}R_\ip)\otimes_{R_\ip}k(\ip)$ is nonzero, so $\h^m((X\otimes_R^{\LL}{R}/{\ip})\otimes_{R/\ip}^{\LL}k(\ip))$ is nonzero, hence $m= h$.
\end{proof}

\begin{lem}
\label{auxiliar2}
Let $R$ be a  commutative Noetherian  integral domain. Let $X\in \Dcmenos(R)$ be a complex and $0\in \spec(R)$ be the generic point. With the notation in \ref{notation-top-index}, $m_{0}(X)=\max\{ i\in\ZZ\,\,;\,\,\supp(\h^i(X))=\spec(R)\}$. If $Y\in \D(R)$ satisfies the following conditions:
  \begin{enumerate}
    \item $\Hom_{\D(R)}(X ,Y [i])=0$, for all $i\leq 0$, and
    \item for every $0\neq \ip\in\spec(R)$,
          $\rhomdot_R(R/\ip,Y)\in \Dg{m_0}(R)$,
  \end{enumerate}
then $Y\in \Dg{m_0}(R)$.
\end{lem}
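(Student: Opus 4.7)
The plan is to control $Y$ separately at its $Z$-torsion and its generic parts, where $Z=\spec(R)\setminus\{0\}$ consists of the non-zero primes of the domain $R$ and is stable under specialization.

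First, hypothesis~(2) combined with Lemma~\ref{caract} reads as $Y\in\aisle{R/\ip[-m_0]}^{\perp}$ for every non-zero prime $\ip$. Using Corollary~\ref{corimportante2} together with items (3) and (4) of Proposition~\ref{prop-Two-ideals}, this extends to $Y\in\aisle{R/\ia[-m_0]}^{\perp}$ for every non-zero ideal $\ia\subset R$. Since $\Gamma_Z(-)=\dirlim_{\ia\neq 0}\Hom_R(R/\ia,-)$, the argument in the proof of Corollary~\ref{torsion}, with the index shifted by $-m_0$, gives $\h^j(\R\Gamma_Z Y)=0$ for every $j\leq m_0$, i.e.\ $\R\Gamma_Z Y\in\Dg{m_0}(R)$.

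Second, because $R$ is a domain the Gabriel topology associated to $Z$ is generated by all non-zero ideals, which identifies the Bousfield localization functor with $-\otimes_R K$, where $K$ is the field of fractions of $R$. Writing $Y_0:=Y\otimes_R K$, the localization triangle $\R\Gamma_Z Y\to Y\to Y_0\overset{+}{\to}$ together with the first step and the long exact sequence in cohomology yields $\h^j(Y)\cong\h^j(Y_0)$ for every $j<m_0$ and $\h^{m_0}(Y)\hookrightarrow\h^{m_0}(Y_0)$. It therefore suffices to prove $Y_0\in\Dg{m_0}(K)$.

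Third, I would harvest hypothesis~(1) via flat base change for derived Hom. Since $X\in\Dcmenos(R)$ is pseudo-coherent ($R$ being Noetherian) and $K$ is flat over $R$, the natural map
\[
\rhom_R(X,Y)\otimes^{\LL}_R K \lto \rhom_R(X,Y_0)\cong\rhom_K(X_0,Y_0)
\]
is an isomorphism, where the second identification is the adjunction $\LL f^{*}\dashv f_{*}$ for $f\colon R\to K$. Taking $n$-th cohomology and using (1), one obtains $\Hom_{\D(K)}(X_0,Y_0[n])=\ext^n_R(X,Y)\otimes_R K=0$ for every $n\leq 0$. Since $K$ is a field, the decomposition $X_0\cong\bigoplus_j \h^j(X_0)[-j]$ in $\D(K)$ gives
\[
\Hom_{\D(K)}(X_0,Y_0[n])\cong \prod_j \Hom_K\bigl(\h^j(X_0),\,\h^{j+n}(Y_0)\bigr).
\]
By Lemma~\ref{top-index} together with the definition of $m_0$, the vector space $\h^{m_0}(X_0)=\h^{m_0}(X)\otimes_R K$ is non-zero, so projecting onto the $j=m_0$ factor forces $\h^{m_0+n}(Y_0)=0$ for every $n\leq 0$. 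This is $Y_0\in\Dg{m_0}(K)$, and combined with the second step we conclude $Y\in\Dg{m_0}(R)$.

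The principal technical hurdle will be verifying the flat base change isomorphism in the third step: it is routine when $X$ is perfect or bounded, but for unbounded-below $X\in\Dcmenos(R)$ one has to resolve $X$ by a bounded-above complex $P$ of finitely generated free $R$-modules, verify the base change for $\Hom$ term-by-term using the finite presentation of each $P^i$ and the flatness of $K$, and then pass to cohomology using that flat localization preserves cohomology. Everything else in the argument is formal manipulation of the Bousfield localization triangle and linear algebra over the field $K$.
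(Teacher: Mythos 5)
Your steps 1 and 2 follow the paper's route and are fine; the gap is in step 3, and it is exactly the point the paper's proof is organized to circumvent. You invoke the flat base change isomorphism
\[
\rhom_R(X,Y)\otimes^{\LL}_R K \;\liso\; \rhom_K(X_0,Y_0),
\]
equivalently $\ext^n_R(X,Y)\otimes_R K\cong\Hom_{\D(K)}(X_0,Y_0[n])$. For $X\in\Dcmenos(R)$ pseudo-coherent but not perfect, this is \emph{not} true for arbitrary $Y$: one also needs $Y$ bounded below. You flag a ``hurdle'' but misdiagnose where it sits. Resolving $X$ by a bounded-above complex $P$ of finitely generated free modules, the degree-$n$ term of the total-Hom complex is $\prod_{j-i=n}\Hom_R(P^i,Y^j)$, and when $Y$ is unbounded below this product is infinite; while each factor localizes correctly (finite presentation of $P^i$ plus flatness of $K$), the functor $-\otimes_R K$ does not commute with infinite products (e.g.\ $\QQ\otimes_{\ZZ}\prod_n\ZZ\not\cong\prod_n\QQ$), so the comparison map need not be an isomorphism. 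At the point where you apply it you do not yet know that $Y$ or $Y_0$ is bounded below, so step 3 does not go through.

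The paper inserts precisely the missing intermediate step. It first establishes boundedness by a cruder argument that avoids base change: pick $n$ with $X\in\Dleq{n}(R)$, apply $\Hom_{\D(R)}(X,-)$ to the localization triangle $\R\Gamma_Z Y\to Y\to Y_0\overset{+}{\to}$, and use hypothesis (1) together with $\R\Gamma_Z Y\in\Dg{m_0}(R)$ to get $\Hom_{\D(R)}(X,Y_0[i])=0$ for all $i$ in a suitable negative range. The identification $\Hom_{\D(K)}(X_0,Y_0[i])\cong\Hom_{\D(R)}(X,Y_0[i])$ uses only the adjunction $\LL f^*\dashv f_*$ for $f\colon R\to K$ and is unconditional, so Lemma~\ref{killed-by-the-maximal} over the field $K$ then yields $Y_0\in\Dmas{}(K)$, hence $Y\in\Dmas{}(R)$ via the localization triangle. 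Only \emph{then}, with $Y$ bounded below and $X$ bounded above, is the flat base change valid, and one reruns the same computation to sharpen the bound to $Y_0\in\Dg{m_0}(K)$ and conclude. You should interpolate this preliminary bounded-below argument before your step 3; with that inserted, the rest of your write-up is correct and matches the paper.
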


\begin{proof} Let $K$ be the field of fractions of $R$ and let us set $Z:=\spec(R)\setminus \{0\}$ and $m:=m_{0}(X)$. Let us study the non trivial case, so assume that $m\in\ZZ$. The complex $X$ belongs to $\Dcleq{n}(R)$ for an integer $n\geq m$. By Proposition~\ref{truncacionparaunideal}, the hypothesis $(2)$ on $\Ydot$ is equivalent to saying  $\R\Gamma Y \in \Dg{m}(R)$, where $\Gamma:=\Gamma_Z \colon \Mod(R)\to \Mod(R)$ is the usual torsion radical. Applying the homological functor $\Hom(\Xdot,-):=\Hom_{\D(R)}(\Xdot,-)$ to the canonical triangle
  \begin{equation}
  	\label{llamar}
   \R\Gamma \Ydot \lto Y \lto 
   Y\otimes^{\LL}_R K\stackrel{+}{\lto}
  \end{equation}
we get exact sequences
  \[
   \Hom(\Xdot,Y[i-1]) \rightarrow \Hom(\Xdot,Y\otimes^{\LL}_R
   K[i-1]) \rightarrow \Hom(\Xdot,\R\Gamma \Ydot [i])
  \]
for all $i\in \ZZ$. Notice that $\Xdot\in\Dleq{n}(R)$, $\R\Gamma \Ydot [m-n]\in\Dg{n}(R)$ and $Y[-1]\in \aisle{\Xdot}^\perp$, then we get $0=\Hom_{\D(R)}(\Xdot,Y\otimes_R K[i-1])$ for $i\leq m-n \leq 0$. As a consequence
  \[
   \Hom_{\D(K)}(\Xdot \otimes_R K ,Y\otimes_R K[i-1])\cong
   \Hom_{\D(R)}(\Xdot,Y\otimes_R K[i-1])=0
  \]
for all $i\leq m-n$. Recall that $\Xdot \otimes_R K\in \Dcleq{m}(K)$ and $\h^{m}(\Xdot \otimes_R K)\neq 0$, therefore ${Y}\otimes_R K[m-n]\in\Dg{m} (K)$ since $K$ is a field and so Lemma~\ref{killed-by-the-maximal}  applies here. Then we conclude that  $Y\otimes_R K\in\Dmas{}(R)$. Therefore $Y\in\Dmas{}(R)$ by the existence of the distinguished triangle (\ref{llamar}). From this fact we are going to prove a more precise homological bound for $Y\otimes_R K$, namely $Y\otimes_R K\in \Dg{m}(K)$. Indeed, for all $i\in\ZZ$ there is a canonical isomorphism
  \[
   \Hom_{\D(K)}(X\otimes_R K,Y\otimes_R K[i])\cong
   \Hom_{\D(R)}(X ,Y [i])\otimes_R K,
  \]
since $Y\in\Dmas{}(R)$ and $\Xdot\in\Dcmenos(R)$. 
Then, by adjunction, hypothesis $(1)$ and Proposition~\ref{rigid}
\[
   \Hom_{\D(K)}(X\otimes_R K,Y\otimes_R K[i])\cong
   \Hom_{\D(R)}(X ,Y \otimes_R K[i]) =0,
  \]
for all $i\leq 0$. Hence $Y\otimes_R K\in \Dg{m}(K)$ by Lemma~\ref{killed-by-the-maximal}.  
Using once again the distinguished triangle (\ref{llamar}) we conclude $Y\in \Dg{m}(R)$ as desired.
\end{proof}

\begin{lem}
\label{all-are-geometric1}
Let $R$ be a commutative Noetherian ring. Let $X\in \Dcmenos (R)$ and $Y\in \D(R)$. If $\Hom_{\D(R)}(X ,Y [i])=0$ for all $i\leq 0$, then
  \[
   \Hom_{\D(R)}({R}/{\ip}[-k],Y [i])=0
  \]
for all $i\leq 0$, $k\in\ZZ$ and any $\ip\in \supp(\h^k(X ))$.
\end{lem}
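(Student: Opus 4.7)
Rewriting the desired vanishing as $\rhom_R(R/\ip,Y)\in\Dg{k}(R)$, and using that $k\leq m_\ip(X)$ and $\Dg{m_\ip(X)}(R)\subset\Dg{k}(R)$, I would reduce the statement to the stronger one:
\emph{for every $\ip\in\supp(X):=\bigcup_{j}\supp(\h^j(X))$, one has $\rhom_R(R/\ip,Y)\in\Dg{m_\ip(X)}(R)$.}
This uniform formulation is what allows an induction to close, since the target bound is now an intrinsic invariant of $\ip$ and $X$.

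\textbf{Noetherian induction.} I would then argue by Noetherian induction on the closed subset $\V(\ip)\subset\spec(R)$. Fix $\ip\in\supp(X)$, write $m=m_\ip(X)$, and assume the statement for every $\iq\supsetneq\ip$. Let $R':=R/\ip$ (an integral domain with generic point $0$), $X':=X\otimes^{\LL}_{R}R'\in\Dcmenos(R')$, and $Y':=\x{j_\ip}Y=\rhomdot_R(R/\ip,Y)\in\D(R')$. The idea is to apply Lemma~\ref{auxiliar2} in the ring $R'$ to $(X',Y')$; the goal $\rhom_R(R/\ip,Y)\in\Dg{m}(R)$ then follows because the forgetful functor $(j_\ip)_*\colon\D(R')\to\D(R)$ preserves cohomological positions.

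\textbf{Checking the hypotheses of Lemma~\ref{auxiliar2}.} For hypothesis~(1), Proposition~\ref{rigid} applied to $\CU=\aisle{X}$ and $M=R/\ip\in\Dleq{0}(R)$ gives $X\otimes^{\LL}_R R/\ip\in\aisle{X}$, so $\Hom_R(X',Y[i])=0$ for all $i\leq 0$; then the adjunction formula~\eqref{ecuacion-0.4} rewrites this as $\Hom_{R'}(X',Y'[i])=0$. For hypothesis~(2), the key computation is that
\[
m_0(X')\;=\;h_\ip(X)\;=\;m_\ip(X),
\]
the first equality being the definition of $h_\ip$ via $X\otimes^{\LL}_R R/\ip$ and the fact that the support of a module over $R'$ equals $\spec(R')$ exactly when its localization at $0$ is nonzero, and the second equality being Lemma~\ref{top-index}. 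Now for any nonzero prime of $R'$, written $\iq/\ip$ with $\iq\supsetneq\ip$, composition of the $\x{(-)}$ functors gives $\rhom_{R'}(R'/(\iq/\ip),Y')\simeq\rhom_R(R/\iq,Y)$. Since $\supp(\h^m(X))$ is closed under specialization and contains $\ip$, it contains $\iq$ as well, whence $m_\iq(X)\geq m$; the inductive hypothesis then places $\rhom_R(R/\iq,Y)$ in $\Dg{m_\iq(X)}(R)\subset\Dg{m}(R)$, which is hypothesis~(2) of Lemma~\ref{auxiliar2}.

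\textbf{Obstacle and wrap-up.} Lemma~\ref{auxiliar2} now yields $Y'\in\Dg{m}(R')$, which is the desired $\rhom_R(R/\ip,Y)\in\Dg{m_\ip(X)}(R)$. The Noetherian induction starts (vacuously) at maximal $\ip$, for which there are no proper specializations $\iq\supsetneq\ip$ and hypothesis~(2) is empty. The main technical point is the correct identification $m_0(X')=m_\ip(X)$ via Lemma~\ref{top-index}, together with the monotonicity $m_\iq(X)\geq m_\ip(X)$ for $\iq\supsetneq\ip$; everything else is a straightforward unpacking of adjunctions between $(j_\ip)_*$, $\LL j_\ip^*$, and $\x{j_\ip}$.
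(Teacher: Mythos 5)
Your proof is correct and follows the same overall strategy as the paper: pass to $R' = R/\ip$ via $\LL j_\ip^*\dashv (j_\ip)_*\dashv \x{j}_\ip$, verify the two hypotheses of Lemma~\ref{auxiliar2} for the pair $(X\otimes^{\LL}_R R/\ip,\ \rhomdot_R(R/\ip,Y))$, and close the argument by a Noetherian-maximality device. The paper phrases the latter as a \emph{reductio ad absurdum}: fix $k$, let $\CS$ be the set of $\ip\in\supp(\h^k(X))$ where the conclusion fails, and pick a maximal $\ip_0\in\CS$; your Noetherian induction on $\V(\ip)$ is logically the same mechanism.

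The one genuine difference is your choice of target bound, and it is a real improvement in bookkeeping. The paper works with the fixed shift $k$ throughout, so when it verifies hypothesis~(2) of Lemma~\ref{auxiliar2} for $(X_0,Y_0)$ over $R/\ip_0$ it only establishes $\rhomdot_{R/\ip_0}(R/\iq, Y_0)\in\Dg{k}(R/\ip_0)$, whereas the lemma as stated asks for $\Dg{m_0(X_0)}$ with $m_0(X_0)=m_{\ip_0}(X)\geq k$; the application therefore implicitly relies on the (true, but unstated) variant of Lemma~\ref{auxiliar2} in which both the hypothesis on proper primes and the conclusion are at any level $k\leq m_0$. By strengthening the induction hypothesis to the intrinsic bound $\rhom_R(R/\ip,Y)\in\Dg{m_\ip(X)}(R)$ and using the monotonicity $m_\iq(X)\geq m_\ip(X)$ for $\iq\supsetneq\ip$ (via specialization-closedness of $\supp(\h^{m_\ip}(X))$, combined with Lemma~\ref{top-index}), you match the literal statement of Lemma~\ref{auxiliar2} on the nose and avoid this slippage. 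The original statement for a given $k$ and $\ip\in\supp(\h^k(X))$ then follows because $k\leq m_\ip(X)$, exactly as you observe. So: same route, but your uniform reformulation is the cleaner way to run it.
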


\begin{proof}
As a consequence of Corollary~\ref{resumen}, it follows from the hypothesis that $\Hom_{\D(R)}(X\otimes_R^{\LL}{R}/{\ip} ,Y [i])=0$ for any $i\leq 0$, and  
  \[
   \Hom_{\D(R/\ip)}(X\otimes_R^{\LL}{R}/{\ip},
   \rhomdot_R({R}/{\ip},Y)[i])=0
  \]
for all $i\leq 0$ and every $\ip\in\spec(R)$.

Let us fix an integer $k\in\ZZ$ such that $\h^k(X )\neq 0$ and take any $\ip\in \supp(\h^k(X ))$. Then we have $k\leq m_\ip =h_\ip$, where $m_\ip=m_\ip(\Xdot)$ and $h_\ip=h_\ip(\Xdot)$ (see Lemma~\ref{top-index}).

We proceed by \emph{reductio ad absurdum} assuming that the set
\[
\CS:=\{\ip\in\spec (R) \,;\,\,\ip\in\supp(\h^k(X )) \text{ and } \Ydot\notin \aisle{R/\ip[-k]}^\perp \}
\]
is nonempty. The ring $R$ is Noetherian so we can choose a maximal element $\ip_0$ in $\CS$. Note that for $\Xdot_0:=X\otimes_R^{\LL}{R}/{\ip_0}\in\Dcmenos(R/\ip_0)$ the complex $\Ydot_0:=\rhomdot_R({R}/{\ip_0},Y)\in\D(R/\ip_0)$ satisfies the following properties:
  \begin{enumerate}
    \item $\Hom_{\D({R}/{\ip_0})}(\Xdot_0,\Ydot_0[i])=0$,
          for all $i\leq 0$ (see the remark at the beginning of the proof).
    \item If $\iq\in\spec(R)$ is such that $\ip_0\subsetneq \iq$, then
          $\iq\in \supp(\h^k(\Xdot))$ and
          $\iq\notin  \CS$ since $\ip_0$ is maximal
          in $\CS$. Therefore $\Ydot\in \aisle{R/\iq[-k]}^\perp$.
          This fact amounts to saying that
          $\rhomdot_{R/\ip_0}(R/\iq,\Ydot_0)\in \Dg{k}(R/\ip_0)$,
          since
          $\Hom_{\D(R/{\ip_0})}(R/\iq,Y_0[i])
          \cong\Hom_{\D(R)}(R/\iq,Y[i])$.
\end{enumerate}  
Then Lemma~\ref{auxiliar2} shows that $\Ydot_0\in\Dg{k}(R/\ip_0)$, 
so $\Hom_{\D(R)}({R}/{\ip_0}[i],Y) = 0$ for all $i\geq -k$. This fact contradicts the assumption  $\ip_0\in\CS$, then necessarily $\CS=\varnothing$, and the result follows.
\end{proof}

We are now ready to prove a key result in this section.

\begin{prop}
\label{4.7} 
Let $R$ be a commutative Noetherian ring. For $X \in \D^-_{\tf}(R)$ and $Y\in \D(R)$, the following are equivalent:
   \begin{enumerate}
      \item $\Hom_{\D(R)}(X, Y[i])=0,$
            for all $i\leq 0$;
      \item $\Hom_{\D(R)}(\h^j(X)[-j], Y[i])=0,$
            for any $j\in\ZZ$ and $i\leq 0$;
      \item $\Hom_{\D(R)}(R/\ip  [-j], Y[i])=0,$
            for all $j\in\ZZ$, $i\leq 0$ and all prime ideals $\ip$
            (minimal) in $\supp(\h^j(X ))$;
      \item $\Hom_{\D(R)}(R/\ip  [-j], Y[i])=0,$
            for every $j\in\ZZ$, $i\leq 0$ and all prime ideals $\ip$
            (minimal) in $\ass (\h^j(X ))$.
   \end{enumerate}
\end{prop}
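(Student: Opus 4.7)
I will prove the cycle $(1) \Rightarrow (3) \Rightarrow (2) \Rightarrow (1)$, together with $(3) \Leftrightarrow (4)$ and the invariance of each condition on restricting to minimal primes. Put $\CF := \{Z \in \D(R) \,;\,\, \Hom_{\D(R)}(Z, Y[i]) = 0 \text{ for all } i \leq 0\}$. Since $\Hom_{\D(R)}(Z[m], Y[i]) = \Hom_{\D(R)}(Z, Y[i-m])$ vanishes whenever $Z \in \CF$, $m \geq 0$, and $i \leq 0$, the class $\CF$ is closed under positive shifts; it is also trivially closed under extensions and arbitrary coproducts, so it is a cocomplete pre-aisle of $\D(R)$.

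The implication $(1) \Rightarrow (3)$ is Lemma~\ref{all-are-geometric1} applied to $X$. For $(3) \Rightarrow (2)$, Proposition~\ref{truncacionparaunideal} identifies $\h^j(X)[-j] \in \cu{j}{\supp(\h^j(X))} = \aisle{R/\ip[-j]\,;\,\,\ip\in\supp(\h^j(X))}$; since (3) places the generators of this aisle inside $\CF$ and $\CF$ is a cocomplete pre-aisle, the entire aisle lies in $\CF$, whence $\h^j(X)[-j] \in \CF$. The equivalence $(3) \Leftrightarrow (4)$, and the invariance under restricting to minimal primes, follow from $\ass(\h^j(X)) \subset \supp(\h^j(X))$ sharing the same minimal elements (the minimal primes over $\ann(\h^j(X))$), combined with Corollary~\ref{corimportante2}, which shows the aisle generated by $\{R/\ip\,;\,\,\ip \in \supp(\h^j(X))\}$ already coincides with the one generated by its minimal-prime subset.

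The core step is $(2) \Rightarrow (1)$. Rewrite (2) as the vanishing $\Hom_{\D(R)}(\h^j(X), Y[m]) = 0$ whenever $m \leq j$. Choose a K-projective resolution $P \to X$ by a bounded-above complex of finitely generated projective $R$-modules, say with $P^r = 0$ for $r > n$. The column filtration of $\Hom^{\cdot}(P, Y)$ produces the hyper-Ext spectral sequence
\[
E_2^{p, q} = \Hom_{\D(R)}(\h^{-q}(X), Y[p]) \Longrightarrow \Hom_{\D(R)}(X, Y[p+q]),
\]
with $E_2^{p, q} = 0$ for $q < -n$; this filtration is automatically exhaustive, separated, and complete because $\Hom^{r}(P, Y) = \prod_{j \leq n} \Hom_R(P^j, Y^{r+j})$ is by construction a product, so the spectral sequence converges strongly. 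On any antidiagonal $p + q = i \leq 0$, setting $j := -q$ and $m := p$ yields $m - j = i \leq 0$, so $E_2^{p, q} = 0$ by (2); therefore $\Hom_{\D(R)}(X, Y[i]) = 0$ for all $i \leq 0$. The main obstacle is establishing the strong convergence of this spectral sequence without a boundedness hypothesis on $Y$, which is rescued precisely by the completeness of the column filtration, itself forced by the product structure of $\Hom^{\cdot}(P, Y)$.
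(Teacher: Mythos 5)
Your overall architecture (the cycle $(1)\Rightarrow(3)\Rightarrow(2)\Rightarrow(1)$ with $(3)\Leftrightarrow(4)$ and the minimal-prime invariance on the side) is reasonable, and your steps $(1)\Rightarrow(3)$ and $(3)\Rightarrow(2)$ agree in substance with the paper, which gets $(2)\Leftrightarrow(3)$ by applying $(1)\Leftrightarrow(3)$ to $\bigoplus_j\h^j(X)[-j]$. But the step $(2)\Rightarrow(1)$, which carries all the weight, has a genuine gap. To begin with, the $E_2$ page you write down is not what the column filtration of $\Hom^\cdot(P,Y)$ produces when $P$ is just a K-projective bounded-above complex of finite projectives. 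Filtering by $P$-degree gives $E_1^{p,q}=\Hom_R(P^{-p},\h^q(Y))$, hence $E_2^{p,q}\cong\Hom_{\D(R)}(X,\h^q(Y)[p])$, which involves the cohomology of $Y$, not of $X$; the other canonical filtration gives $E_1^{p,q}\cong\Hom_{\D(R)}(X,Y^p[q])$. To obtain $E_2^{p,q}=\Hom_{\D(R)}(\h^{-q}(X),Y[p])$ you would need a Cartan--Eilenberg double-complex resolution of $X$, equivalently the spectral sequence of the Postnikov tower of $X$, and then deal with the resulting triple complex.

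More seriously, completeness of the filtration does not give strong convergence; it gives only conditional convergence in Boardman's sense. Since $X\in\Dcmenos(R)$ may have homology in all negative degrees, the spectral sequence you want lives in a half-plane ($q\geq -n$, $p$ unbounded below whenever $Y$ is unbounded below) with entering differentials, and a fixed antidiagonal $p+q=m$ can carry infinitely many nonzero terms. Vanishing of $E_2$ on a diagonal gives $E_\infty=0$ there, but concluding that the abutment vanishes in that degree requires the induced filtration on the abutment to be complete and Hausdorff, which is precisely the content of strong convergence and requires $RE_\infty=0$; it does not follow from completeness of the filtration on $\Hom^\cdot(P,Y)$. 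Put concretely: from $(2)$ one knows $\tau^{\geq -m}X\in\CF$ for every $m$ (a finite iterated extension of the $\h^j(X)[-j]$), but $X\cong\holim{m}\tau^{\geq -m}X$ is a homotopy \emph{inverse} limit, and cocomplete pre-aisles such as $\CF$ are closed under homotopy colimits, not under homotopy limits. The paper circumvents exactly this obstruction by a \emph{reductio} using the Noetherian structure: setting $\phi(i)=\bigcup_{j\geq i}\supp\h^j(X)$, it supposes $B:=\tg{}{\phi}X\neq 0$, picks $\iq$ \emph{minimal} in $\bigcup_t\supp\h^t(B)$, localizes at $\iq$, and invokes Nakayama to place $B_\iq$ in $\CU_\iq\cap\CU_\iq^\perp=0$. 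That your argument never touches localization, minimal primes, or Nakayama is itself a warning that the convergence step has been swept under the rug.
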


\begin{proof} 
The equivalence  between $(3)$ and $(4)$ follows directly  from Proposition~\ref{prop-Two-ideals}(1). 

Lemma~\ref{all-are-geometric1} is just $(1)\imp (3)$.
To show that $(3)\imp (1)$, we only need to assume here that $X \in \Dmenos{}(R).$ For simplicity let us suppose that $\Xdot\in\Dleq{0}{}(R)$.
Let $\CU=\CU_\phi$ where $\phi\colon\ZZ\to\CP(\spec(R))$ is the sp-filtration  defined,  for each $i\in\ZZ$, by setting
$\phi(i):=\cup_{j\geq i} \supp(\h^j(\Xdot))$ ({\cfr} \ref{filtrationbyporaisle}). Item $(3)$ says that $\Ydot\in\CU^\perp$. So to prove $(1)$ is enough to check that $\Xdot \in\CU$. Let us consider the canonical triangle
  \[
   \tleq{ }{\phi}  X \lto X \lto  \tg{ }{\phi}X
   \overset{+}{\lto}
  \]
and denote $N =\tleq{ }{\phi}X$ and
$B=\tg{ }{\phi}X.$ We claim that $B=0,$
equivalently the canonical map $ N \to X$ is an
isomorphism whence we get the desired result
$X\cong N\in \CU$. Note that $B\in\Dleq{0}(R)$
because $ N$ and $X$ belong to $\Dleq{0}(R).$ If
$B\neq 0,$ let us choose $\iq\in\spec(R)$ minimal in the set
$\cup_{t\leq 0}\supp(\h^t(B))$. By localizing from the above triangle we get
the distinguished triangle in $\D(R_\iq)$
\begin{equation}
\label{trianguloanterior}
N_\iq \lto X_\iq \lto  B_\iq \overset{+}{\lto}
\end{equation}
Recall from Proposition~\ref{localizacion} that $N_\iq\in \CU_\iq,$ and $B_\iq\in
({\CU}^\perp)_\iq=(\CU_\iq)^\perp$ (notation as in \ref{notacionlocalizacion}). Let $b:=\max\{j\leq 0 \,/\,
\iq\in \supp (\h^j(B))\}$ and $Z:=\{\iq R_\iq\}\subset \spec(R_\iq)$.
Then $\R \Gamma_Z(\Bdot_\iq)\cong \Bdot_\iq\in\Dleq{b}(R_\iq)$, that is  $\tleq{b}{}\R \Gamma_Z(\Bdot_\iq)\cong \Bdot_\iq$. Hence $B_\iq\in \aisle{R_\iq/\iq
R_\ip[-b]}$ as a consequence of Proposition~\ref{truncacionparaunideal}. 
If $X_\iq=0$ then $B_\iq \cong N_\iq
[1]\in \CU_\iq$, so in this case $B_\iq=0$. Suppose that
$X_\iq\neq 0,$ and set $ m:=  \max\{j\leq 0 \,\,;\,\, \iq R_\iq\in \supp(\h^j(X_\iq))\} =  \max\{j\leq 0 \,\,;\,\, \iq \in \supp (\h^j(X))\}$. Notice that then $\Xdot_\iq\in\Dleq{m}(R_\iq)$. The aisle $\CU_\iq\subset\D(R_\iq)$ is generated by the set
$\{R_\iq/\ip R_\iq[-i]\,\,;\,\,\ip \in \supp(\h^i(\Xdot)), \, i\in\ZZ\}=\{R_\iq/\ip R_\iq[-i]\,\,;\,\,\ip \in \supp(\h^i(\Xdot_\iq)), \, i\in\ZZ\}$ (see Proposition~\ref{localizacion}). Hence $\CU_\iq$ is contained in $\Dleq{m}(R_\iq)$, and so $\Ndot_\iq\in\Dleq{m}(R_\iq)$. Using the triangle (\ref{trianguloanterior}) we get that $b\leq m$. Therefore $\aisle{R_\iq / \iq R_\iq[-b]}\subset\aisle{R_\iq / \iq R_\iq[-m]}\subset \CU_\iq$. Then $\Bdot_\iq\in \CU_\iq$, and again as a result $B_\iq=0.$

Finally the equivalence between $(2)$ and $(3)$ is a consequence of $(1) \dimp (3)$ for the complex
  \[
   \bigoplus_{j\in \ZZ} \h^j(X)[-j]\in \Dc^-(R).\qedhere
  \]
\end{proof}

\begin{cor}
\label{worth}
For any $\Xdot \in \D^-_{\tf}(R)$,
\begin{align*}
\aisle{\Xdot}= & \,\,\aisle{\h^j(X)[-j] ; j\in\ZZ}\\
             = & \,\,\aisle{R / \ip[-j] ; j\in\ZZ \text{ and }\ip\in\supp \h^j(X)}.
\end{align*}
\end{cor}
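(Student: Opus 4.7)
My plan is to reduce the three identities to Proposition~\ref{4.7} by taking right orthogonals inside $\D(R)$. The underlying principle is that every aisle $\CU$ of $\D(R)$ is recoverable from its right orthogonal via $\CU = {}^{\perp}(\CU^{\perp})$, so to prove that two generated aisles coincide it suffices to show that their right orthogonals agree.

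First, I would unpack the definition recalled in \ref{aisles}: for any class $\CM \subset \D(R)$, we have $\aisle{\CM} = {}^{\perp}(\CM[\NN]^{\perp})$, and therefore
\[
  \aisle{\CM}^{\perp} = \CM[\NN]^{\perp} = \{\, Y \in \D(R) \,;\, \Hom_{\D(R)}(M, Y[j]) = 0 \text{ for every } M \in \CM,\, j \leq 0 \,\}.
\]
Applying this recipe to the three generating sets in the statement, the right orthogonal of $\aisle{\Xdot}$ is exactly the class of $Y$ satisfying condition $(1)$ of Proposition~\ref{4.7}; the right orthogonal of $\aisle{\h^j(X)[-j] \,;\, j \in \ZZ}$ is the class satisfying condition $(2)$; and the right orthogonal of $\aisle{R/\ip[-j] \,;\, j \in \ZZ,\, \ip \in \supp\h^j(X)}$ is the class satisfying condition $(3)$.

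Since $\Xdot \in \Dcmenos(R)$, Proposition~\ref{4.7} asserts that conditions $(1)$, $(2)$, $(3)$ are equivalent for every $Y \in \D(R)$. Hence the three right orthogonals coincide, and applying $^\perp(-)$ gives the equalities claimed in the corollary. I do not foresee any genuine obstacle: all the substantive content sits in Proposition~\ref{4.7}, and the corollary is merely the translation of that equivalence through the Galois correspondence between generators and orthogonal vanishing. The only point that requires a moment's care is verifying that ``$\Hom(M[i], Y) = 0$ for all $i \geq 0$'' (the literal orthogonality used to define $\aisle{\CM}$) is the same as ``$\Hom(M, Y[j]) = 0$ for all $j \leq 0$'' (the form in which Proposition~\ref{4.7} is phrased), which is immediate from the translation $[i] \dashv [-i]$.
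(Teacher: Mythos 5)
Your proof is correct and takes the same route as the paper, which simply records the corollary as ``Immediate from the above proposition.'' You have accurately spelled out the mechanism: since each $\aisle{\CM}$ is a total pre-aisle, it is determined by its right orthogonal $\aisle{\CM}^\perp = \CM[\NN]^\perp$, and the three right orthogonals in question are precisely the classes cut out by conditions $(1)$, $(2)$, $(3)$ of Proposition~\ref{4.7}, whose equivalence for $X \in \Dcmenos(R)$ is exactly what that proposition asserts.
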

\begin{proof} Immediate from the above proposition.
\end{proof}

\begin{cor}
\label{corimportante}
Let $i$ be an integer and let $Z$ be a sp-subset of $\spec(R)$. The aisle $\CU^i_Z\subset D(R)$ is compactly generated.
\end{cor}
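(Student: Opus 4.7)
The plan is to exhibit a set of compact (perfect) generators for $\cu{i}{Z}$ lying inside it. Proposition~\ref{truncacionparaunideal} already presents $\cu{i}{Z}$ as $\aisle{R/\ip[-i]\,;\,\ip\in Z}$, but stalk modules of residue rings are not in general perfect complexes. The natural fix is to replace each $R/\ip$ by a Koszul complex: this \emph{is} perfect by construction, and hence compact in $\D(R)$ by Rickard's criterion recalled in~\S\ref{compact}.

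Concretely, for each $\ip\in Z$, choose a finite generating system $\underline{x}_\ip$ of $\ip$ (possible since $R$ is Noetherian) and form the Koszul complex $K(\underline{x}_\ip)$. This is a bounded complex of finitely generated free $R$-modules concentrated in non-positive degrees; all its homologies are supported on $\V(\ip)\subset Z$; and $\h^0(K(\underline{x}_\ip))=R/\ip$. In particular $K(\underline{x}_\ip)[-i]$ is a compact object of $\D(R)$ lying in $\cu{i}{Z}$. Set $\CV:=\aisle{K(\underline{x}_\ip)[-i]\,;\,\ip\in Z}$; this is a compactly generated aisle contained in $\cu{i}{Z}$.

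For the reverse inclusion I would invoke Corollary~\ref{worth}, which for any $X\in\Dcmenos(R)$ rewrites $\aisle{X}$ as $\aisle{R/\iq[-k]\,;\,k\in\ZZ,\,\iq\in\supp\h^k(X)}$. Applied to $X=K(\underline{x}_\ip)[-i]$, taking $k=i$ and $\iq=\ip$ yields $R/\ip[-i]\in\aisle{K(\underline{x}_\ip)[-i]}\subset\CV$. Hence $\CV$ contains the generating set from Proposition~\ref{truncacionparaunideal}, so $\CV\supset\cu{i}{Z}$ and the two aisles coincide, exhibiting $\cu{i}{Z}$ as compactly generated. I do not foresee a genuine obstacle here: the whole argument is a direct application of Corollary~\ref{worth} once the Koszul resolution enters the picture, and the verification that each $K(\underline{x}_\ip)[-i]$ sits in $\cu{i}{Z}$ is immediate from the support properties of Koszul homology.
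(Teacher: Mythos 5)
Your proof is correct and follows essentially the same route as the paper's: replace $R/\ip$ by a Koszul complex on a finite generating set of $\ip$ to obtain a perfect (hence compact) object in $\cu{i}{Z}$, then use Proposition~\ref{4.7} (via Corollary~\ref{worth}) to recover the generators $R/\ip[-i]$ and conclude equality of aisles. The only cosmetic difference is that the paper ranges over all ideals $\ia$ with $\V(\ia)\subset Z$ rather than just the primes in $Z$, which is an immaterial variation.
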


\begin{proof}
It is enough to discuss the case $i=0.$ For each ideal $\ia\subset R$ such that  $\V(\ia)\subset Z$, let us fix a system of generators $\{a_1,\ldots,a_r\}$ of $\ia$. Let $K_\cdot(a_1,\ldots,a_r)$ be the  Koszul complex associated to the sequence $\{a_1,\ldots,a_r\}$ (\cite[III, (1.1.1)]{EGA}). Recall that $K_\cdot(a_1,\ldots,a_r)$ is the complex of $R$-modules defined by
  \[
   K_\cdot(a_1,\ldots,a_r):=\otimes_{j=1}^r K_\cdot(a_j),
  \]
where $K_\cdot(a_j)$ is the complex $(0)$ in all degrees apart from degrees $-1$ and $0$, and whose differential in degree $-1$ is $R\stackrel{a_j}{\lto}R$ the map multiplying by $a_j$. The complex $K_\cdot(a_1,\ldots,a_r)$ is a complex of finitely generated free modules in degrees $[-r,0]$ and $0$ elsewhere, and whose homologies are killed by the ideal $\ia$. The complex $K_\cdot(a_1,\ldots,a_r)$ is compact ({\cfr} \ref{compact}). Furthermore $\supp(\h^i(K_\cdot(a_1,\ldots,a_r)))\subset \V(\ia)$ and $\h^0(K_\cdot(a_1,\ldots,a_r))= R/\ia$. Therefore, Proposition~\ref{4.7}  shows that the aisle generated by the family of complexes
  \[
   \{K_\cdot(a_1,\ldots,a_r)\,;\,\,
   \{a_1,\ldots,a_r\}\subset R \text{ and }
   \V(\langle{a_1,\ldots,a_r}\rangle)\subset Z\}
  \]
agrees with $\cu{0}{Z}$.
\end{proof}

We are now ready to state and prove the main results in this section.

\begin{thm}
\label{4.9}
Let $R$ be a commutative Noetherian ring and $(\CU,\CF[1])$ be a
t-structure on $\D(R)$. The following assertions are equivalent:
  \begin{enumerate}
   \item $\CU$ is compactly generated;
   \item $\CU$ is generated by stalk
         complexes of finitely generated (resp. cyclic) $R$-modules;
   \item $\CU$ is generated by complexes in $\Dbc(R)$;
   \item $\CU$ is generated by complexes in $\Dcmenos(R)$;
   \item there exists a sp-filtration
         $\phi \colon \ZZ\to\CP(\spec(R))$
         such that  $\CU=\CU_\phi$.
\end{enumerate} 
\end{thm}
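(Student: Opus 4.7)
The strategy is to establish a cycle of implications (1)$\Rightarrow$(3)$\Rightarrow$(4)$\Rightarrow$(5)$\Rightarrow$(2)$\Rightarrow$(3) together with (5)$\Rightarrow$(1), where several arrows are formal and the real content lives in (4)$\Rightarrow$(5) and (5)$\Rightarrow$(1), both of which reduce to material proved earlier in the section.

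Three implications are immediate. For (1)$\Rightarrow$(3), recall from \ref{compact} that compact objects in $\D(R)$ are perfect complexes, which are bounded complexes of finitely generated projectives and hence lie in $\Dbc(R)$. The implication (3)$\Rightarrow$(4) is the inclusion $\Dbc(R)\subset \Dcmenos(R)$. The implication (5)$\Rightarrow$(2) is the very definition of $\CU_\phi$ as the aisle generated by the stalks $R/\ip[-i]$ with $\ip\in\phi(i)$, which are complexes of cyclic modules. Finally, (2)$\Rightarrow$(3) is trivial since any stalk complex $M[-i]$ with $M$ a finitely generated $R$-module belongs to $\Dbc(R)$.

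The main step is (4)$\Rightarrow$(5). Suppose $\CU=\aisle{\CM}$ with $\CM\subset \Dcmenos(R)$. For each $\Xdot\in\CM$, Corollary~\ref{worth} gives
\[
\aisle{\Xdot}=\aisle{R/\ip[-j]\,;\,j\in\ZZ,\,\ip\in\supp(\h^j(\Xdot))}.
\]
Define $\phi\colon\ZZ\to\CP(\spec(R))$ by
\[
\phi(i):=\bigcup_{\Xdot\in\CM}\bigcup_{j\geq i}\supp(\h^j(\Xdot)).
\]
Since the support of a finitely generated module is a closed (hence sp) subset of $\spec(R)$ and arbitrary unions of sp-subsets are sp, each $\phi(i)$ is an sp-subset; clearly $\phi$ is decreasing, so $\phi$ is a sp-filtration. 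If $\ip\in\phi(i)$, there exist $\Xdot\in\CM$ and $j\geq i$ with $\ip\in\supp(\h^j(\Xdot))$, so $R/\ip[-j]\in\CU$; applying the shift $[j-i]$ with $j-i\geq 0$ and using $\CU[1]\subset\CU$ yields $R/\ip[-i]\in\CU$, hence $\CU_\phi\subset\CU$. Conversely, each generator $R/\ip[-j]$ of $\aisle{\Xdot}$ satisfies $\ip\in\supp(\h^j(\Xdot))\subset\phi(j)$, so $R/\ip[-j]\in\CU_\phi$, giving $\CU\subset\CU_\phi$.

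It remains to check (5)$\Rightarrow$(1), which closes the cycle. By Proposition~\ref{truncacionparaunideal}, $\CU_\phi$ is the smallest aisle containing the subaisles $\cu{i}{\phi(i)}$ for all $i\in\ZZ$. By Corollary~\ref{corimportante}, each $\cu{i}{\phi(i)}$ is compactly generated; explicitly, the proof there exhibits as generators the shifted Koszul complexes $K_\cdot(a_1,\ldots,a_r)[-i]$ running over all finite sequences with $\V(\langle a_1,\ldots,a_r\rangle)\subset\phi(i)$, and these are perfect, hence compact in $\D(R)$. Taking the union of these compact generating sets over all $i\in\ZZ$ produces a set of compact objects that generates $\CU_\phi$. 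The main obstacle in this whole argument is (4)$\Rightarrow$(5), which relies crucially on Corollary~\ref{worth} and hence on the technical Proposition~\ref{4.7}; once that is in hand, everything else is essentially a matter of bookkeeping.
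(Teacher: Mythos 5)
Your proof is correct and rests on the same key technical inputs as the paper's argument, namely Proposition~\ref{4.7} (applied through Corollary~\ref{worth}) for the implications involving $\Dcmenos(R)$ and Corollary~\ref{corimportante} for compact generation. The only differences are cosmetic: you arrange the implications as a cycle $(1)\Rightarrow(3)\Rightarrow(4)\Rightarrow(5)\Rightarrow(2)\Rightarrow(3)$ with the closing arrow $(5)\Rightarrow(1)$ instead of the paper's pairwise equivalences, and in $(4)\Rightarrow(5)$ you construct $\phi(i)=\bigcup_{X\in\CM}\bigcup_{j\geq i}\supp(\h^j(X))$ explicitly from the generating set rather than taking $\phi=\phi_\CU$ and comparing right orthogonals as the paper does; both are valid and equivalent in substance.
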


\begin{proof}
Using \cite[Proposition~6.3]{Ric}, the equivalence $(1) \dimp (2)$ follows from Proposition~\ref{4.7}  and Corollary~\ref{corimportante}. Again Proposition~\ref{4.7}  provides $(2) \dimp (3) \dimp (4)$.

$(5) \imp (2)$ is obvious. In order to prove $(2) \imp (5)$ we only should realize that $\CU=\CU_\phi$ where $\phi$ is the sp-filtration $\phi=\phi_\CU$. Trivially $\CU_{\phi}\subset \CU$ because $\phi=\phi_\CU$.
Moreover, under hypothesis $(2)$ for $\CU$, Proposition~\ref{4.7}  shows that $\CU_{\phi}^{\,\,\perp}=\CU^\perp$ that implies $\CU_{\phi}=\CU$.
\end{proof}
 
Let $\Ais{(R)}$ be the class of aisles of $\D(R)$ and $\Filsup(R)$ be the set of all sp-filtrations of $\spec(R)$. Let us denote by $\Aisc{(R)}$ the compactly generated aisles of $\D(R)$. Let us consider on $\Ais{(R)}$ the usual inclusion relation. The order on $\Filsup(R)$ is the induced order by the usual one on $\CP(\spec(R))$. We define a couple of order preserving maps
\[
\Ais{(R)}
\underset{\aaa}{\overset{\fff}{\rightleftarrows}}
\Filsup(R)
\]
by setting $\aaa(\phi):=\CU_\phi$ for any  $\phi\in \Filsup(R)$, and $\fff(\CU):=\phi_\CU$ for any  aisle $\CU$ of $\D(R)$ (see \ref{filtrationbyporaisle}).

\begin{thm}
\label{classification-c-g}
The maps $\fff$ and $\aaa$ establish a bijective correspondence between $\Aisc{(R)}$ and $\Filsup(R)$. 
Furthermore, given $\phi\in \Filsup(R)$ the corresponding t-structure $(\CU_\phi,{\CU_\phi}^\perp[1])$ is described in terms of the sp-filtration by:
  \begin{align*}
         \CU_\phi
         & = \{\,X\in\D(R)\,;\,\,\,
         \supp(\h^j(X))\subset\phi(j), \text{ for all } j\in\ZZ\,\}\\
         {\CU_\phi}^\perp
         & =\{\,Y\in\D(R)\,;\,\,\,\,
         \R\Gamma_{\phi (j)}Y\in\D^{>j}(R), \text{ for all } j\in\ZZ\,\}
  \end{align*}
\end{thm}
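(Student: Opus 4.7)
The strategy is to first establish the two explicit descriptions of $\CU_\phi$ and its right orthogonal; the bijective correspondence then drops out formally. I will write $\CV$ for the class on the right-hand side of the asserted description of $\CU_\phi$.

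I will handle the co-aisle first, as it is the easier half. By Proposition~\ref{truncacionparaunideal}, each aisle $\cu{j}{\phi(j)}$ coincides with $\aisle{R/\ip[-j]\,;\,\ip\in\phi(j)}$, so $\CU_\phi$ is precisely the aisle generated by $\bigcup_{j\in\ZZ}\cu{j}{\phi(j)}$. Passing to right orthogonals yields
\[
\CU_\phi^{\,\perp}\;=\;\bigcap_{j\in\ZZ}\cu{j}{\phi(j)}^{\,\perp},
\]
and by Corollary~\ref{ejemplo} the left truncation functor for $\cu{j}{\phi(j)}$ is $\tleq{j}{}\R\Gamma_{\phi(j)}$, so $Y\in\cu{j}{\phi(j)}^{\,\perp}$ exactly when $\tleq{j}{}\R\Gamma_{\phi(j)}Y=0$, equivalently $\R\Gamma_{\phi(j)}Y\in\D^{>j}(R)$. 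Intersecting over $j$ gives the stated formula for $\CU_\phi^{\,\perp}$.

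For the aisle I will argue $\CV=\CU_\phi$ by a two-sided comparison. The inclusion $\CU_\phi\subset\CV$ reduces to checking that $\CV$ is a cocomplete pre-aisle containing every $R/\ip[-j]$ with $\ip\in\phi(j)$: supports commute with coproducts, behave well under extensions via the long exact homology sequence, and the shift condition uses that $\phi$ is decreasing, since $\supp\h^i(X[1])=\supp\h^{i+1}(X)\subset\phi(i+1)\subset\phi(i)$. Conversely, given $X\in\CV$ and $n\in\ZZ$, the truncation $\tleq{n}{}X$ lies in $\Dleq{n}(R)$ and has $\h^i(\tleq{n}{}X)=\h^i(X)$ supported in $\phi(i)\subset\phi(n)$ for every $i\leq n$; Theorem~\ref{suph} then places $\tleq{n}{}X$ inside $\cu{n}{\phi(n)}\subset\CU_\phi$. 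Since $X\iso\mathrm{hocolim}_n\tleq{n}{}X$ and any aisle, being closed under coproducts and extensions, is closed under sequential homotopy colimits, we conclude $X\in\CU_\phi$.

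Finally, I assemble the bijection. From the description of $\CU_\phi$ just obtained, $R/\ip[-i]\in\CU_\phi$ is equivalent to $\V(\ip)=\supp(R/\ip)\subset\phi(i)$, i.e.\ to $\ip\in\phi(i)$; hence $\phi_{\CU_\phi}=\phi$, giving $\fff\circ\aaa=\mathrm{id}$ on $\Filsup(R)$. For $\aaa\circ\fff=\mathrm{id}$ on $\Aisc(R)$, I invoke the implication $(1)\Rightarrow(5)$ of Theorem~\ref{4.9} together with its proof, which identifies the generating sp-filtration of a compactly generated aisle $\CU$ as $\phi_\CU$ itself, so $\CU=\CU_{\phi_\CU}$. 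The main technical point I expect is the homotopy colimit step in the description of $\CU_\phi$ (verifying that aisles in $\D(R)$ really are closed under such sequential colimits and that the standard truncations assemble back to $X$); everything else is a clean assembly of previously established results.
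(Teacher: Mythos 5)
The description of $\CU_\phi^{\perp}$, the inclusion $\CU_\phi\subset\CV$, and the assembly of the bijection at the end are all correct and follow the paper's argument. However, there is a genuine gap in your proof that $\CV\subset\CU_\phi$. You argue that for $X\in\CV$ and $i\leq n$ one has $\supp(\h^i(\tleq{n}{}X))\subset\phi(i)\subset\phi(n)$, and hence $\tleq{n}{}X\in\cu{n}{\phi(n)}$. But a filtration by supports is a \emph{decreasing} map, so for $i\leq n$ the inclusion runs the other way: $\phi(i)\supset\phi(n)$. Thus the homologies of $\tleq{n}{}X$ in degrees $i<n$ may well have supports outside $\phi(n)$, and $\tleq{n}{}X$ need not lie in the single aisle $\cu{n}{\phi(n)}$. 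Concretely, take $\phi(0)=\spec(R)$, $\phi(1)=\varnothing$ and $X=R[0]$: then $\tleq{1}{}X=X=R[0]\notin\cu{1}{\varnothing}=0$, although $X\in\CU_\phi$.

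The conclusion $\tleq{n}{}X\in\CU_\phi$ is nonetheless true, but it is not a one-line application of Theorem~\ref{suph}. The paper obtains it by invoking the proof of the implication $(3)\Rightarrow(1)$ in Proposition~\ref{4.7}, which applies to any bounded-above $X$ and shows, via localization at a minimal prime of the support of the cone $B$ of $\tleq{}{\phi}X\to X$ together with Nakayama's lemma, that $B=0$; this is precisely the part of the argument that cannot be shortcut by placing $\tleq{n}{}X$ inside a single aisle $\cu{n}{Z}$. Once that ingredient is in place, your homotopy-colimit step (the Milnor triangle) is exactly what the paper uses to pass from the truncations to $X$ itself.
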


\begin{proof}
In the proof of $(2) \imp (5)$ in Theorem~\ref{4.9} we have shown that $\aaa {\circ}\fff(\CU)=\CU$ for any $\CU\in \Aisc{(R)}$.

Conversely, if $\phi\in\Filsup(R)$ let us prove that $\phi =\fff {\circ} \aaa(\phi)$.
%
Let
  \[
   \CU:=\{X\in\D(R)\,;\,\,\supp(\h^j(X ))\subset\phi
   (j)\text{ for all }j\in\ZZ\}.
  \]
The class $\CU$ is a cocomplete pre-aisle of $\D(R)$ which contains $\{R/\ip [-j]\,\,;\,\, j\in\ZZ\text{ and }\ip\in\phi (j)\}$ and, hence, it also contains $\CU_\phi=\aaa (\phi)$. If $X\in\CU$ then the proof of the equivalence $(3) \dimp (1)$ in Theorem \ref{4.7} shows that $\tleq{n}{}X$ belongs to $\CU_\phi$, for every $n\in\ZZ$. So in the canonical distinguished triangle
  \[
   \oplus_{n\geq 0}\tau^{\leq n}X\lto
   \oplus_{n\geq 0}\tau^{\leq n}X\lto
   X\stackrel{+}{\lto},
  \]
the two left vertices are objects in $\CU_\phi$, as a consequence $X\in\CU_\phi$. Therefore $\CU_\phi=\CU$, and from this identification it is easy to derive that $\phi =\fff(\CU_\phi)$.

To conclude let us recall from Proposition~\ref{truncacionparaunideal} that
  \[
  ({\cu{i}{\phi(i)}})^\perp=
  \{Y\in\D(R)\,;\,\,\R\Gamma_{\phi (i)}Y\in\D^{>i}(R)\}\qquad (\forall i\in\ZZ). \]
Hence the displayed description for $\CU_\phi^{\,\,\perp}$ in the statement of the current theorem follows from the obvious relation 
$\CU_\phi^{\,\,\perp}=\bigcap_{i\in\ZZ}({\cu{i}{\phi(i)}})^\perp{}$.
\end{proof}

\begin{rem}
\label{ex-Ne-Ma}
The existence of aisles of $\D(R)$ which are not compactly generated is well-known, as it can be easily derived from  \cite[Theorem 3.3]{Nct}. But unlike the situation in {\lc} residue fields are not the right objects to classify compactly generated $t$-structures. If $R$ is an integral domain (commutative and Noetherian) and not a field, then the aisle $\CU$ of $\D(R)$ generated by $\{k(\ip)\,\,;\,\,\ip\in \spec(R)\}$ is not compactly generated. Indeed, otherwise its associated sp-filtration would be given by $\phi (i)=\spec(R)$, for $i\leq 0$, and $\phi (i)=\varnothing$, for $i>0$ (see Theorem~\ref{classification-c-g}). Then we would have $\CU=\Dleq{0}(R)$. But that is impossible because $R[0]\in\CU^\perp$ since $\Hom_R(k(\ip),R)=0$, for every $\ip\in\spec(R)$.
\end{rem}

\begin{cor}
\label{islasinducidas}
Let $\sharp \in \{-, +, \bb, \text{``blank''}\}$. Let $\CV$ be an aisle (or more generally, any total pre-aisle) of $\Dc^\sharp(R)$ generated by bounded above complexes, and let $\CE$ be its right orthogonal in $\Dc^\sharp(R)$. Then there exists a unique $\phi \in \Filsup(R)$ such that $\CV=\CU_\phi\cap\Dc^\sharp(R)$ and $\CE=\CF_\phi\cap\Dc^\sharp(R)$.
\end{cor}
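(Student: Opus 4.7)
The plan is to realize $\CV$ as the trace on $\Dc^\sharp(R)$ of a compactly generated aisle of $\D(R)$, then read off the sp-filtration from Theorem~\ref{classification-c-g}.

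By Lemma~\ref{skeletallysmall} I may fix a set $\CX \subset \Dcmenos(R) \cap \Dc^\sharp(R)$ of bounded above generators of $\CV$, and form $\CU' := \aisle{\CX}$ in $\D(R)$. Since $\CX \subset \Dcmenos(R)$, Theorem~\ref{4.9} produces a unique $\phi \in \Filsup(R)$ with $\CU' = \CU_\phi$, and this will be the required filtration. Applying Proposition~\ref{nuevo} to the total pre-aisle $\CV$ yields an aisle $\CU := {}^\perp(\CV^\perp)$ of $\D(R)$ satisfying $\CV = \CU \cap \Dc^\sharp(R)$ and $\CE = \CU^\perp \cap \Dc^\sharp(R)$; the desired identities therefore reduce to the coincidence $\CU = \CU'$.

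One inclusion is immediate: $\CU' \subset \CU$, because $\CX \subset \CV \subset \CU$ and $\CU$ is an aisle. For the reverse I need $\CX[\NN]^\perp \subset \CV^\perp$, where all orthogonals are taken in $\D(R)$. Given $Y \in \CX[\NN]^\perp$, the class $\CC := {}^\perp\{Y[k] : k \leq 0\}$ is a total pre-aisle of $\D(R)$ by \S\ref{1.1} (the bracketed set is stable under $[1]$), is cocomplete, and contains $\CX$ by hypothesis on $Y$. The crux is then to verify that $\CC \cap \Dc^\sharp(R)$ is a total pre-aisle of $\Dc^\sharp(R)$; once that is in hand, the minimality of $\CV$ among total pre-aisles of $\Dc^\sharp(R)$ containing $\CX$ forces $\CV \subset \CC \cap \Dc^\sharp(R)$, i.e., $Y \in \CV^\perp$. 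To handle that verification I would lean on Proposition~\ref{4.7}, which expresses the relevant $\Hom$-vanishing for bounded above complexes in terms of the cyclic stalks $R/\ip[-j]$, so that $\CC \cap \Dc^\sharp(R)$ can be rewritten as the left orthogonal in $\Dc^\sharp(R)$ of a class manifestly closed under $[1]$.

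With $\CU = \CU_\phi$ thus established, we obtain $\CV = \CU_\phi \cap \Dc^\sharp(R)$ and $\CE = \CF_\phi \cap \Dc^\sharp(R)$. Uniqueness of $\phi$ follows from Theorem~\ref{classification-c-g}: every stalk $R/\ip[-i]$ lies in $\Dbc(R) \subset \Dc^\sharp(R)$, so $\phi(i) = \{\ip : R/\ip[-i] \in \CU_\phi\} = \{\ip : R/\ip[-i] \in \CV\}$ is determined by $\CV$ alone. The only genuinely non-formal point in the whole argument is the totality of $\CC \cap \Dc^\sharp(R)$ as a pre-aisle of $\Dc^\sharp(R)$; everything else assembles formally out of Proposition~\ref{nuevo} and Theorem~\ref{4.9}.
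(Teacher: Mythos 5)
The strategy is to reduce to the identity $\CU = \CU'$, where $\CU := {}^\perp(\CV^\perp)$ and $\CU' := \aisle{\CX}$ for a chosen set $\CX$ of bounded above generators. For $\sharp \in \{-,\bb\}$ this does work (the skeleton of $\CV$ produced in the proof of Proposition~\ref{nuevo} then consists of bounded above complexes and Theorem~\ref{4.9} applies), but for $\sharp \in \{+,\text{``blank''}\}$ the claimed identity $\CU = \CU'$ is false, and this is where the argument breaks. Take $R=\ZZ$, $\sharp = \text{``blank''}$, $\CX = \{\ZZ/p[0] : p \text{ prime}\}$. Then $\CU' = \CU_\psi$ for the filtration $\psi$ with $\psi(i) = \{\text{closed points}\}$ for $i\leq 0$ and $\psi(i)=\varnothing$ for $i>0$, so $\CU'\cap\Dc(\ZZ)=\{Z\in\Dcleq{0}(\ZZ): \h^j(Z)\text{ torsion for all }j\}$. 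On the other hand, a direct computation gives $\CE = \CX[\NN]^{\perp_{\Dc(\ZZ)}} = \{W\in\Dcgeq{0}(\ZZ): \h^0(W)\text{ torsion-free}\}$ and therefore $\CV = {}^{\perp_{\Dc(\ZZ)}}(\CE) = \{Z\in\Dcleq{0}(\ZZ): \h^0(Z)\text{ torsion}\}$, which strictly contains $\CU'\cap\Dc(\ZZ)$ (for instance $\ZZ[1]\in\CV\setminus\CU'$). Hence $\CU\neq\CU'$, and the filtration you exhibit from $\aisle{\CX}$ is \emph{not} the one required by the corollary.

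The same example shows that the step you flag as ``the only genuinely non-formal point'' is in fact false as stated: $\CC\cap\Dc^\sharp(R)$ need not be a total pre-aisle of $\Dc^\sharp(R)$. With $Y=\mathbb{Q}[1]\in\CX[\NN]^\perp$, one finds $\CC = {}^\perp\{Y[k]:k\leq 0\} = \{Z\in\D(\ZZ): \h^j(Z)\text{ torsion for }j\geq -1\}$. Then $\CC\cap\Dc(\ZZ)$ contains $\CX$ but not $\ZZ[1]$, while $\ZZ[1]\in\CV$; so the would-be conclusion ``$\CV\subset\CC\cap\Dc^\sharp(R)$'' is directly contradicted. (Concretely, $(\CC\cap\Dc(\ZZ))^{\perp_{\Dc(\ZZ)}}=\{0\}$, so its double orthogonal is all of $\Dc(\ZZ)$; the class is a pre-aisle but not a total one.) The underlying issue is that Proposition~\ref{4.7} characterizes the $\Hom$-vanishing only for complexes in $\Dcmenos(R)$; for an unbounded $Z\in\Dc^\sharp(R)$ the reduction to stalks $R/\ip[-j]$ is exactly what one would need to prove, and it fails. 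The correct $\phi$ is the filtration $\phi_\CU$ with $\phi(i)=\{\ip: R/\ip[-i]\in\CV\}$, which can be strictly larger than the filtration of $\aisle{\CX}$; establishing $\CU=\CU_{\phi_\CU}$ requires showing $\CU$ is compactly generated even though its generating skeleton contains unbounded complexes, and that is not supplied by the proposed reduction.
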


\begin{proof}
It follows from Proposition~\ref{nuevo} and Proposition~\ref{4.7}.  The uniqueness of $\phi$ follows from the fact $\CV=\CU_\phi\cap\Dc^\sharp(R)$ determines $\phi$ (by Theorem~\ref{classification-c-g} above).
\end{proof}


\section{The weak Cousin condition}
\label{wccondition}

We proceed to classify, under sufficiently general
hypotheses, on $R$, all the compactly generated $t$-structures on $\D(R)$ that restrict to $t$-structures on $\Dc^\sharp(R)$. 
The main result of this section is Theorem~\ref{5.5} ---it provides a necessary condition on a sp-filtration $\phi$ in order to $\CU_\phi\cap\Dc^{\sharp}(R)$ be
an aisle of $\Dc^{\sharp}(R)$. 
Note that the statement of Theorem~\ref{5.5} here and \cite[Proposition 7.4]{Stanley} are almost the same. Here we treat with the case of the unbounded category $\Dc(R)$ and obtain in particular the result for $\Dbc(R)$ (the framework in {\lc}).

\begin{cosa}
\label{notation}
Given a sp-filtration $\phi\colon \ZZ\lto\CP(\spec(R))$ we will denote by $\tleq{ }{\phi}$ the left truncation functor associated to the aisle $\CU_\phi$ and by $\tg{ }{\phi}$ the right truncation functor. So that for each $M\in\D(R)$ the diagram
  \[
   \tleq{}{\phi}M\lto M
   \lto \tg{}{\phi}M\stackrel{+}{\lto}
  \]
denotes the natural distinguished triangle determined by the $t$-structure $(\CU_\phi, \CU_\phi^\perp [1])$ for $\Mdot$; we refer to this triangle as the $\phi$-\emph{triangle with central vertex} $\Mdot\in\D(R)$ or just a $\phi$-\emph{triangle}.




The assumption that $\CU_\phi\cap\Dc^\sharp(R)$ is an aisle of  $\Dc^\sharp(R)$ is equivalent to the fact that the $\phi$-triangle in $\D(R)$ with central vertex $\Xdot$ belongs to $\Dc^\sharp(R)$ whenever $\Xdot\in\Dc^\sharp(R)$. In other words, $\CU_\phi\cap\Dc^\sharp(R)$ is an aisle of $\Dc^\sharp(R)$ if and only if $\tleq{}{\phi}\Xdot\in \Dc^\sharp(R)$ (or equivalently $\tg{}{\phi}\Xdot\in \Dc^\sharp(R)$) for all $\Xdot\in\Dc^\sharp(R)$.



\end{cosa}

The following lemmas are useful in the proof of Theorem~\ref{5.5}.

\begin{lem}
\label{5.3}
Let $\phi\colon\ZZ\to\CP(\spec(R))$ be a sp-filtration. Then, for every $j\in\ZZ$,  we get $\tleq{}{\phi} \Dgeq{j}(R)\subset \Dgeq{j}(R)$ and $\tg{}{\phi} \Dgeq{j}(R)\subset \Dgeq{j}(R)$.
\end{lem}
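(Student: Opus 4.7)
The cornerstone is Theorem~\ref{classification-c-g}, which identifies $\CU_\phi=\{Y:\supp(\h^k(Y))\subset\phi(k)\text{ for all }k\}$ and $\CU_\phi^\perp=\{Y:\R\Gamma_{\phi(i)}Y\in\Dg{i}(R)\text{ for all }i\}$. The first description makes it immediate that $\CU_\phi$ is closed under every standard truncation $\tleq{n}{}$ and $\tgeq{n}{}$, since killing cohomology in some degrees can only shrink supports. This closure drives both halves of the lemma.

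For the aisle side, fix $X\in\Dgeq{j}(R)$ and set $N:=\tleq{}{\phi}X$. Then $\tleq{j-1}{}N$ and $\tgeq{j}{}N$ lie in $\CU_\phi$ by the stability just noted. The composition $\tleq{j-1}{}N\to N\to X$ vanishes because $\tleq{j-1}{}N\in\Dleq{j-1}(R)$ and $X\in\Dgeq{j}(R)$ are standard-orthogonal. The right-adjoint isomorphism $\Hom_{\D(R)}(\tleq{j-1}{}N,N)\cong\Hom_{\D(R)}(\tleq{j-1}{}N,X)$, realized by post-composition with the universal map $N\to X$, then forces the canonical map $\tleq{j-1}{}N\to N$ itself to be zero. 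The standard triangle $\tleq{j-1}{}N\to N\to\tgeq{j}{}N\stackrel{+}{\to}$ consequently splits, giving $\tgeq{j}{}N\cong N\oplus\tleq{j-1}{}N[1]$; since $\tgeq{j}{}N\in\Dgeq{j}(R)$, the summand $\tleq{j-1}{}N[1]$ lies in $\Dgeq{j}(R)\cap\Dleq{j-2}(R)=0$, so $\tleq{j-1}{}N=0$ and $N\in\Dgeq{j}(R)$.

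For the co-aisle side, set $B:=\tg{}{\phi}X$. The long exact cohomology sequence of the $\phi$-triangle, together with $N,X\in\Dgeq{j}(R)$, places $B$ in $\Dgeq{j-1}(R)$ and provides an injection $\h^{j-1}(B)\hookrightarrow\h^j(N)$. Since $N\in\CU_\phi$, the support of $\h^j(N)$ lies in $\phi(j)\subset\phi(j-1)$, so $\h^{j-1}(B)$ is $\phi(j-1)$-torsion. On the other hand $B\in\CU_\phi^\perp$ yields $\R\Gamma_{\phi(j-1)}B\in\Dg{j-1}(R)$; because $B$ has lowest possible cohomology in degree $j-1$, the formula $\h^{j-1}(\R\Gamma_{\phi(j-1)}B)=\Gamma_{\phi(j-1)}\h^{j-1}(B)=\h^{j-1}(B)$ forces $\h^{j-1}(B)=0$, so $B\in\Dgeq{j}(R)$.

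The main obstacle is the co-aisle half: the cohomology long exact sequence by itself only gives $B\in\Dgeq{j-1}(R)$. Ruling out the residual $\h^{j-1}(B)$ requires combining the support bound inherited from the injection into $\h^j(N)$ with the $\R\Gamma$-description of $\CU_\phi^\perp$, and these two pieces interlock \emph{only} because $\phi$ is decreasing: that is what places $\supp(\h^{j-1}(B))$ inside $\phi(j-1)$ and hence in the range where the defining vanishing condition for $\CU_\phi^\perp$ applies at the correct degree.
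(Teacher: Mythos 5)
Your proof is correct, and it uses the same key input as the paper's (the explicit descriptions of $\CU_\phi$ and $\CU_\phi^\perp$ from Theorem~\ref{classification-c-g}), but it routes the argument differently. The paper handles the co-aisle side first and gets the aisle side for free: from the long exact cohomology sequence of the $\phi$-triangle $T\to X\to Y\stackrel{+}{\to}$ one reads off $\h^{i-1}(Y)\cong\h^i(T)$ for $i<j$ and a monomorphism $\h^{j-1}(Y)\hookrightarrow\h^j(T)$, hence $\supp(\h^i(Y))\subset\phi(i)$ for all $i\le j-1$, so $\tleq{j-1}{}Y\in\CU_\phi$; since $Y\in\CU_\phi^\perp$ the canonical map $\tleq{j-1}{}Y\to Y$ is zero, forcing $\tleq{j-1}{}Y=0$ and thus $Y\in\Dgeq{j}(R)$; finally $T\in\Dgeq{j}(R)$ is immediate from the triangle. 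You instead prove the aisle side first, killing $\tleq{j-1}{}N$ via the closure of $\CU_\phi$ under standard truncations, the adjunction $\Hom(\CU_\phi,N)\cong\Hom(\CU_\phi,X)$, and the orthogonality $\Dleq{j-1}(R)\perp\Dgeq{j}(R)$, and then prove the co-aisle side by combining the long exact sequence, the support bound on $\h^j(N)$, and the $\R\Gamma$-characterization of $\CU_\phi^\perp$. The paper's version is slightly more economical (one orthogonality argument covers both halves and only the $\CU_\phi$-description is needed, not the $\R\Gamma$ one), but your version works and has the virtue of being explicit about which description of the co-aisle is being used where. One small streamlining for your aisle step: rather than splitting the standard triangle, you can note directly that the left-truncation adjunction gives $\Hom(\tleq{j-1}{}N,\tleq{j-1}{}N)\cong\Hom(\tleq{j-1}{}N,N)$, so the identity of $\tleq{j-1}{}N$ corresponds to the canonical map into $N$, and the latter being zero forces $\tleq{j-1}{}N=0$ outright.
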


\begin{proof}
Without loss of generality, we may assume that $j=0$. Let $X\in\D^{\geq 0}(R)$ be any complex and put 
$\Tdot =\tleq{}{\phi} \Xdot$ and $\Ydot=\tg{}{\phi} X$. From the long exact sequence of
homology
associated to the canonical
$\phi$-triangle with central vertex $\Xdot$ we obtain isomorphisms $\h^{i-1}(Y )\cong \h^i(T)$ for
all $i<0$, and a monomorphism of $R$-modules $\h^{-1}(\Ydot)\inc \h^0(\Tdot)$. In particular $\supp (\h^{j}(\Ydot))\subset \supp (\h^{j+1}(\Tdot))\subset \phi(j+1)\subset \phi(j)$ for all $j\leq -1$. The explicit description of $\CU_\phi$ given in Theorem~\ref{classification-c-g} shows that
$\tleq{-1}{}\Ydot \in \CU_\phi$. So having in mind that $Y \in \CU_\phi^\perp$  the canonical map
$\tleq{-1}{ }Y \to Y$ is zero.  Thus $Y\in\Dgeq{0}(R)$ and as a consequence $T\in\Dgeq{0}(R)$.
\end{proof}

As usual, we denote by $\ass(M)$ the set of associated prime ideals of a module $M\in\Mod(R)$ (for the basic properties of associated prime ideals {\cfr} \cite[\S 6, page 38]{ma}).

\begin{lem}
\label{5.4}
Let $\phi$ be a sp-filtration of $\spec(R)$. Let $j$ be an integer and $M$ be a finitely generated $R$-module such that $\ass  (M)\cap\phi (j)=\varnothing$ (\emph{e.g.} $M={R}/{\ip }$, with $\ip \not\in\phi (j)$). Let 
  \[
   T\stackrel{a}{\lto} M
   [-j]\stackrel{b}{\lto} Y\stackrel{+}{\lto}
  \]
be the canonical $\phi$-triangle with central vertex $M[-j]\in\D(R)$. Then:
  \begin{enumerate}
    \item $T\in \Dg{j}(R)$;
    \item $Y\in \Dgeq{j}(R)$ and $\Gamma_{\phi(j)}(\h^j(\Ydot))=0$; and
    \item the homomorphism of $R$-modules
          $\h^j(b) \colon  M \to \h^j(Y)$ is
          an essential extension.
  \end{enumerate}
\end{lem}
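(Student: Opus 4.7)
The plan is to combine three ingredients: Lemma~\ref{5.3} (which places $T$ and $Y$ in $\Dgeq{j}(R)$ since $M[-j]\in\Dgeq{j}(R)$), the long exact cohomology sequence of the $\phi$-triangle, and the fact that $T\in\CU_\phi$ together with the description $\supp(\h^i(T))\subset\phi(i)$ coming from Theorem~\ref{classification-c-g}. The hypothesis $\ass(M)\cap\phi(j)=\varnothing$ enters via the standard Noetherian facts that associated primes of a submodule sit inside the associated primes of the ambient module, and that every nonzero module over a Noetherian ring admits an associated prime.

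For (1), the long exact sequence, combined with $\h^{j-1}(Y)=0$, yields an injection $\h^j(T)\hookrightarrow M$; simultaneously $T\in\CU_\phi$ forces $\supp(\h^j(T))\subset\phi(j)$. Any associated prime of a nonzero $\h^j(T)$ would then lie in both $\ass(M)$ and $\phi(j)$, contradicting the hypothesis, so $\h^j(T)=0$ and $T\in\Dg{j}(R)$. For (2), I would apply $\Hom_{\D(R)}(R/\ip[-j],-)$ (with $\ip\in\phi(j)$) to the standard truncation triangle $\h^j(Y)[-j]\to Y\to\tau^{>j}Y\overset{+}{\lto}$; on purely cohomological-degree grounds the $\Hom$'s into $\tau^{>j}Y[-1]$ and $\tau^{>j}Y$ vanish, yielding $\Hom_R(R/\ip,\h^j(Y))\cong\Hom_{\D(R)}(R/\ip[-j],Y)=0$ thanks to $Y\in\CU_\phi^\perp$. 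Hence $\ass(\h^j(Y))\cap\phi(j)=\varnothing$, and a nonzero $\Gamma_{\phi(j)}\h^j(Y)$ would admit an associated prime in $\phi(j)\cap\ass(\h^j(Y))$, a contradiction.

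For (3), part (1) collapses the relevant stretch of the cohomology sequence to $0\to M\xrightarrow{\h^j(b)}\h^j(Y)\to\h^{j+1}(T)$, so $\h^j(b)$ is injective and $\h^j(Y)/M$ embeds into $\h^{j+1}(T)$, which is supported in $\phi(j+1)\subset\phi(j)$ and is therefore $\phi(j)$-torsion. Given any nonzero $x\in\h^j(Y)$, its class mod $M$ is annihilated by some ideal $\ia$ with $\V(\ia)\subset\phi(j)$, so $\ia x\subset M$; part (2) then forbids $\ia x=0$ (otherwise $x\in\Gamma_{\phi(j)}\h^j(Y)=0$), whence $0\neq\ia x\subset Rx\cap M$, establishing essentialness. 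The step I expect to demand the most care is the Hom-identification in (2): verifying the vanishing of both flanking Hom-groups in the truncation long exact sequence for purely cohomological reasons, and then translating the categorical orthogonality $Y\in\CU_\phi^\perp$ into the module-theoretic vanishing of $\Gamma_{\phi(j)}\h^j(Y)$ via the chain from associated primes to torsion submodules.
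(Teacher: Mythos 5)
Your argument is correct and matches the paper's proof in all essentials: both derive the degree bounds from Lemma~\ref{5.3}, kill $\h^j(T)$ by combining the monomorphism $\h^j(T)\hookrightarrow M$ with $\supp(\h^j(T))\subset\phi(j)$ and the hypothesis on $\ass(M)$, obtain the vanishing of $\Gamma_{\phi(j)}(\h^j(Y))$ from $Y\in\CU_\phi^{\perp}$ via the identification $\tleq{j}{}Y\cong\h^j(Y)[-j]$, and deduce essentialness from the embedding of $\h^j(Y)/M$ into the $\phi(j)$-torsion module $\h^{j+1}(T)$. The only cosmetic differences are that the paper reads off the relevant $\Hom$-vanishing directly from the adjunction for $\tleq{j}{}$ (and for arbitrary modules $N$ supported in $\phi(j)$, giving $\Gamma_{\phi(j)}(\h^j(Y))=0$ without a detour through associated primes) rather than from the truncation long exact sequence, and that in part (3) it argues with finitely generated submodules $V$ satisfying $V\cap M=0$ rather than with individual elements.
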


\begin{proof}
Assuming again that $j=0$ and rewriting the proof of Lemma~\ref{5.3} for $\Xdot = M[0]=M$, we get that  $Y$  and $T$ belong to $\Dgeq{0}(R)$ and that the canonical map $\h^0(a)\colon \h^0(T)\to M$ is a monomorphism of $R$-modules.  Then $\ass(\h^0(T))\subset\ass (M)$. The hypothesis on $\ass(M)$ implies that  $\ass (\h^0(T))=\varnothing,$ then
$\h^0(T)=0$ and $T\in \D^{>0}(R)$.

Having in mind that $\h^0(Y )[0]\cong\tleq{0}{}Y,$ we get that
   \begin{align*}
    \Hom_{R}(N, \h^0(Y ))
    & = \Hom_{\D(R)}(N[0], \tleq{0}{}Y)\\
    & \cong \Hom_{\D(R)}(N[0], Y)=0
   \end{align*}
for every $R$-module $N$ such that $\supp (N)\subset \phi (0)$; therefore $\Gamma_{\phi(0)}(\h^0(\Ydot))=0$. Finally, let us check that the monomorphism $\iota:=\h^0(b) \colon M\inc \h^0(Y)$ is essential. Consider the exact sequence 
  \[
   0\rightarrow M \stackrel{\iota}{\lto}
   \h^0(Y) \stackrel{\nu}{\lto}
   \h^1(T)\rightarrow 0
  \]
of $R$-modules associated to the $\phi$-triangle with central vertex $M=M[0]$. Let  $V\subset \h^0(Y )$  be a finitely generated submodule such that $\Img(\iota )\cap V=0.$ Then $\Ker(\nu)\cap V= \Img(\iota )\cap V=0$, so the
composition $V\hookrightarrow \h^0(Y)\stackrel{\nu}{\lto} \h^1(T)$ is also a monomorphism, hence $\supp (V)\subset \supp (\h^1(T))\subset\phi (1)\subset\phi (0)$. Therefore $V=\Gamma_{\phi(0)}(V)\subset \Gamma_{\phi(0)}(\h^0(\Ydot))=0$.
\end{proof}

\begin{thm}
\label{5.5}
Let $\phi \colon \ZZ\to\CP(\spec(R))$ be a sp-filtration. Suppose that $\ip \subsetneq\iq $ is a strict inclusion of prime ideals of $R$ such that $\ip $ is maximal under $\iq $. Let
  \[
   T\lto{R}/{\ip }[-j+1]\lto
   Y\stackrel{+}{\lto}
  \]
be a $\phi$-triangle in $\D(R)$ with central vertex ${R}/{\ip }[-j+1]$.
If $\iq \in\phi (j)$ and $\ip \not\in\phi (j-1)$, then neither $T$ nor $Y$ belongs to $\Dc(R).$
\end{thm}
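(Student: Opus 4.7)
The plan is to apply Lemma~\ref{5.4} with $M = R/\ip$ (whose unique associated prime $\ip$ avoids $\phi(j-1)$) and with index $j-1$ in place of $j$. This yields $T \in \Dg{j-1}(R)$, $Y \in \Dgeq{j-1}(R)$, the vanishing $\Gamma_{\phi(j-1)}(\h^{j-1}(Y)) = 0$, and an essential monomorphism $R/\ip \hookrightarrow \h^{j-1}(Y)$. It suffices to show $Y \notin \Dc(R)$: if $T \in \Dc(R)$ as well, the triangle together with $R/\ip[-j+1] \in \Dbc(R)$ would force $Y \in \Dc(R)$.

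Arguing by contradiction, I would suppose $\h^{j-1}(Y)$ finitely generated and use Proposition~\ref{localizacion} to localize at $\iq$, so that I may work with $R$ local with maximal ideal $\iq$. The essential inclusion $R/\ip \hookrightarrow \h^{j-1}(Y)$ embeds $\h^{j-1}(Y)$ into the injective hull $E_R(R/\ip)$, whose associated primes are $\{\ip\}$; thus $\ass(\h^{j-1}(Y)) = \{\ip\}$. The immediate-generalization hypothesis forces $\V(\ip) = \{\ip,\iq\}$ in the local ring, so $\supp(\h^{j-1}(Y)) = \V(\ip)$ and $\dim \h^{j-1}(Y) = \dim R/\ip = 1$. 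Combining $Y \in \CU_\phi^\perp$ with Theorem~\ref{classification-c-g} gives $\h^j(\R\Gamma_{\phi(j)} Y) = 0$, and the hypercohomology spectral sequence
\[
E_2^{p,q} = \R^p\Gamma_{\phi(j)}(\h^q(Y)) \Longrightarrow \h^{p+q}(\R\Gamma_{\phi(j)} Y),
\]
whose $E_2$-page vanishes below $q = j-1$, shows that the only potential contribution to total degree $j$ from $p=1$ survives to $E_\infty$; hence $\R^1\Gamma_{\phi(j)}(\h^{j-1}(Y)) = 0$.

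Finally, since $\supp \h^{j-1}(Y) \subset \V(\ip)$, Theorem~\ref{suph} gives $\R\Gamma_{\V(\ip)}(\h^{j-1}(Y)) \cong \h^{j-1}(Y)$, and \S\ref{util}\,(1) yields
\[
\R\Gamma_{\phi(j)}(\h^{j-1}(Y)) \cong \R\Gamma_{\phi(j)\cap\V(\ip)}(\h^{j-1}(Y)) \cong \R\Gamma_{\iq}(\h^{j-1}(Y)),
\]
the last step because $\phi(j) \cap \V(\ip) = \{\iq\}$, using that $\ip \notin \phi(j)$ and no prime lies strictly between $\ip$ and $\iq$. Thus $\h^1_\iq(\h^{j-1}(Y)) = 0$, contradicting Grothendieck's non-vanishing theorem for the finitely generated module $\h^{j-1}(Y)$ of dimension one over the local Noetherian ring $R$. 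The main delicate point, on which the whole argument hinges, is precisely the identification $\R\Gamma_{\phi(j)}(\h^{j-1}(Y)) \cong \R\Gamma_\iq(\h^{j-1}(Y))$; the immediate-generalization hypothesis is indispensable here, as it is what trims $\phi(j) \cap \V(\ip)$ down to $\{\iq\}$.
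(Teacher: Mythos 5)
Your argument is correct but takes a genuinely different route at the final step. Both proofs share the same setup: Lemma~\ref{5.4} at index $j-1$ produces $T\in\Dg{j-1}(R)$, $Y\in\Dgeq{j-1}(R)$ and an essential inclusion $R/\ip\hookrightarrow\h^{j-1}(Y)$; localization at $\iq$ via Proposition~\ref{localizacion} reduces to the local case; and the immediate-generalization hypothesis trims $\V(\ip)\cap\phi(j)$ down to the single closed point $\{\iq\}$. From there the paper works with the vertex $T$, not $Y$: it embeds $\ext_R^1(R/\iq^k,R/\ip)$ into the finitely generated $\iq$-power-torsion module $\h^{j}(T)$ and contradicts the explicit, effectively-computed estimate $\iq^{k-1}\ext_R^1(R/\iq^k,R/\ip)\neq 0$ for all $k>0$, which is an elementary, hands-on form of the non-vanishing of $\h^1_\iq(R/\ip)$. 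You instead stay with $Y$: the hypercohomology spectral sequence for $\R\Gamma_{\phi(j)}$ applied to the bounded-below $Y$, combined with $\R\Gamma_{\phi(j)}Y\in\Dg{j}(R)$ supplied by Theorem~\ref{classification-c-g}, gives $\h^1\bigl(\R\Gamma_{\phi(j)}\,\h^{j-1}(Y)\bigr)=0$ (your degree check that $E_2^{1,j-1}$ persists to $E_\infty$ is correct), and after the identification $\R\Gamma_{\phi(j)}\h^{j-1}(Y)\cong\R\Gamma_{\iq}\h^{j-1}(Y)$ via \S\ref{util} this contradicts Grothendieck's non-vanishing theorem for the nonzero finitely generated one-dimensional module $\h^{j-1}(Y)$. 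Your route is shorter and more conceptual but imports Grothendieck's theorem; the paper's computation is longer but elementary and self-contained. Both rest on the same key geometric observation that the filtration isolates the closed point of $\V(\ip)$.
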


\begin{proof}
For simplicity assume $j=1$, and put $\CU=\CU_\phi$. Suppose that one of the objects $T$ or $Y$ belongs to $\Dc(R)$, then the other belongs as well. Hence  $T\to{R}/{\ip }[0]\to Y\stackrel{+}{\to}$ is a triangle in $\Dc(R)$ with $T\in\CU$ and $Y\in\CU^\perp$. By Proposition~\ref{localizacion} localizing at $\iq $, we get a triangle
  \[
   T_\iq \lto {R_\iq }/{\ip R_\iq }[0]\lto
   Y_\iq \stackrel{+}{\lto}
  \]
in $\Dc(R_\iq),$ such that $T_\iq \in\CU_\iq $ and $Y_\iq \in{\CU_\iq}^\perp.$ Moreover, from Proposition~\ref{localizacion} we know that the sp-filtration $\phi_\iq$ of $\spec(R_\iq )$ associated to $\CU_\iq $ is given by $\phi_\iq (i)=\phi (i)\cap \spec (R_\iq )$. As a consequence $\iq R_\iq \in\phi_\iq (1)$ but $\ip R_\iq \not\in\phi_\iq (0)$. Let us simplify the notation assuming that $R=R_\iq$ is local, $\iq =\im$ is the maximal ideal of $R$, and $\ip$ is maximal under $\im$.

By Lemma~\ref{5.4}, under the present hypothesis $T\in \Dg{0}(R)$, $Y\in \Dgeq{0}(R)$, $\Gamma_\im(\h^0(Y))=0$ and the induced homomorphism  ${R}/{\ip}\to \h^0(Y)$ is an essential extension. Then $\supp(\h^0(Y))=\V(\ip)$, and hence $\supp (\h^1(T))\subset \V(\ip)\cap\phi(1)=\{\im\}$.  Therefore $\h^1(T)$ is a finitely generated $R$-module with $\supp (\h^1(T))\subset \{\im\}$, so one can find $r\in \NN$ such that $\im^r \h^1(T)=0.$

Let us fix an integer $k>0$. Let $j\colon R\to R/\ip$ be the canonical homomorphism of rings. The ring $A:=R/\ip$  is an integral local domain of Krull dimension 1 with ${\idealn}:=\im/\ip$ as its maximal ideal.
Notice that the canonical map
  \[
   \ext_{A }^1({A}/{{\idealn}^k},A)
   \stackrel{\alpha}{\lto}
   \ext_{R}^1({R}/({\ip +\im  ^k}),{R}/{\ip })
  \]
is injective. Moreover, applying the homological functor $\Hom_{\D(R)}(-,R/\ip)$ to the short exact sequence of $R$-modules
  \[
   0\rightarrow ({\ip +\im ^k})/{\im ^k}
   \lto {R}/{\im ^k}\lto
   {R}/({\ip +\im ^k})\rightarrow 0,
  \]
we obtain an exact sequence
  \[
   0 \lto
   \ext_R^1(R/(\ip +\im ^k) ,R/\ip )
   \stackrel{\beta}{\lto}\ext_R^1(R/\im ^k ,R/\ip )
  \]
because $\Hom_R(({\ip +\im^k})/{\im^k},{R}/{\ip})=0$. Therefore we get a monomorphism $\beta \alpha \colon \ext_{A }^1({A}/{{\idealn}
^k},A)\inc \ext_R^1(R/\im^k, R/\ip )$. Now
\[
\ext_{A }^1({A}/{{\idealn}^k},A) \cong
   \Hom_{A}({A}/{{\idealn}^k},{Q(A)}/{A}),\]
because $Q(A)=k(\ip )$, the field of quotients of $A=R/\ip$, is the injective hull of $A$ in $\Mod(A)$. Note that this hom can be described as the $A$-submodule of ${Q(A)}/{A}$ of those elements $\bar{x}=x+A\in Q(A)/A$ such that ${\idealn}^k \bar{x}=0$. Since for each $k > 0$ one can always find elements $\bar{x}\in Q(A)/A$ such that ${\idealn}^{k-1}\bar{x}\neq 0={\idealn}^{k}\bar{x}$,  we conclude from the existence of the monomorphism  $\beta \alpha$ that 
 \begin{equation}\label{resultado}
    \im  ^{k-1}\ext^1_R({R}/{\im  ^k},{R}/{\ip })
    \neq 0,\qquad \forall k>0
 \end{equation}

On the other hand, since $\im  \in\phi (1)$ (i.e. ${R}/{\im}[-1]\in\CU$),  by Proposition~\ref{prop-Two-ideals}$(3)$ we get that ${R}/{\im^k}[-1]\in\CU$, for all $k>0$. The properties of the triangle in the hypothesis of the theorem give us isomorphisms 
  \begin{align*}
         \ext_R^1({R}/{\im  ^k},{R}/{\ip })
         &\cong \Hom_{\D(R)}({R}/{\im  ^k}[-1],{R}/{\ip })\\
         &\cong \Hom_{\D(R)}({R}/{\im  ^k}[-1],T)
 \end{align*}
But,
since $T\in \Dg{0}(R)$, we have that
  \[
   \Hom_{\D(R)}({R}/{\im  ^k}[-1],T)\cong
   \Hom_R({R}/{\im  ^k},\h^1(T)),
  \]
so $\Hom_{\D(R)}({R}/{\im  ^k}[-1],T)$ is isomorphic to a submodule of $\h^1(T)$. Hence $\im ^r\ext_R^1({R}/{\im  ^k},{R}/{\ip })=0$, for all $k>0$. This fact contradicts (\ref{resultado}).
\end{proof}

\begin{rem} 
A dualizing complex can be explicitly realized as a residual complex and determines a codimension functor (see remark \ref{interpreting-codimension-function} further on). The codimension function  provides a sp-filtration $\fcm\colon\ZZ\to \CP(\spec(R))$ that satisfies the following condition:
\begin{enumerate}
         \item[]For any $j\in\ZZ$, and any pair of prime ideals $\ip \subsetneq\iq$,
         with $\ip $ maximal under $\iq$, then
         $\iq \in\fcm (j)$ if and only if $\ip \in\fcm (j-1).$
  \end{enumerate}
We call this property the \emph{strong Cousin condition}.  For our purposes it is convenient to consider sp-filtrations under a weaker version of the above condition ({\cfr} Theorem~\ref{5.5}). This fact justifies the following.
\end{rem}

\begin{defn}
Let $\phi\colon\ZZ\to\CP(\spec(R))$ be a sp-filtration. We say that $\phi$ satisfies the \emph{weak Cousin condition}  if the following property holds:
\begin{enumerate}
         \item[]For every $j\in\ZZ,$ if $\ip \subsetneq\iq $
         are prime ideals, with $\ip $ maximal under $\iq$,
         and $\iq \in\phi (j)$ then $\ip \in\phi (j-1).$
  \end{enumerate}
\end{defn}

\begin{cor}
\label{Cousin}
Let $\sharp \in \{-,+,\bb, \text{``blank''}\}$. If $\phi$ is a sp-filtration of $\spec(R)$ such that $\CU_\phi \cap \Dc^{\sharp}(R)$ is an aisle of $\Dc^{\sharp}(R)$, then $\phi$ satisfies the  weak Cousin condition.
\end{cor}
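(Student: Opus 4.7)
The plan is to argue by contraposition, reducing directly to Theorem~\ref{5.5}. Suppose that $\phi$ fails the weak Cousin condition, so we can pick an integer $j\in\ZZ$ and prime ideals $\ip \subsetneq \iq$ with $\ip$ maximal under $\iq$ such that $\iq \in \phi(j)$ while $\ip \notin \phi(j-1)$. I want to exhibit a single compact obstruction in $\Dc^\sharp(R)$ that cannot be $\phi$-truncated inside $\Dc^\sharp(R)$.

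The natural candidate is the stalk complex $X := R/\ip[-j+1]$, which lies in $\Dbc(R)$ and hence in $\Dc^\sharp(R)$ for every boundedness condition $\sharp\in\{-,+,\bb,\text{``blank''}\}$. Form its $\phi$-triangle
\[
T \lto R/\ip[-j+1] \lto Y \stackrel{+}{\lto}
\]
in $\D(R)$. If $\CU_\phi \cap \Dc^\sharp(R)$ were an aisle of $\Dc^\sharp(R)$, then by the characterization recalled in \S\ref{notation} the truncation $T = \tleq{}{\phi}X$ (and hence also $Y = \tg{}{\phi}X$) would belong to $\Dc^\sharp(R)$, and in particular to $\Dc(R)$.

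But Theorem~\ref{5.5}, applied to this very configuration (with the integer $j$ in the role of the $j$ there, since $\iq \in \phi(j)$ and $\ip \notin \phi(j-1)$), says that neither $T$ nor $Y$ lies in $\Dc(R)$. This is the desired contradiction, and thus $\phi$ must satisfy the weak Cousin condition. The argument is essentially formal once Theorem~\ref{5.5} is in hand; the only small point to check is that $R/\ip[-j+1]$ lies in all four flavors $\Dc^\sharp(R)$, which is immediate because it is a bounded complex with a single finitely generated cohomology module. No additional obstacle arises.
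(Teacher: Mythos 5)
Your proof is correct and is exactly the argument the paper has in mind; the paper's own proof simply reads ``Straightforward consequence of Theorem~\ref{5.5},'' and you have filled in the contraposition and the reduction via the equivalence recorded in \S\ref{notation}. One minor wording quibble: ``compact obstruction'' is a slight misnomer, since $R/\ip[-j+1]$ need not be a perfect complex, but the argument only uses that it lies in $\Dbc(R)\subset\Dc^\sharp(R)$, so nothing is affected.
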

\begin{proof}
Straightforward consequence of Theorem~\ref{5.5}.
\end{proof}

Our next goal is to see whether the converse of the statement in Corollary~\ref{Cousin} is also true. For that we study the sp-filtrations satisfying  the weak Cousin condition.

\begin{cosa}
\label{sobrecomponentes}
Recall that for two prime ideals $\ip,\,\iq\in\spec(R)$ the relation $\ip \subset\iq $ can be expressed saying that $\ip$ is a generalization of $\iq$ or, equivalently, $\iq$ is a specialization of $\ip.$ A subset $Y\subset \spec(R)$ is stable under generalization if $\ip \in Y$ whenever  $\ip \subset\iq $ with $\iq\in Y$.
For instance if $\iq\in\spec(R)$ we identify
$\spec(R_\iq)$ with the subset of all generalizations of $\iq$ in $\spec(R)$.

Under the assumption that $R$ is a Noetherian ring, a subset $Y\subset \spec(R)$ is stable under specialization (sp-subset) and generalization if and only if $Y$ is open and closed, equivalently  $Y$ is the union of connected components of $\spec(R)$. Indeed, let $Y\subset \spec(R)$ be stable under specialization and generalization. If $\ip\in\spec(R)$ is a minimal prime ideal such that $\V(\ip)\cap Y\neq \varnothing$ necessarily $\ip\in Y$ because $Y$ is stable under generalization; thus $\V(\ip)\subset Y$ since $Y$ is also stable under specialization. Let $\Min(R)=\{\ip _1, \dots, \ip _s\}$ be the set of minimal prime ideals of $R$ order in such a way that $\Min(R)\cap Y=\{\ip _1, \dots, \ip _r\}$, for an integer $r\leq s$. Then $Y=\cup_{i=1}^r\V(\ip_i)$ so it is closed, and it is open because $\spec(R)\setminus Y=\cup_{i=r+1}^s\V(\ip_{i})$.
\end{cosa}

\begin{prop}
\label{discreteness-of-filtration}
Let   $\phi \colon \ZZ\to\CP(\spec(R))$ be a sp-filtration  that satisfies the  weak Cousin condition. Then there exists an integer $j_0\in \ZZ$ such that $\phi(j)=\phi(j_0)$ for all $j\leq j_0$, and the subset $\phi(j_0)\subset \spec(R)$ is open and closed.
Also, the set $\bigcap_{i\in\ZZ}\phi (i)$ is open and closed.
\end{prop}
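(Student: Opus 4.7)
The plan is to establish both assertions by a single key observation: the weak Cousin condition, iterated along saturated chains, forces any union or intersection of the $\phi(i)$ to be closed under generalization. Combined with the fact that unions and intersections of sp-subsets are sp-subsets, and with the characterization recalled in \ref{sobrecomponentes} that an sp-subset stable under generalization in $\spec(R)$ is open and closed (as $R$ is Noetherian), this immediately yields that both $Y:=\bigcup_{j\in\ZZ}\phi(j)$ and $Z:=\bigcap_{j\in\ZZ}\phi(j)$ are open and closed. The stabilization statement then comes from the finiteness of $\Min(R)$.

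More precisely, I would first prove the following auxiliary claim: if $\iq\in\spec(R)$ and $\ip\subsetneq\iq$, then for every $j\in\ZZ$ there exists $n\in\NN$ (depending only on $\ip,\iq$) such that $\iq\in\phi(j+n)$ implies $\ip\in\phi(j)$. Indeed, since $R$ is Noetherian, by Krull's height theorem $\height(\iq/\ip)<\infty$ in $R/\ip$, so there is a saturated chain $\ip=\ip_0\subsetneq\ip_1\subsetneq\cdots\subsetneq\ip_n=\iq$ of finite length. Applying the weak Cousin condition $n$ times to consecutive pairs yields $\ip_{n-1}\in\phi(j+n-1)$, then $\ip_{n-2}\in\phi(j+n-2)$, and so on down to $\ip_0=\ip\in\phi(j)$. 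Applied with $\iq\in Y$ (resp.\ $\iq\in Z$), this shows that $Y$ and $Z$ are stable under generalization; as they are patently sp-subsets, \ref{sobrecomponentes} gives that both are open and closed.

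For the stabilization, since $R$ is Noetherian the set $\Min(R)$ of minimal primes is finite. The subset $Y$ being open and closed and sp is a finite union of connected components of $\spec(R)$, and as explained in \ref{sobrecomponentes} we have $Y=\bigcup_{\ip\in\Min(R)\cap Y}\V(\ip)$. Writing $\Min(R)\cap Y=\{\ip_1,\dots,\ip_r\}$ and choosing for each $k$ an index $j_k\in\ZZ$ with $\ip_k\in\phi(j_k)$, set $j_0:=\min\{j_1,\dots,j_r\}$. Since the filtration is decreasing, $\ip_k\in\phi(j_0)$ for every $k$, and since $\phi(j_0)$ is sp we get $\V(\ip_k)\subset\phi(j_0)$ for all $k$, hence $Y\subset\phi(j_0)$. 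The reverse inclusion $\phi(j_0)\subset Y$ is trivial, so $\phi(j_0)=Y$, and for every $j\leq j_0$ the sandwich $\phi(j_0)\subset\phi(j)\subset Y=\phi(j_0)$ yields $\phi(j)=\phi(j_0)$.

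The only potentially delicate point is justifying the existence of a \emph{finite} saturated chain between two comparable primes, which is what allows the Cousin condition to be iterated only finitely many times; this is where Noetherianness of $R$ (via Krull's height theorem applied to $R/\ip$) is essential. The rest is bookkeeping combining \ref{sobrecomponentes} with the finiteness of $\Min(R)$.
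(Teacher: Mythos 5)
Your proof is correct and follows essentially the same route as the paper's: unions/intersections of sp-subsets are sp-subsets, the weak Cousin condition (iterated along finite saturated chains) shows $\bigcup\phi(i)$ and $\bigcap\phi(i)$ are stable under generalization, hence open and closed by \ref{sobrecomponentes}, and finiteness of $\Min(R)$ gives the stabilization $\phi(j)=\phi(j_0)$ for $j\leq j_0$. You spell out in more detail the chain-iteration step that the paper leaves implicit, but the structure of the argument is identical.
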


\begin{proof} The class of sp-subsets of $\spec(R)$ is closed under taking arbitrary unions and intersections, from which we get that $\bigcap_{i\in\ZZ}\phi (i)$ and $\bigcup_{i\in\ZZ}\phi (i)$ are sp-subsets. Furthermore  the  weak Cousin condition  implies that the sets $\bigcap_{i\in\ZZ}\phi (i)$ and $\bigcup_{i\in\ZZ}\phi (i)$ are both also stable under generalization, so they are at once open and closed in $\spec(R)$ (see \ref{sobrecomponentes}). The set of  minimal prime ideals of $Y=\bigcup_{i\in\ZZ}\phi (i)$ is finite, so we can find a small enough integer $j_0$ such that $\phi(j_0)$ contains all minimal prime ideals of $Y$ since the sp-filtration is decreasing. Then $\phi(j_0)=Y$,  and  $\phi(j_0)=\phi (j)$ for all $j\leq j_0$.
\end{proof}

\begin{cor}
\label{discreteness-of-filtration2}
If $\spec(R)$ is connected and $\phi$ is not one of the two trivial constant sp-filtrations, then the following assertions hold:
 \begin{enumerate}
    \item The sp-filtration $\phi$ is
          \emph{separated}, \emph{i.e.} $\bigcap_{i\in\ZZ}\phi (i)=
          \varnothing$.
    \item There exists an integer $j_0\in\ZZ$ such that
          $\phi(j_0)=\spec(R)$.
    \item If $R$ has finite Krull dimension, then there exists a large enough $k\in\ZZ$ such that $\phi(k)=\varnothing$.
 \end{enumerate}
\end{cor}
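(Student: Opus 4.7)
For parts (1) and (2), the plan is to combine Proposition~\ref{discreteness-of-filtration} with the connectedness of $\spec(R)$. That proposition shows $\bigcap_{i\in\ZZ}\phi(i)$ is open and closed in $\spec(R)$, and yields an integer $j_0$ such that $\phi(j_0)=\phi(j)$ for every $j\leq j_0$, with $\phi(j_0)$ open and closed. Since $\phi$ is decreasing, this $\phi(j_0)$ equals $\bigcup_{i}\phi(i)$. Because $\spec(R)$ is connected, both of these sets are either $\varnothing$ or all of $\spec(R)$. The hypothesis that $\phi$ is not one of the two trivial constant filtrations rules out $\bigcap_i\phi(i)=\spec(R)$ (which would force $\phi\equiv\spec(R)$) and $\bigcup_i\phi(i)=\varnothing$ (which would force $\phi\equiv\varnothing$), giving (1) and (2) simultaneously.

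For (3), assume $\dim R=n<\infty$. The plan is to assign to each $\ip\in\spec(R)$ the integer
\[ k_\ip:=\max\{k\in\ZZ\colon \ip\in\phi(k)\}, \]
which is well-defined and finite because by (1) every prime eventually leaves the filtration. The weak Cousin condition translates immediately into the inequality $k_\iq\leq k_\ip+1$ whenever $\ip$ is a prime immediately below $\iq$. Iterating along a saturated chain $\ip_0\subsetneq\ip_1\subsetneq\cdots\subsetneq\ip_h=\iq$ starting from a minimal prime yields $k_\iq\leq k_{\ip_0}+h$. Since $R$ is Noetherian, $\Min(R)$ is finite, so $K:=\max\{k_{\ip_0}\colon \ip_0\in\Min(R)\}$ is a finite integer; and since $h\leq n$ for any such chain, we conclude $k_\ip\leq K+n$ for every prime $\ip$, which is exactly the statement $\phi(K+n+1)=\varnothing$.

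The main obstacle is the argument for (3): it rests on two ingredients, namely the finiteness of $k_\ip$ (which is precisely what (1) provides) and the propagation of the upper bound up a saturated chain via the weak Cousin condition. Neither ingredient is deep on its own, but they together glue (1), the Noetherian/finite-dimensional hypotheses, and the weak Cousin condition into the exhaustiveness conclusion.
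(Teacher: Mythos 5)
Your proof is correct and follows essentially the same route as the paper: parts (1) and (2) come from Proposition~\ref{discreteness-of-filtration} together with connectedness, and part (3) propagates a bound along saturated chains via the weak Cousin condition and the finiteness of $\Min(R)$. The only cosmetic difference is that you iterate upward from minimal primes while the paper iterates downward from maximal ideals; the underlying argument is the same.
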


\begin{proof}
Being $\spec(R)$ connected and $\phi$ not one of the two trivial constant sp-filtrations it follows that $\bigcap_{i\in\ZZ}\phi (i)=\varnothing$ and $\bigcup_{i\in\ZZ}\phi (i)=\phi(j_0)=\spec(R)$ with $j_0$ as in the previous Proposition. 

To prove (3), denote by $d$ the Krull dimension of $R$ and by $\Min(R)$ the set of minimal prime ideals of $\spec(R)$. By $(1)$ we can associate to each maximal $\im\subset R$ an integer
$i_\im=\max\{i\in\ZZ\,\,;\,\,\im\in\phi(i)\}$. Let us fix a maximal ideal $\im\subset R$ and take a maximal chain of prime
ideals $\ip_0\subsetneq\ip_1\subsetneq\dots\subsetneq\ip_{r}=\im$ (then $r\leq d$). The weak Cousin condition  for $\phi$ implies that the minimal
prime ideal $\ip_0$ belongs to $\phi
(i_\im-r)\subset\phi (i_\im-d)$. In particular, we
have $\Min(R)\cap\phi (i_\im-d)\neq\varnothing$, which
implies that $i_\im-d\leq m:=\max\{i\in\ZZ\,\,;\,\,\Min(R)\cap\phi (i)\neq\varnothing\}$. Then $i_\im\leq
d+m$, for every maximal $\im\subset R$. So $\phi (d+m+1)$ does not
contain any maximal ideal, which means that $\phi
(d+m+1)=\varnothing$.
\end{proof}

\begin{rem}
For a general sp-filtration $\phi$ the result in Lemma~\ref{5.3} shows that $\CU_\phi\cap\Dmas{}(R)$ is an aisle of $\Dmas{}(R)$. If furthermore there is an integer $k$ such that $\phi (k)=\varnothing$ then $\CU_\phi\subset\Dl{k}{}(R)$. Therefore $\tleq{}{\phi}\Dmenos{}(R)\subset \Dmenos{}(R)$ (equivalently $\tg{}{\phi}\Dmenos{}(R)\subset \Dmenos{}(R)$). In this case $\CU_\phi\cap\D^\sharp(R)$ is an aisle of $\D^\sharp(R)$ for all $\sharp\in\{-,+,\bb\}$.
\end{rem}

As a consequence of the above Proposition we get the following. 

\begin{cor}
\label{acotacion-wcc-fil}
Assume that  $\spec(R)$ is connected, $R$ has finite Krull dimension, and that $\phi$ is a non-constant sp-filtration of $\spec(R)$ satisfying the  weak Cousin condition. For any superscript  $\sharp\in\{-,+,\bb\}$, $\CU_\phi\cap\D^\sharp(R)$ is an aisle of $\D^\sharp(R)$. Moreover, there exist integers $j\leq k$ for which the canonical maps $\tl{k}{}X \to X$ and $X\to\tg{j}{ }X$ induce isomorphisms $\tleq{}{\phi}\tl{k}{}\Xdot\iso\tleq{}{\phi}\Xdot$  and $\tg{}{\phi}\Xdot\iso \tg{}{\phi}\tg{j}{ }\Xdot$, for all $\Xdot\in\D(R)$.
\end{cor}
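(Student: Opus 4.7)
My plan is to reduce immediately to the essentially finite situation via Corollary~\ref{discreteness-of-filtration2}. Since $\phi$ is a non-constant sp-filtration satisfying the weak Cousin condition with $\spec(R)$ connected and $R$ of finite Krull dimension, that result provides integers $j_{0}\in\ZZ$ with $\phi(j_{0})=\spec(R)$ and $k\in\ZZ$ with $\phi(k)=\varnothing$; because $\phi$ is decreasing and $\spec(R)\neq\varnothing$, necessarily $j_{0}<k$. The aisle claim is then immediate from the remark preceding the statement: the existence of $k$ with $\phi(k)=\varnothing$ forces $\CU_{\phi}\cap\D^{\sharp}(R)$ to be an aisle of $\D^{\sharp}(R)$ for every $\sharp\in\{-,+,\bb\}$.

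For the isomorphism statements I would first record two comparison inclusions by inspecting the homological description of $\CU_{\phi}$ from Theorem~\ref{classification-c-g}. On the one hand, any $\Xdot\in\Dleq{j_{0}}(R)$ has $\supp(\h^{i}(\Xdot))=\varnothing\subset\phi(i)$ for $i>j_{0}$ and $\supp(\h^{i}(\Xdot))\subset\spec(R)=\phi(j_{0})\subset\phi(i)$ for $i\leq j_{0}$, so $\Dleq{j_{0}}(R)\subset\CU_{\phi}$. On the other hand, every object of $\CU_{\phi}$ has $\supp(\h^{i}(-))\subset\phi(i)=\varnothing$ for $i\geq k$, so $\CU_{\phi}\subset\Dleq{k-1}(R)$; passing to right orthogonals within the standard $t$-structure yields $\Dgeq{k}(R)=\Dleq{k-1}(R)^{\perp}\subset\CU_{\phi}^{\perp}$.

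Taking $j:=j_{0}$, for any $\Xdot\in\D(R)$ the standard triangle $\tleq{j_{0}}{}\Xdot\to\Xdot\to\tg{j_{0}}{}\Xdot\overset{+}{\to}$ has its left vertex in $\CU_{\phi}$, so applying $\tg{}{\phi}$ kills it and yields $\tg{}{\phi}\Xdot\iso\tg{}{\phi}\tg{j_{0}}{}\Xdot$. Dually, the triangle $\tl{k}{}\Xdot\to\Xdot\to\tgeq{k}{}\Xdot\overset{+}{\to}$ has right vertex in $\CU_{\phi}^{\perp}$, so applying $\tleq{}{\phi}$ produces $\tleq{}{\phi}\tl{k}{}\Xdot\iso\tleq{}{\phi}\Xdot$. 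I foresee no serious obstacle: once the finiteness extracted from Corollary~\ref{discreteness-of-filtration2} is in hand, the whole argument reduces to comparing $\CU_{\phi}$ with a sufficiently large standard aisle and invoking the remark preceding the statement.
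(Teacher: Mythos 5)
Your proof is correct and follows essentially the same route as the paper: extract $j_{0}$ and $k$ from Corollary~\ref{discreteness-of-filtration2}, invoke the preceding remark for the aisle claim, and then compare $\CU_{\phi}$ with the standard aisles $\Dleq{j_{0}}(R)$ and $\Dl{k}(R)$ to get the two isomorphisms via the adjunctions defining the truncation functors. The phrase ``applying $\tg{}{\phi}$ kills it'' is shorthand for the Hom-computation the paper writes out explicitly (the truncations are not triangulated functors, so one argues on $\Hom(-,Y)$ for $Y\in\CU_{\phi}^{\perp}$ and then uses the adjunction), but the underlying argument is the same.
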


\begin{proof} By Corollary~\ref{discreteness-of-filtration2} there exist integers $j\leq k$ such that $\phi (j)=\spec(R)$ and $\phi (k)=\varnothing$. So the remark preceding this corollary shows that $\CU_\phi\cap\D^\sharp(R)$ is an aisle of $\D^\sharp(R)$.

The canonical homomorphism $X {\to}\,\tg{j}{}X$ induces natural isomorphisms
  \[
   \Hom_{\D(R)}(\tg{j}{}\Xdot,\Ydot)\iso \Hom_{\D(R)}(\Xdot,\Ydot)
  \]
for all $\Ydot\in\CU_\phi^{\,\perp}$ because $\Dleq{j}{}(R)\subset \CU_\phi$. As a consequence of the natural adjunction isomorphisms $\Hom_{\D(R)}(\Ndot,\Ydot)\iso   \Hom_{\D(R)}(\tg{ }{\phi}\Ndot,\Ydot)$ for any $\Ndot\in\D(R)$ and $\Ydot\in\CU_\phi^{\,\perp}$, we get that the natural map $\tg{}{\phi}X {\to}\,\tg{}{\phi}\tg{j}{}X$ is an isomorphism.

Following a dual path, note that the canonical map $\tl{k}{}X {\to}X$ induces, for all $U\in\CU_\phi$, natural isomorphisms
  \[
   \Hom_{\D(R)}(U,\tl{k}{}X)\iso \Hom_{\D(R)}(U,\Xdot)
  \]
because $ \CU_\phi\subset \Dl{k}{}(R)$. Then the adjunction isomorphism
  \[
   \Hom_{\D(R)}(U,\Ndot)\iso   \Hom_{\D(R)}(U,\tleq{}{\phi}\Ndot),
  \]
for any $\Ndot\in\D(R)$ and $U\in\CU_\phi$, lead us to conclude that the natural map $\tleq{}{\phi}\tl{k}{} X {\to}\,\tleq{}{\phi} X $ is an isomorphism. 
\end{proof}

\begin{cor}
\label{5.11}
Let  $\phi$ be a sp-filtration of $\spec(R)$. Consider the following assertions:
  \begin{center}
     $(\sharp)\quad \CU_\phi\cap\Dc^\sharp(R)$
     is an aisle of $\Dc^\sharp(R)$,
  \end{center}
with $\sharp\in\{-,+,\bb,\text{``blank''}\}$.
Then $(-) \imp (\bb)$ and, if $R$ has finite Krull dimension then the assertions $(-)$, $(+)$, $(\bb)$ and $(\text{``blank''})$ are equivalent.
\end{cor}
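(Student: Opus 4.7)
The plan is to split into two parts. The general implication $(-)\imp(\bb)$ is straightforward, while the equivalence under finite Krull dimension will be proved by reducing to the connected-spectrum case and closing a cycle of implications that leans on Corollary~\ref{acotacion-wcc-fil}.

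For $(-)\imp(\bb)$, I would start with $\Xdot\in\Dbc(R)\subset\Dcmenos(R)$ and use hypothesis $(-)$ to place $\tleq{}{\phi}\Xdot$ in $\Dcmenos(R)$; Lemma~\ref{5.3} simultaneously keeps $\tleq{}{\phi}\Xdot$ bounded below (as $\Xdot$ is), so $\tleq{}{\phi}\Xdot\in\Dbc(R)$, and the cone $\tg{}{\phi}\Xdot$ lies in $\Dbc(R)$ by the two-of-three property. No finiteness hypothesis is needed here.

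For the equivalence under finite Krull dimension, I first note that each of the four conditions forces $\phi$ to satisfy the weak Cousin condition (Corollary~\ref{Cousin}). Since $R$ is Noetherian, $\spec(R)$ has finitely many connected components and $R=R_1\times\cdots\times R_n$ with each $\spec(R_i)$ connected; accordingly $\D(R)\cong\prod\D(R_i)$ and the aisle $\CU_\phi$ together with its truncation functors decomposes componentwise according to the restrictions $\phi_i$ of $\phi$ to $\spec(R_i)$, each satisfying weak Cousin. If $\phi_i$ is constantly $\varnothing$ or constantly $\spec(R_i)$, the description in Theorem~\ref{classification-c-g} identifies $\CU_{\phi_i}$ with $0$ or with $\D(R_i)$ respectively, and the restrictions are trivially aisles. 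When $\phi_i$ is non-constant, Corollary~\ref{discreteness-of-filtration2} furnishes integers $j_i\leq k_i$ with $\Dleq{j_i}(R_i)\subset\CU_{\phi_i}\subset\Dl{k_i}(R_i)$, and Corollary~\ref{acotacion-wcc-fil} provides the crucial isomorphisms $\tleq{}{\phi_i}\tl{k_i}{}\Xdot\iso\tleq{}{\phi_i}\Xdot$ and $\tg{}{\phi_i}\Xdot\iso\tg{}{\phi_i}\tg{j_i}{}\Xdot$ for every $\Xdot\in\D(R_i)$.

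With this reduction in hand I would close the cycle $(-)\imp(\text{``blank''})\imp(\bb)\imp(+)\imp(-)$. For $(-)\imp(\text{``blank''})$: given $\Xdot\in\Dc(R)$, on each non-constant component $\tl{k_i}{}\Xdot_i\in\Dcmenos(R_i)$, so $(-)$ puts $\tleq{}{\phi_i}\Xdot_i\in\Dcmenos(R_i)\subset\Dc(R_i)$. For $(\text{``blank''})\imp(\bb)$: Lemma~\ref{5.3} supplies the lower bound, and the upper bound comes from $\tleq{}{\phi_i}\Xdot_i\in\Dl{k_i}(R_i)$ on the non-constant components (the constant cases being immediate). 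For $(\bb)\imp(+)$: if $\Xdot\in\Dcmas(R)\cap\Dgeq{N}(R)$, then $\tl{k_i}{}\Xdot_i\in\Dbc(R_i)$, so $(\bb)$ yields $\tleq{}{\phi_i}\Xdot_i\in\Dbc(R_i)\subset\Dcmas(R_i)$. The subtlest step will be $(+)\imp(-)$: here reducing $\tleq{}{\phi_i}\Xdot_i$ via $\tl{k_i}{}\Xdot_i$ only produces a bounded-above object in $\Dcmenos$, to which $(+)$ does not apply. The remedy is to dualize and argue through the right truncation: since $\tg{j_i}{}\Xdot_i$ lies in $\Dbc(R_i)\subset\Dcmas(R_i)$, $(+)$ places $\tg{}{\phi_i}\Xdot_i\iso\tg{}{\phi_i}\tg{j_i}{}\Xdot_i$ in $\Dcmas(R_i)$; the easy observation that $\tg{}{\phi_i}\Xdot_i$ is bounded above (as the cone in the $\phi_i$-triangle whose other two vertices are bounded above) upgrades this to $\Dbc(R_i)$, and the two-of-three property of $\Dcmenos(R_i)$ applied to the $\phi_i$-triangle $\tleq{}{\phi_i}\Xdot_i\to\Xdot_i\to\tg{}{\phi_i}\Xdot_i\xto{+}$ then delivers $\tleq{}{\phi_i}\Xdot_i\in\Dcmenos(R_i)$.
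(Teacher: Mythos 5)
Your proof is correct and relies on the same ingredients as the paper's: Corollary~\ref{Cousin} to get the weak Cousin condition, the reduction to the connected, nonconstant case, Lemma~\ref{5.3}, and the truncation isomorphisms of Corollary~\ref{acotacion-wcc-fil}. The only difference is the ordering of the cycle: the paper closes $(-)\imp(\bb)\imp(+)\imp(\text{``blank''})\imp(-)$, where the last step is immediate from $\CU_\phi\subset\Dleq{k}(R)$, whereas you run $(-)\imp(\text{``blank''})\imp(\bb)\imp(+)\imp(-)$, which makes the last step $(+)\imp(-)$ slightly more involved (you route through the right truncation and a boundedness argument) but is perfectly valid.
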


\begin{proof} First of all note that under any of the assumptions labeled $(\sharp)$ the sp-filtration $\phi$ satisfies the  weak Cousin condition  ({\cfr} Corollary~\ref{Cousin}). Furthermore we can assume that $\spec(R)$ is
connected and $\phi$ is not one of the two trivial constant sp-filtrations.

We can rewrite each of the four statements here by saying that $\tleq{}{\phi}\Xdot\in \Dc^\sharp(R)$ (equivalently $\tg{}{\phi}\Xdot\in \Dc^\sharp(R)$) for all $\Xdot\in\Dc^\sharp(R)$.

First let us show that $(-)\imp (\bb)$. If $X\in\Dbc(R)$ then the $\phi$-triangle with central term $X$ is in $\Dmas{}(R)$ (see Lemma~\ref{5.3}). By assumption ($-$) we also have that it lays on $\Dcmenos(R)$, hence on $\Dbc(R)$.

In the rest of the proof we suppose in addition that $R$ has finite Krull dimension. Then Corollary~\ref{acotacion-wcc-fil} applies here. Let $j\leq k$ be the integers in the statement of Corollary~\ref{acotacion-wcc-fil}.
In order to prove $(\bb)\imp (+)$ let us take any complex $X\in\Dcmas(R)$; then  we get that $\tleq{}{\phi}X\cong \tleq{}{\phi}\tl{k}{}X \in\Dbc(R)\subset \Dcmas(R)$ by $(\bb)$ because $\tl{k}{}X\in\Dbc(R)$. To check $(+)\imp (\text{``blank''})$ notice that for every $X\in\D(R)$ it holds that $\tg{}{\phi}X\cong \tg{}{\phi}\tg{j}{}X$ by Corollary~\ref{acotacion-wcc-fil}; then $\tg{}{\phi}X\cong \tg{}{\phi}\tg{j}{}X \in\Dcmas(R)\subset \Dc{}(R)$ by $(+)$. Finally $(\text{``blank''})\imp (-)$ is a consequence of the fact that $\CU_\phi\subset\Dleq{k}{}(R)$.
\end{proof}

\begin{cor}
\label{bounded=right-bounded}
Over a commutative Noetherian ring of finite Krull dimension the problems of classifying t-structures on $\Dcmenos(R)$ and $\Dbc(R)$ are equivalent. They are also equivalent to classifying on $\Dc{}(R)$ and $\Dcmas(R)$ all t-structures  generated by perfect complexes (or, by bounded above complexes).
\end{cor}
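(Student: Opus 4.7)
The plan is to realize each of the four classification problems as a parametrization by a single common set of sp-filtrations of $\spec(R)$. For each $\sharp\in\{-,+,\bb,\text{``blank''}\}$, Corollary~\ref{islasinducidas} supplies the assignment
\[
   \phi \;\longmapsto\; \CU_\phi \cap \Dc^\sharp(R),
\]
which is a bijection from
\(
   F^\sharp := \{\phi \in \Filsup(R)\,;\,\CU_\phi \cap \Dc^\sharp(R)\text{ is an aisle of }\Dc^\sharp(R)\}
\)
onto the class of aisles of $\Dc^\sharp(R)$ generated by bounded above complexes. So the first step is to use Corollary~\ref{5.11} to identify the four sets $F^-$, $F^{\bb}$, $F^+$ and $F^{\text{blank}}$ under the hypothesis that $R$ has finite Krull dimension; this is precisely what Corollary~\ref{5.11} asserts and is the only substantive ingredient needed.

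Next I would handle the two qualifications appearing in the statement. For $\sharp\in\{-,\bb\}$ every object of $\Dc^\sharp(R)$ is itself bounded above, so every aisle of $\Dcmenos(R)$ and of $\Dbc(R)$ is trivially generated by bounded above complexes; hence the bijection above parametrizes \emph{all} $t$-structures on these two categories. For $\sharp\in\{+,\text{``blank''}\}$ the map parametrizes the $t$-structures of $\Dcmas(R)$ and $\Dc(R)$ generated by bounded above complexes, and the parenthetical equivalence with \emph{perfect} generators follows from the description of $\CU_\phi$ in Theorem~\ref{4.9}: the Koszul complexes produced in Corollary~\ref{corimportante} are perfect and lie in $\Dbc(R)\subset\Dc^\sharp(R)$, so the aisle of $\Dc^\sharp(R)$ generated by bounded above complexes associated with $\phi$ coincides with the one generated by those perfect complexes.

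Combining the two steps, I obtain canonical bijections between each of the four classes of $t$-structures and the common set $F^-=F^{\bb}=F^+=F^{\text{blank}}$, which by construction are compatible (they all factor through $\phi\mapsto\CU_\phi$), so the four classification problems are equivalent. There is no serious obstacle: the substance has been carried out in Corollary~\ref{5.11} (whose proof in turn relies on the boundedness produced by Corollary~\ref{acotacion-wcc-fil}), and the present corollary is just the formal assembly of those results with Corollary~\ref{islasinducidas}.
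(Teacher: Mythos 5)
Your proposal is correct and follows essentially the same route as the paper's proof, which is the one-line observation that the statement follows from Corollary~\ref{islasinducidas} and Corollary~\ref{5.11}. You have simply unpacked that combination: Corollary~\ref{islasinducidas} gives a common parametrization of the relevant aisles by sp-filtrations, and Corollary~\ref{5.11} (with the finite Krull dimension hypothesis) identifies the four classes of filtrations; the parenthetical clauses about bounded above and perfect generators are handled as you say via Theorem~\ref{4.9} and the Koszul generators from Corollary~\ref{corimportante}.
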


\begin{proof}
It follows from Corollary~\ref{islasinducidas} and Corollary~\ref{5.11}.
\end{proof}

\begin{cosa}
\label{idempotentes}
The sp-filtrations of $\spec(R)$ corresponding to Bousfield localizations on $\D(R)$ are exactly the constant sp-filtrations. If $\phi$ is a  constant sp-filtration inducing a Bousfield localization on $\Dc^\sharp(R)$ then $\phi$ satisfies weak Cousin condition (for any $\sharp \in \{-,+,\bb,\text{``blank''}\}$). By Proposition~\ref{discreteness-of-filtration}, there exists a subset $Z\subset\spec(R)$ open and closed such that $\phi(j)=Z$ for all $j\in\ZZ$. Hence $Z=\V(e)$ for an idempotent element $e\in R$. For each complex $\Xdot \in \D(R)$ the Bousfield triangle associated to $Z$ is 
\[
\R\Gamma_{\V(e)}X \lto X \lto 
   X_e \stackrel{+}{\lto}
\] 
Note that $X\cong\R \Gamma_{\V(e)}(X) \oplus X_e$, because the third map  in the triangle is zero.  So trivially the Bousfield localization $(\CU_\phi, \CU_\phi^\perp[1])$ restricts to a Bousfield localization on $\Dc^\sharp(R)$. These are all the Bousfield localizations on $\Dc^\sharp(R)$. In particular, if $\spec(R)$ is connected then the Bousfield localizations generated by bounded above complexes (equivalently, the Bousfield localizations generated by perfect complexes) on $\Dc^\sharp(R)$ are just the trivial ones.
\end{cosa}

%


\section{
Aisles determined by finite filtrations by supports
}
\label{descripciontruncados}

In this section we set the stage to show that, under sufficiently general hypothesis, the converse of the statement in Corollary~\ref{Cousin} is true.

\begin{cosa}
\label{cor-lema}
If $R$ has finite Krull dimension and $\spec(R)$ is connected, for each nonconstant sp-filtration $\phi\colon \ZZ\to\CP(\spec(R))$ satisfying the  weak Cousin condition  there exist integers $j_0\leq k$ such that $\phi(j_0)=\spec(R)$ and $\phi(k)=\varnothing$ ({\cfr} Corollary~\ref{discreteness-of-filtration2}). For a general commutative Noetherian ring $R$ we introduce the following definition.

\begin{defn}
Let $\phi\colon \ZZ\to\CP(\spec(R))$ be a sp-filtration. Let $s \leq n$ two integers. We say that $\phi$ is \emph{determined in the interval} $[s,n]$ if $\phi(j)=\phi(s)$ for all $j\leq s$, $\phi(s) \supsetneq \phi(s+1)$ and $\phi(n) \supsetneq\phi(n+1)=\varnothing$.

If the sp-filtration $\phi\colon \ZZ\to\CP(\spec(R))$ is determined in the interval $[s,n]\subset\ZZ$ we say that $\phi$ is \emph{finite of length} $\lth(\phi):=n-s+1$. For any finite sp-filtration $\phi$ we have that $\lth(\phi)\geq 1.$ 
\end{defn}

\begin{rem} Although a constant sp-filtration is never determined in an interval in the above sense, we adopt the convention that a constant sp-filtration $\phi$ of $\spec(R)$ is finite of length $\lth(\phi):=0$. The associated aisle to a constant sp-filtration is a localizing class corresponding to a $\otimes$-compatible Bousfield localization ({\cfr} \S \ref{util0}).
\end{rem}

\end{cosa}

\begin{cosa}
\label{parausar}
Let $\phi\colon \ZZ\to \CP (\spec (R))$ be a finite sp-filtration of length $1$, that is a sp-filtration such that $\CU_{\phi}=\cu{i}{Z}$ for a fixed $i\in\ZZ$ and $Z=\phi(i)$. Corollary~\ref{ejemplo} shows that its associated left truncation functor $\tleq{ }{\phi}{ }$ is $\tleq{i}{}\R\Gamma_{{\phi}(i)}$. Note that for each complex $X\in\D(R)$, $\tg{ }{\phi}{ }X$ is determined by the existence of a distinguished triangle in $\D(R)$
  \[
   \tg{i}{}{\R\Gamma_{Z}} X {\lto}\,\, \tg{ }{\phi}{ }X \lto 
   {\R Q_Z} X \overset{+}{\lto}
  \]
built as follows.
The natural map ${\pi}\colon\tleq{ }{\phi}{ }X {\to}X$ is the composition of the canonical maps ${\alpha}\colon \tleq{i}{}{\R\Gamma_{Z}}X{\to} {\R\Gamma_{Z}}X$ and ${\rho}\colon{\R\Gamma_{Z}}X{\to} X.$ Applying the octahedron axiom to the commutative diagram $\pi=\rho{} {\circ}\alpha$
  \[
   \begin{tikzpicture}[scale=0.9]
      \draw[white] (0cm,2cm) -- +(0: \linewidth)
      node (E) [black, pos = 0.45] {$\Xdot$}
      node (H) [black, pos = 0.92] {}
      node (F) [black, pos = 0.8] {${\R Q_Z} \Xdot$};
      \draw[white] (0cm,1.675cm) -- +(0: \linewidth)
      node (Z) [black, pos = 0.1] {}
      node (L) [black, pos = 0.9] {};
      \draw[white] (0cm,0.7cm) -- +(0: \linewidth)
      node (A) [black, pos = 0.16] {$\tleq{i}{}{\R\Gamma_{Z}} \Xdot$};
      \draw[white] (0cm,3.05cm) -- +(0: \linewidth)
      node (W) [black, pos = 0.1] {}
      node (T) [black, pos = 0.9] {};
      \draw[white] (0cm,3.25cm) -- +(0: \linewidth)
      node (C) [black, pos = 0.37] {$\tg{i}{}{\R\Gamma_{Z}} \Xdot$};
      \draw[white] (0cm,4.2cm) -- +(0: \linewidth)
      node (U) [black, pos = 0.1] {}
      node (V) [black, pos = 0.9] {};
      \node (B) at (intersection of A--C and F--E) {${\R\Gamma_{Z}}\Xdot$};
      \node (K) at (intersection of A--E and C--F) {$\tg{ }{\phi}{ }X$};
      \node (D) at (intersection of A--B and U--V) { };
      \node (I) at (intersection of A--E and W--T) { };
      \node (J) at (intersection of K--F and Z--L) { };
      \draw [->] (A) -- (B) node[left=2.5pt, near end, scale=0.75]{$\alpha$};
      \draw [->] (B) -- (C);
      \draw [->] (C) -- (D) node[above, midway, sloped, scale=0.75]{+};
      \draw [->] (B) -- (E) node[above, midway, scale=0.75]{$\rho$};
      \draw [->] (E) -- (F);
      \draw [->] (K) -- (I) node[above, midway, sloped, scale=0.75]{+};
      \draw [->] (C) -- (K);
      \draw [->] (K) -- (F);
      \draw [->] (E) -- (K);
      \draw [->] (F) -- (J) node[below, midway, sloped, scale=0.75]{+};
      \draw [->] (F) -- (H) node[above, midway, sloped, scale=0.75]{+};
      \draw [->] (A) -- (E) node[below, midway, scale=0.75]{$\pi$};
   \end{tikzpicture}
  \]
we get the vertex of the triangle with base ${\pi}\colon\tleq{ }{\phi}{ }X \to X$ inserted in the triangle we are looking for.
\end{cosa}

\begin{cosa}
\label{parausar2}
Our next task is the description of the truncation functors associated to any finite sp-filtration, see Proposition~\ref{Kashiwara=ours} and Corollary~\ref{segundo-cor-lema}. These results are used in the proof of Lemma~\ref{induction-tool} that let us establish an inductive way to achieve the classification of aisles of $\Dc^\sharp(R)$ in the last section of the paper. 

In \S~\ref{parausar} we have described the truncation functors associated to any finite sp-filtration of length $1$. Let $\phi\colon \ZZ\to\CP(\spec(R))$ be any sp-filtration. For each integer $i\in\ZZ$ let us denote by $\phi_i \colon \ZZ\to\CP(\spec(R))$ the sp-filtration determined by $\CU_{\phi_i}:= \cu{i}{\phi(i)}$. The family $\{\phi_i\,;\, i\in\ZZ\}$ of sp-filtrations of length $1$ determines $\phi$.

Starting from a finite sp-filtration $\phi\colon \ZZ\to\CP(\spec(R))$  determined in the interval  $[s,n]\subset \ZZ$,  we construct a finite sp-filtration $\phi'\colon \ZZ\to\CP(\spec(R))$ of length $\lth(\phi')\leq\lth(\phi)$ by setting  $\phi '(n)=\varnothing$ and $\phi '(j)=\phi (j)$, for all $j\leq n-1$ (that is, ${\phi '}_j=\phi_j$ for any $j\leq n-1$). If $\lth(\phi)>1,$ then $\phi '$ is a finite sp-filtration of length $1\leq \lth(\phi')= \lth(\phi)-1$. Note that if $\phi$ satisfies the  weak Cousin condition then so  does $\phi'$.


\end{cosa}

\begin{lem}
\label{componiendo-2-trucados-derecha}
Let us fix two integers $i\leq j$ and $Z_{j}$, $Z_{i}$ two sp-subsets of $\spec(R)$.  Let $\phi_i$ and $\phi_j$ be the sp-filtration of $\spec(R)$ of length $1$ determined by $\CU_{\phi_k} := \CU_{Z_k}^k$ with $k \in \{i, j\}$. 
If $M \in {\CU_{\phi_i}}^{\perp}$ then $\tg{ }{{\phi_j}}{} {\Mdot} \in {\CU_{\phi_i}}^{\perp}$.
\end{lem}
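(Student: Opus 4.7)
The plan is to use the triangle constructed in \S\ref{parausar} for the length-one sp-filtration $\phi_j$, namely
\[
\tg{j}{}\R\Gamma_{Z_j}\Mdot \lto \tg{}{\phi_j}\Mdot \lto \R Q_{Z_j}\Mdot \overset{+}{\lto}
\]
Since $\CU_{\phi_i}^{\perp}$ is the co-aisle of a $t$-structure and hence closed under extensions, it will suffice to show that the two outer vertices belong to $\CU_{\phi_i}^{\perp}$.

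For the first vertex, I would use the hypothesis $i\leq j$: we have $\CU_{\phi_i}=\cu{i}{Z_i}\subset \Dleq{i}(R)\subset \Dleq{j}(R)$, whence the standard $t$-structure yields $\Dg{j}(R)\subset (\Dleq{j}(R))^\perp\subset \CU_{\phi_i}^{\perp}$. The vertex $\tg{j}{}\R\Gamma_{Z_j}\Mdot$ is manifestly in $\Dg{j}(R)$, so this case is immediate.

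For the second vertex, I would invoke \S\ref{util}(2) to commute $\R\Gamma_{Z_i}$ past $\R Q_{Z_j}$, obtaining $\R\Gamma_{Z_i}\R Q_{Z_j}\Mdot \cong \R Q_{Z_j}\R\Gamma_{Z_i}\Mdot$. By Corollary~\ref{ejemplo}, the hypothesis $\Mdot\in\CU_{\phi_i}^{\perp}$ is equivalent to $\R\Gamma_{Z_i}\Mdot\in\Dg{i}(R)$. So the problem reduces to the key claim that $\R Q_{Z_j}$ preserves $\Dg{i}(R)$. This is the main obstacle, since it is a statement that $\R Q_{Z_j}$ does not shift cohomology downward, and it is not obvious from the abstract formalism of the Bousfield localization.

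To establish this claim, for $\Xdot\in\Dg{i}(R)$ I would analyze the triangle $\R\Gamma_{Z_j}\Xdot \to \Xdot \to \R Q_{Z_j}\Xdot \overset{+}{\to}$ using the hypercohomology spectral sequence
\[
E_2^{p,q}=\h^p\Gamma_{Z_j}(\h^q(\Xdot))\Longrightarrow \h^{p+q}(\R\Gamma_{Z_j}\Xdot),
\]
which converges because $\Xdot$ is bounded below. Since $\h^q(\Xdot)=0$ for $q\leq i$, the spectral sequence degenerates in low total degree: it gives $\h^n(\R\Gamma_{Z_j}\Xdot)=0$ for $n\leq i$, and for $n=i+1$ only the column $p=0$ contributes, so that the natural morphism $\h^{i+1}(\R\Gamma_{Z_j}\Xdot)\to \h^{i+1}(\Xdot)$ identifies with the torsion inclusion $\Gamma_{Z_j}(\h^{i+1}(\Xdot))\inc \h^{i+1}(\Xdot)$, which is injective. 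Feeding these facts into the long exact sequence of cohomology derived from the triangle yields $\h^n(\R Q_{Z_j}\Xdot)=0$ for all $n\leq i$, proving the claim. Applying this with $\Xdot=\R\Gamma_{Z_i}\Mdot$ finishes the proof of the lemma.
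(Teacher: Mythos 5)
Your proof is correct and follows the same route as the paper: split $\tg{}{\phi_j}M$ via the triangle from \S\ref{parausar}, observe that $\tg{j}{}\R\Gamma_{Z_j}M\in\Dg{j}(R)\subset\CU_{\phi_i}^{\perp}$, and for the third vertex use the commutation $\R\Gamma_{Z_i}\R Q_{Z_j}\cong\R Q_{Z_j}\R\Gamma_{Z_i}$ from \S\ref{util}. The one place you diverge is the step you label ``the main obstacle,'' namely that $\R Q_{Z_j}$ preserves $\Dg{i}(R)$; there is no obstacle here. By the construction recalled in \S\ref{util0}, $Q_{Z_j}$ is the (left exact) abelian localization functor on $\Mod(R)$ and $\R Q_{Z_j}$ is computed by applying $Q_{Z_j}$ to a K-injective resolution, i.e.\ it is the right derived functor of a left exact functor and therefore sends $\Dgeq{n}(R)$ into $\Dgeq{n}(R)$ for every $n$ by general nonsense. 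Your hypercohomology spectral-sequence argument is valid but reproves from scratch something already built into the definition; the paper simply invokes it in one line.
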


\begin{proof}
By \ref{parausar}  there is a distinguished triangle in $\D(R)$
  \[
   \tg{j}{}\R\Gamma_{{Z_j}} M {\lto}\,\, \tg{ }{{\phi_j}}{} M
   \lto \R Q_{{Z_j}} M \overset{+}{\lto}
  \]
First note that $\R\Gamma_{{Z_i}} \tg{j}{}\R\Gamma_{{Z_j}} M$ belongs to $\Dg{j}(R)$ and as a consequence
  \[
   \tleq{ }{{\phi_i}} {} \tg{j}{}\R\Gamma_{{Z_j}} M =
   \tleq{i}{}\R\Gamma_{{Z_i}} \tg{j}{}\R\Gamma_{{Z_j}}
   M =0,
  \]
that is, $\tg{j}{}\R\Gamma_{{Z_j}} M\in {\CU_{\phi_i}}^{\perp}$. Let us check that $\R Q_{{Z_j}} M \in {\CU_{\phi_i}}^{\perp}$. This follows from the canonical isomorphisms
  \[
   \tleq{ }{{\phi_i}} {} \R Q_{{Z_j}} M =\tleq{i}{} \R\Gamma_{Z_i}
   \R Q_{{Z_j}} M \cong \tleq{i}{} \R Q_{{Z_j}}
   \R\Gamma_{{Z_i}} M.
  \]
Indeed, $M \in {\CU_{\phi_i}}^{\perp}$ therefore $\R\Gamma_{{Z_i}} M \in \Dg{i}(R),$ hence $\R Q_{{Z_j}}\R\Gamma_{{Z_i}} M \in \Dg{i}(R)$, thus
  \(
   \tleq{ }{{\phi_i}} \R Q_{Z_j} M \cong
   \tleq{i}{}\R Q_{Z_j}
   \R\Gamma_{Z_i} M=0 
  \) 
\end{proof}

\begin{prop}
\label{Kashiwara=ours}
Let $\phi:\ZZ\to\CP(\spec(R))$ be a finite sp-filtration determined in the interval $[s,n]\subset \ZZ$.  Then $\tleq{}{\phi} \Xdot \in\Dleq{n}(R)$ for all $X \in \D(R)$. Furthermore, using the notation in~\ref{parausar2}:
 \begin{enumerate}
    \item The right truncation functor $\tg{}{\phi}$ is computed as
          the composition
          \[
           \tg{ }{\phi_n}{}\tg{ }{\phi_{n-1}}{}\cdots\,\tg{ }{\phi_s}{}.
          \]
    \item For all $X \in \D(R)$, $\tleq{ }{\phi_n}{}\tg{}{\phi'}
          X \in\Dcacb{n}{n}(R)$.
 \end{enumerate}
\end{prop}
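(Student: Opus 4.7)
The initial bound is immediate: by Theorem~\ref{classification-c-g} we have $\CU_\phi=\{X\in\D(R)\,;\,\supp(\h^j(X))\subset\phi(j)\text{ for all }j\in\ZZ\}$, so the assumption $\phi(n+1)=\varnothing$ forces $\CU_\phi\subset\Dleq{n}(R)$, whence $\tleq{}{\phi}X\in\Dleq{n}(R)$ for every $X\in\D(R)$.

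For part~(1) I would set $Y_s:=\tg{}{\phi_s}X$ and iteratively $Y_i:=\tg{}{\phi_i}Y_{i-1}$ for $s<i\leq n$, and then show that the fiber $U$ of the composite $X\to Y_s\to\cdots\to Y_n$ sits in a distinguished triangle $U\to X\to Y_n\overset{+}{\lto}$ which is \emph{the} truncation triangle for $\phi$. The orthogonality $Y_n\in\CU_\phi^\perp$ is the core point: Lemma~\ref{componiendo-2-trucados-derecha} guarantees that applying $\tg{}{\phi_i}$ preserves orthogonality to each $\CU_{\phi_k}$ with $k<i$, while orthogonality to $\CU_{\phi_i}$ itself is the defining property of $\tg{}{\phi_i}$; induction on $i$ therefore yields $Y_n\in\bigcap_{k=s}^n\CU_{\phi_k}^\perp$, and this intersection equals $\CU_\phi^\perp$ because $\CU_\phi$ is the aisle generated by $\bigcup_k\CU_{\phi_k}$. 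For the membership $U\in\CU_\phi$ I would iterate the octahedron axiom on the factorizations $X\to Y_{i-1}\to Y_i$: each one-step fiber $\tleq{}{\phi_i}Y_{i-1}$ lies in $\CU_{\phi_i}\subset\CU_\phi$, and closure of $\CU_\phi$ under extensions then propagates membership to the cumulative fiber $U$. Uniqueness of the truncation triangle identifies $Y_n=\tg{}{\phi}X$.

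For part~(2) I would exploit that $\tg{}{\phi'}X\in\CU_{\phi'}^\perp$. By the explicit description of the right orthogonal in Theorem~\ref{classification-c-g}, this reads $\R\Gamma_{\phi'(j)}\tg{}{\phi'}X\in\Dg{j}(R)$ for every $j\in\ZZ$; in particular, with $j=n-1$ and $\phi'(n-1)=\phi(n-1)$, I obtain $\R\Gamma_{\phi(n-1)}\tg{}{\phi'}X\in\Dgeq{n}(R)$. Since $\phi(n)\subseteq\phi(n-1)$, the isomorphism $\R\Gamma_{\phi(n)}\cong\R\Gamma_{\phi(n)}\R\Gamma_{\phi(n-1)}$ recorded in~\ref{util}(1) together with the left $t$-exactness of $\R\Gamma_{\phi(n)}$ gives $\R\Gamma_{\phi(n)}\tg{}{\phi'}X\in\Dgeq{n}(R)$. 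Finally $\tleq{}{\phi_n}=\tleq{n}{}\R\Gamma_{\phi(n)}$ by Corollary~\ref{ejemplo} (as recalled in~\ref{parausar}) and $\tleq{n}{}$ preserves $\Dgeq{n}(R)$, so $\tleq{}{\phi_n}\tg{}{\phi'}X\in\Dcacb{n}{n}(R)$, as claimed.

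The genuine difficulty lies in~(1): without Lemma~\ref{componiendo-2-trucados-derecha}, applying $\tg{}{\phi_i}$ could a priori destroy orthogonality against the earlier $\CU_{\phi_k}$, so that lemma is doing the essential work. Part~(2) is then a short consequence of the explicit form of $\CU_{\phi'}^\perp$ combined with the standard manipulations of the local cohomology functors $\R\Gamma_Z$ collected in~\ref{util}.
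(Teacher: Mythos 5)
Your proof is correct and, for part~(2), genuinely streamlines the paper's argument. For the preliminary bound and for part~(1) you follow essentially the same path as the paper: the explicit description of $\CU_\phi$ from Theorem~\ref{classification-c-g} gives $\CU_\phi\subset\Dleq{n}(R)$, and the identification of $\tg{}{\phi}$ with the iterated composite rests on Lemma~\ref{componiendo-2-trucados-derecha} to secure orthogonality of $Y_n$ against every $\CU_{\phi_k}$, together with an octahedron to put the cumulative fiber in $\CU_\phi$. The only cosmetic difference is direction: you build $Y_s,\dots,Y_n$ forward and run a forward induction, whereas the paper peels off $\phi_n$ and inducts downward on $\lth(\phi)$; the two organizations are interchangeable and carry the same content.

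For part~(2), however, you take a noticeably shorter route. The paper introduces the auxiliary functor $\Gamma_{Z_{n-1}/Z_n}$ defined by the short exact sequence $0\to\Gamma_{Z_n}\to\Gamma_{Z_{n-1}}\to\Gamma_{Z_{n-1}/Z_n}\to 0$, derives the corresponding triangle, and then uses a morphism of distinguished triangles and the vanishing $\R\Gamma_n\R\Gamma_{n-1/n}=0$ to obtain $\R\Gamma_{n-1/n}\Ydot\cong\R\Gamma_{n-1/n}\R\Gamma_{n-1}\Ydot$, finally reading off the bound on $\R\Gamma_n\Ydot$ from the triangle. You bypass all of that: from $\tg{}{\phi'}X\in\CU_{\phi'}^\perp$ you extract $\R\Gamma_{\phi(n-1)}\tg{}{\phi'}X\in\Dgeq{n}(R)$ via the description of the co-aisle, and then the idempotence isomorphism $\R\Gamma_{\phi(n)}\cong\R\Gamma_{\phi(n)}\R\Gamma_{\phi(n-1)}$ from~\ref{util}(1) together with left $t$-exactness of $\R\Gamma_{\phi(n)}$ (already used in the proof of Lemma~\ref{corrector}) gives $\R\Gamma_{\phi(n)}\tg{}{\phi'}X\in\Dgeq{n}(R)$ directly. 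This avoids introducing $\Gamma_{Z/Z'}$ entirely and is the cleaner argument; the paper's detour buys nothing here, since the relative local cohomology functor plays no further role in the rest of the paper.
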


\begin{proof}
For simplicity we assume in the proof that $[s,n]$ is the interval $[0,n]$. Let us set $Z_i:=\phi(i)$ for each $i\in[0,n]$. Trivially $\tleq{}{\phi} \Xdot \in\Dleq{n}(R)$. In order to prove the statement $(1)$ we proceed by induction on the length of the sp-filtration $\phi$. The case $\lth(\phi)=n+1=1$ is trivial.
Let $n> 0$, then $1\leq \lth(\phi')<\lth(\phi)$. By inductive hypothesis the result is true for $\phi'.$
Let us consider the commutative diagram of distinguished triangles
  \[
   \begin{tikzpicture}
      \draw[white] (0cm,2cm) -- +(0: \linewidth)
      node (E) [black, pos = 0.45] {$\Xdot$}
      node (H) [black, pos = 0.92] {\mbox{ }}
      node (F) [black, pos = 0.8] {$\tg{ }{\phi_n}{}\tg{}{\phi'} \Xdot$};
      \draw[white] (0cm,1.675cm) -- +(0: \linewidth)
      node (Z) [black, pos = 0.1] {}
      node (L) [black, pos = 0.9] {};
      \draw[white] (0cm,0.7cm) -- +(0: \linewidth)
      node (A) [black, pos = 0.16] {$\tleq{}{\phi'}X$};
      \draw[white] (0cm,3.05cm) -- +(0: \linewidth)
      node (W) [black, pos = 0.1] {}
      node (T) [black, pos = 0.9] {};
      \draw[white] (0cm,3.25cm) -- +(0: \linewidth)
      node (C) [black, pos = 0.37] {$\tleq{ }{\phi_n}{}{\tg{}{\phi'}} \Xdot$};
      \draw[white] (0cm,4.2cm) -- +(0: \linewidth)
      node (U) [black, pos = 0.1] {}
      node (V) [black, pos = 0.9] {};
      \node (B) at (intersection of A--C and F--E) {$N$};
      \node (K) at (intersection of A--E and C--F) {$\tg{ }{\phi'} \Xdot$};
      \node (D) at (intersection of A--B and U--V) {};
      \node (I) at (intersection of A--E and W--T) {};
      \node (J) at (intersection of K--F and Z--L) {};
      \draw [->] (A) -- (B) ;
      \draw [->] (B) -- (C);
      \draw [->] (C) -- (D) node[above, midway, sloped, scale=0.75]{$+$};
      \draw [->] (B) -- (E);
      \draw [->] (E) -- (F) node[below, midway, scale=0.75]{$w$};
      \draw [->] (K) -- (I) node[above, midway, sloped, scale=0.75]{$+$};
      \draw [->] (C) -- (K);
      \draw [->] (K) -- (F) node[above, midway, scale=0.75]{$v$};
      \draw [->] (E) -- (K) node[above, midway, scale=0.75]{$u$};
      \draw [->] (F) -- (J) node[below, midway, sloped, scale=0.75]{$+$};
      \draw [->] (F) -- (H) node[above, midway, sloped, scale=0.75]{$+$};
      \draw [->] (A) -- (E);
   \end{tikzpicture}
  \]
built up from the commutative diagram $w=v {\circ} u$ and the octahedron axiom. The triangle
  \[
   \tleq{}{\phi'}X {\lto}\,\, N \lto 
   \tleq{ }{\phi_n}{}\tg{}{\phi'} X \overset{+}{\lto}
  \]
proves that $\Ndot\in \CU_\phi$ because $\tleq{}{\phi'}X\in \CU_{\phi'}\subset \CU_\phi$ and $\tleq{ }{\phi_n}{}\tg{}{\phi'} X \in\cu{ }{\phi_n}\subset \CU_\phi$. Note that ${\CU_{\phi'}}^{\perp}=\cap_{i=0}^{n-1}{\CU_{\phi_i}}^{\perp}$ and ${\CU_{\phi}}^{\perp}={\CU_{\phi'}}^{\perp}\cap{\CU_{\phi_n}}^{\perp}$, therefore Lemma~\ref{componiendo-2-trucados-derecha} shows that $\tg{ }{\phi_n}{}\tg{}{\phi'} X$ belongs to $\CU_\phi^{\,\,\perp}.$ Hence the horizontal triangle in the above diagram is the $\phi$-triangle with central vertex $X$, that is $\tleq{}{\phi} X=\Ndot$ and $\tg{}{\phi} X=\tg{ }{\phi_n}{}\tg{}{\phi'} X$.

The next question we address is to show that $\tleq{ }{\phi_n}{} \tg{}{{\phi^\prime}} X=\tleq{n}{}\R\Gamma_{\phi(n)} \tg{}{{\phi^\prime}} X$ belongs to $\Dcacb{n}{n}(R)$ or, equivalently, that $\R \Gamma_{Z_n} \tg{}{{\phi^\prime}} X \in\Dg{n-1}(R)$.
To prove it we begin by recalling some useful results. Let
  $
    \Gamma_{{Z_{n-1}}/{Z_n}}\colon  \Mod(R){\to}\Mod(R)
  $
be the functor determined by the short exact sequence of functors in $\Mod(R)$
 \[
  0{\lto}\Gamma_{{Z_n}} {\lto}\,\, \Gamma_{{Z_{n-1}}}  \lto 
  \Gamma_{{Z_{n-1}}/{Z_n}}{\lto}0.
 \]
({\cfr} \cite[variation 2 on p. 219]{RD}). Write $\Gamma_{n-1/n}=\Gamma_{{Z_{n-1}}/{Z_n}}$, $\Gamma_n=\Gamma_{Z_n}$ for each $n\in\ZZ$. Deriving these functors on the right the above abelian exact sequence provide a  distinguished triangle for any $Y\in\D(R)$
 \begin{equation}\label{no-repetir}
   \R\Gamma_n Y {\lto}\,\, \R\Gamma_{n-1}Y \lto 
   \R\Gamma_{n-1/n} Y \overset{+}{\lto}
 \end{equation}
Applying the functor $\R\Gamma_n$ to this triangle we get $\R\Gamma_n\R\Gamma_{n-1/n} Y =0$ since $\R\Gamma_n\R\Gamma_{n-1}Y\to \R\Gamma_n\R\Gamma_n Y = \R\Gamma_n Y$ is an isomorphism ({\cfr} \S~\ref{util}). Furthermore the natural transformation $\R\Gamma_{n-1}\to 1$ induces  morphisms of distinguished triangles
  \[
   \begin{tikzpicture}
      \draw[white] (0cm,0.5cm) -- +(0: \linewidth)
      node (32) [black, pos = 0.15] {$\R\Gamma_{n}\R\Gamma_{n-1} Y$}
      node (33) [black, pos = 0.4] {$\R\Gamma_{n-1}\R\Gamma_{n-1} Y$}
      node (34) [black, pos = 0.7] {$\R\Gamma_{{n-1}/n}\R\Gamma_{n-1} Y$}
      node (35) [black, pos = 0.9] {$ $};
      \draw[white] (0cm,1.75cm) -- +(0: \linewidth)
      node (22) [black, pos = 0.15] {$\R\Gamma_{n} Y$}
      node (23) [black, pos = 0.4] {$ \R\Gamma_{n-1} Y$}
      node (24) [black, pos = 0.7] {$\R\Gamma_{{n-1}/n} Y$}
      node (25) [black, pos = 0.9] {$$};
      \draw[white] (0cm,3cm) -- +(0: \linewidth)
      node (12) [black, pos = 0.15] {$\R\Gamma_{n-1}\R\Gamma_{n} Y$}
      node (13) [black, pos = 0.4] {$\R\Gamma_{n-1}\R\Gamma_{n-1} Y$}
      node (14) [black, pos = 0.7] {$\R\Gamma_{n-1}\R\Gamma_{{n-1}/n} Y$}
      node (15) [black, pos = 0.9] {$ $};
      \draw [->] (12) -- (13) node[above, midway, scale=0.75]{$ $};
      \draw [->] (13) -- (14) node[above, midway, scale=0.75]{$ $};
      \draw [->] (14) -- (15) node[above, midway, scale=0.75]{$+$};
      \draw [->] (22) -- (23) node[above, midway, scale=0.75]{$ $};
      \draw [->] (23) -- (24) node[above, midway, scale=0.75]{$ $};
      \draw [->] (24) -- (25) node[above, midway, scale=0.75]{$+$};
      \draw [->] (32) -- (33) node[above, midway, scale=0.75]{$ $};
      \draw [->] (33) -- (34) node[above, midway, scale=0.75]{$ $};
      \draw [->] (34) -- (35) node[above, midway, scale=0.75]{$+$};
      \draw [->] (12) -- (22) node[right, midway, scale=0.75]{$\wr$};
      \draw [->] (13) -- (23) node[right, midway, scale=0.75]{$\wr$};
      \draw [->] (14) -- (24) node[right, midway, scale=0.75]{};
      \draw [->] (32) -- (22) node[right, midway, scale=0.75]{$\wr$};
      \draw [->] (33) -- (23) node[right, midway, scale=0.75]{$\wr$};
      \draw [->] (34) -- (24) node[right, midway, scale=0.75]{};
   \end{tikzpicture}
  \]
that are in fact isomorphisms of triangles because the two vertical maps on the left are isomorphisms (see  \S \ref{util}). Therefore
  \[
   \R\Gamma_{{{n-1}}}\R\Gamma_{{{n-1}}/{n}} Y
   \cong{\R\Gamma_{{n-1/n}}}\R\Gamma_{{{n-1}}}
   Y\cong{\R\Gamma_{{n-1/n}}} Y
  \]
for all $\Ydot\in\D(R)$.

Going back to our aim, set $\Ydot:=\tg{}{{\phi'}}X$. Note that $\Ydot\in(\cu{ }{{\phi_{n-1}}})^\perp$, so that $\tleq{n-1}{}\R \Gamma_{n-1}\Ydot=0$, that is, $\R\Gamma_{n-1}\Ydot\in\Dg{n-1}(R)$. But then  $\R\Gamma_{n-1/n} \Ydot \cong \R\Gamma_{n-1/n}\R \Gamma_{{{n-1}}} \Ydot$ also belongs to $\Dg{n-1}(R)$. Now  the existence of the triangle (\ref{no-repetir}) allows us to conclude that $\R\Gamma_{{n}}\tg{}{\phi'}X=\R\Gamma_{{n}}\Ydot\in \Dg{n-1}(R)$, as desired.
\end{proof}

\begin{cor}
\label{segundo-cor-lema}
Let us consider the notation in the above proposition. Then for each $X\in\D(R)$ it holds that $\tleq{ }{\phi_n}{}{\tg{}{\phi'}} \Xdot \cong \h^n(\tleq{}{\phi}X)[-n]$ and there is a diagram of distinguished triangles in $\D(R)$
  \[
   \begin{tikzpicture}
      \draw[white] (0cm,2cm) -- +(0: \linewidth)
      node (E) [black, pos = 0.45] {$\Xdot$}
      node (H) [black, pos = 0.92] {\mbox{ }}
      node (F) [black, pos = 0.8] {$\tg{}{\phi} \Xdot$};
      \draw[white] (0cm,1.675cm) -- +(0: \linewidth)
      node (Z) [black, pos = 0.1] {}
      node (L) [black, pos = 0.9] {};
      \draw[white] (0cm,0.7cm) -- +(0: \linewidth)
      node (A) [black, pos = 0.16] {$\tleq{}{\phi'}X$};
      \draw[white] (0cm,3.05cm) -- +(0: \linewidth)
      node (W) [black, pos = 0.1] {}
      node (T) [black, pos = 0.9] {};
      \draw[white] (0cm,3.25cm) -- +(0: \linewidth)
      node (C) [black, pos = 0.37] {$\tleq{ }{\phi_n}{}{\tg{}{\phi'}} \Xdot$};
      \draw[white] (0cm,4.2cm) -- +(0: \linewidth)
      node (U) [black, pos = 0.1] {}
      node (V) [black, pos = 0.9] {};
      \node (B) at (intersection of A--C and F--E) {$\tleq{}{\phi}X$};
      \node (K) at (intersection of A--E and C--F) {$\tg{ }{\phi'} \Xdot$};
      \node (D) at (intersection of A--B and U--V) {};
      \node (I) at (intersection of A--E and W--T) {};
      \node (J) at (intersection of K--F and Z--L) {};
      \draw [->] (A) -- (B) ;
      \draw [->] (B) -- (C);
      \draw [->] (C) -- (D) node[above, midway, sloped, scale=0.75]{$+$};
      \draw [->] (B) -- (E);
      \draw [->] (E) -- (F) node[below, midway, scale=0.75]{$w$};
      \draw [->] (K) -- (I) node[above, midway, sloped, scale=0.75]{$+$};
      \draw [->] (C) -- (K);
      \draw [->] (K) -- (F) node[above, midway, scale=0.75]{$v$};
      \draw [->] (E) -- (K) node[above, midway, scale=0.75]{$u$};
      \draw [->] (F) -- (J) node[below, midway, sloped, scale=0.75]{$+$};
      \draw [->] (F) -- (H) node[above, midway, sloped, scale=0.75]{$+$};
      \draw [->] (A) -- (E);
   \end{tikzpicture}
  \]
in which:
  \begin{enumerate}
     \item the triangle
           $
            \tleq{ }{\phi_n}{}{\tg{}{\phi'}} \Xdot {\lto}\,\,
            \tg{}{\phi'}X
            \lto  \tg{}{\phi} X \overset{+}{\lto}
           $
           is canonically isomorphic to
           $
            \tleq{}{\phi}\tg{}{\phi'} X {\lto}\,\,
            \tg{}{\phi'}X \lto \tg{}{\phi} \tg{}{\phi'} X
            \overset{+}{\lto}; and
           $
     \item the triangle
           $
            \tleq{}{\phi'}X {\lto}\,\, \tleq{}{\phi}X \lto 
            \tleq{}{\phi_n}{}{\tg{}{\phi'}} \Xdot \overset{+}{\lto}
           $
           is canonically isomorphic to
           $
            \tleq{n-1}{}\tleq{}{\phi}X {\lto}\,\, \tleq{}{\phi}X
            \lto  \tg{n-1}{} \tleq{}{\phi}X \overset{+}{\lto}.
           $
  \end{enumerate}
\end{cor}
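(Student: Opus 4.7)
My plan is to read the entire corollary off from the octahedron already constructed in the proof of Proposition~\ref{Kashiwara=ours}: that octahedron exhibits two distinguished triangles with central vertices $\tg{}{\phi'}X$ and $\tleq{}{\phi}X$ respectively, so the content of the corollary reduces to identifying each of them, via uniqueness of $t$-structure truncation triangles, with a $\phi$-truncation triangle (for part (1)) and with a standard $t$-structure truncation triangle (for part (2)).

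For part (1), I will take the triangle $\tleq{}{\phi_n}\tg{}{\phi'}X \lto \tg{}{\phi'}X \lto \tg{}{\phi}X \stackrel{+}{\lto}$ from the octahedron and check that its left vertex lies in $\cu{n}{\phi(n)}\subset\CU_\phi$ (immediate) and its right vertex lies in $\CU_\phi^{\,\perp}$ (already granted inside the proof of Proposition~\ref{Kashiwara=ours} via Lemma~\ref{componiendo-2-trucados-derecha}). Uniqueness of the $\phi$-triangle then identifies it canonically with the $\phi$-triangle of its middle vertex $\tg{}{\phi'}X$. To finish (1) I still owe the identification $\tg{}{\phi}\tg{}{\phi'}X\cong \tg{}{\phi}X$, which I get by applying the functor $\tg{}{\phi}$ to the $\phi'$-triangle of $X$: since $\tleq{}{\phi'}X\in\CU_{\phi'}\subset\CU_\phi$, it is annihilated, and the required isomorphism follows.

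For part (2), I will argue symmetrically with the complementary triangle $\tleq{}{\phi'}X \lto \tleq{}{\phi}X \lto \tleq{}{\phi_n}\tg{}{\phi'}X \stackrel{+}{\lto}$ of the same octahedron. Because $\phi'(n)=\varnothing$ we have $\CU_{\phi'}\subset \Dleq{n-1}(R)$, so the left vertex belongs to $\Dleq{n-1}(R)$, while by Proposition~\ref{Kashiwara=ours}(2) the right vertex lies in $\Dcacb{n}{n}(R)\subset\Dg{n-1}(R)$. Uniqueness of the standard $(n-1)$-truncation triangle then identifies the displayed triangle canonically with that of $\tleq{}{\phi}X$, giving $\tleq{}{\phi_n}\tg{}{\phi'}X\cong \tg{n-1}{}\tleq{}{\phi}X$; and because $\tleq{}{\phi}X\in\Dleq{n}(R)$ (established in Proposition~\ref{Kashiwara=ours}), this right-hand side is a stalk complex in degree $n$, equal to $\h^n(\tleq{}{\phi}X)[-n]$. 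I do not expect a serious obstacle: all the substantive work has already been absorbed into Proposition~\ref{Kashiwara=ours} and Lemma~\ref{componiendo-2-trucados-derecha}, and the remaining argument is pure bookkeeping with uniqueness of truncation triangles in two different $t$-structures.
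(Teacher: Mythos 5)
Your argument matches the paper's proof: both read the displayed diagram off the octahedron built in the proof of Proposition~\ref{Kashiwara=ours} and identify the two outer triangles by uniqueness of truncation triangles, using $\tleq{}{\phi_n}\tg{}{\phi'}X\in\CU_{\phi_n}\subset\CU_\phi$ together with $\tg{}{\phi}X\in\CU_\phi^\perp$ for part (1), and $\tleq{}{\phi'}X\in\Dleq{n-1}(R)$ together with $\tleq{}{\phi_n}\tg{}{\phi'}X\in\Dcacb{n}{n}(R)\subset\Dg{n-1}(R)$ for part (2). Your separate derivation of $\tg{}{\phi}\tg{}{\phi'}X\cong\tg{}{\phi}X$ is redundant (it is already contained in the identification of the triangle in (1)), but it is correct and harmless.
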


\begin{proof}
The diagram whose existence we assert is the diagram of distinguished triangles at the beginning of the proof of Proposition~\ref{Kashiwara=ours}. From the very same proof note that $\tleq{ }{\phi_n}{}\tg{}{\phi'} X\in\CU_{\phi_n}\subset \CU_\phi$ hence the triangle
  \[
   \tleq{ }{\phi_n}{}\tg{}{\phi'} X {\lto}\,\, \tg{}{\phi'}X \lto 
   \tg{}{\phi} X \overset{+}{\lto}
  \]
is the ${\phi}$-triangle with central vertex $\tg{}{\phi'} X$, so assertion $(1)$ follows.

We also derive from Proposition~\ref{Kashiwara=ours} that $(2)$ holds true, since $\tleq{}{\phi'}X\in\Dleq{n-1}(R)$  and $\tleq{ }{\phi_n}{}{\tg{}{\phi'}} X\in\Dcacb{n}{n}(R)\subset \Dg{n-1}(R)$. And also as a consequence $\tleq{ }{\phi_n}{}{\tg{}{\phi'}} \Xdot \cong \h^n(\tleq{}{\phi}X)[-n]$.
\end{proof}


\begin{lem}
\label{induction-tool}
Let $\sharp\in\{-,+,\bb,\text{``blank''}\}$ and let $\phi$ be a finite sp-filtration of $\spec(R)$ determined in the interval $[s,n]\subset \ZZ$. The following statements are equivalent:
  \begin{enumerate}
   \item $\CU_\phi\cap\Dc^\sharp(R)$ is an aisle of $\Dc^\sharp(R)$.
   \item $\CU_{\phi '}\cap\Dc^\sharp(R)$ is an aisle of $\Dc^\sharp(R)$
         and $\h^{n}(\R\Gamma_{\phi(n)}M )$ is a finitely
         generated $R$-module, for every
         $M\in\CU_{{\phi'}}^\perp\cap\Dc^\sharp(R)$.
   \item $\CU_{{\phi '}}\cap\Dc^\sharp(R)$ is an aisle of
         $\Dc^\sharp(R)$ and $\tleq{n}{}(\R\Gamma_{\phi(n)}M)
         \in\Dc^\sharp(R)$, for every
         $M\in\CU_{{\phi'}}^\perp\cap\Dc^\sharp(R)$.
 \end{enumerate}
\end{lem}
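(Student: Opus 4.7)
The plan is to leverage the distinguished triangle supplied by Corollary~\ref{segundo-cor-lema}, which, for every $X\in\D(R)$, reads
$$\tleq{}{\phi'}X\lto\tleq{}{\phi}X\lto\tleq{n}{}\R\Gamma_{\phi(n)}(\tg{}{\phi'}X)\overset{+}{\lto}\qquad(\ast)$$
and comes with the canonical identification $\tleq{}{\phi'}X\cong\tleq{n-1}{}\tleq{}{\phi}X$. I will use throughout the reformulation of $(1)$ recalled in \S\ref{notation}, namely that $\CU_\phi\cap\Dc^\sharp(R)$ is an aisle precisely when $\tleq{}{\phi}$ preserves $\Dc^\sharp(R)$, and analogously for $\phi'$.

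The implications $(1)\Rightarrow(3)\Rightarrow(2)$ will be handled formally. First, if $(1)$ holds and $X\in\Dc^\sharp(R)$, then $\tleq{}{\phi'}X\cong\tleq{n-1}{}\tleq{}{\phi}X$ belongs to $\Dc^\sharp(R)$ because the standard truncation $\tleq{n-1}{}$ preserves each $\Dc^\sharp(R)$; so $\CU_{\phi'}\cap\Dc^\sharp(R)$ is an aisle. For $M\in\CU_{\phi'}^\perp\cap\Dc^\sharp(R)$, the identities $\tleq{}{\phi'}M=0$ and $\tg{}{\phi'}M=M$ collapse $(\ast)$ into $\tleq{}{\phi}M\cong\tleq{n}{}\R\Gamma_{\phi(n)}M$, which lies in $\Dc^\sharp(R)$ by $(1)$. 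The step $(3)\Rightarrow(2)$ is then immediate, since $\h^n(\tleq{n}{}\R\Gamma_{\phi(n)}M)=\h^n(\R\Gamma_{\phi(n)}M)$.

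The substantial step is $(2)\Rightarrow(1)$. Given $X\in\Dc^\sharp(R)$, I put $M:=\tg{}{\phi'}X$, which lies in $\CU_{\phi'}^\perp\cap\Dc^\sharp(R)$ thanks to the aisle hypothesis for $\phi'$ in $(2)$. The description of $\CU_{\phi'}^\perp$ given by Theorem~\ref{classification-c-g}, applied at the index $n-1$ where $\phi'(n-1)=\phi(n-1)$, furnishes $\R\Gamma_{\phi(n-1)}M\in\Dg{n-1}(R)$. Combining the inclusion $\phi(n)\subseteq\phi(n-1)$ with the idempotency recalled in \S\ref{util} gives the isomorphism $\R\Gamma_{\phi(n)}M\cong\R\Gamma_{\phi(n)}\R\Gamma_{\phi(n-1)}M$; since $\R\Gamma_{\phi(n)}$ preserves $\Dg{n-1}(R)$, I conclude $\R\Gamma_{\phi(n)}M\in\Dg{n-1}(R)$. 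Hence $\tleq{n}{}\R\Gamma_{\phi(n)}M$ is concentrated in degree $n$ and equal to $\h^n(\R\Gamma_{\phi(n)}M)[-n]$, which is finitely generated by $(2)$ and so lies in $\Dbc(R)\subset\Dc^\sharp(R)$. Feeding this into $(\ast)$, whose left vertex already belongs to $\Dc^\sharp(R)$, exhibits $\tleq{}{\phi}X$ as an extension of two objects of $\Dc^\sharp(R)$, yielding $(1)$. The main obstacle is precisely the vanishing of $\h^i(\R\Gamma_{\phi(n)}M)$ for $i<n$ from the mere membership $M\in\CU_{\phi'}^\perp$; the identity $\R\Gamma_{\phi(n)}\R\Gamma_{\phi(n-1)}\cong\R\Gamma_{\phi(n)}$ is the lever that converts the single-index bound provided by $\phi'$ into control of the entire truncation at $\phi(n)$, so that the hypothesis on one cohomology module in $(2)$ is enough.
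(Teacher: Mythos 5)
Your proof is correct and follows the same strategy as the paper: both build on the octahedral triangle of Corollary~\ref{segundo-cor-lema} and the observation that for $M\in\CU_{\phi'}^\perp$ one has $\tleq{}{\phi}M\cong\tleq{n}{}\R\Gamma_{\phi(n)}M\cong\h^n(\R\Gamma_{\phi(n)}M)[-n]$, together with the identity $\tleq{}{\phi'}X\cong\tleq{n-1}{}\tleq{}{\phi}X$ to transfer the aisle property between $\phi$ and $\phi'$. Two modest streamlinings are worth noting. First, where the paper obtains the concentration of $\tleq{n}{}\R\Gamma_{\phi(n)}M$ in degree $n$ by invoking Proposition~\ref{Kashiwara=ours}(2) (whose proof goes through the auxiliary functor $\Gamma_{\phi(n-1)/\phi(n)}$), you derive it directly from the single condition $\R\Gamma_{\phi(n-1)}M\in\Dg{n-1}(R)$ via the idempotency $\R\Gamma_{\phi(n)}\R\Gamma_{\phi(n-1)}\cong\R\Gamma_{\phi(n)}$ of \S\ref{util} and the fact that $\R\Gamma$ preserves $\Dgeq{}{}$-bounds; this is shorter and avoids re-proving the full statement of Proposition~\ref{Kashiwara=ours}(2) in this special case. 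Second, the paper first reduces to $\sharp=\text{``blank''}$ using the remark after Corollary~\ref{discreteness-of-filtration2}, whereas your argument handles all four boundedness conditions uniformly because each step manifestly respects $\Dc^\sharp(R)$; both routes are sound, and the reduction is strictly unnecessary. These are refinements of exposition rather than a different method.
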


\begin{proof}
As a consequence of the remark after Corollary~\ref{discreteness-of-filtration2}, it is enough to prove the current Lemma for $\sharp = \text{``blank''}$. 

Take $M\in\CU_{{\phi'}}^\perp\cap\Dc(R)$ so the canonical map $\Mdot\to\tg{}{{\phi'}}\Mdot$ is an isomorphism. Then $\tleq{}{\phi}\Mdot\cong \tleq{ }{\phi_n}{}\Mdot$ by Corollary~\ref{segundo-cor-lema}$(1)$. From the initial statement of that same corollary we get that $\tleq{}{\phi}\Mdot\cong \tleq{ }{\phi_n}{}\Mdot = \tleq{n}{}\R\Gamma_{\phi(n)}\Mdot \cong \h^{n}(\R\Gamma_{\phi(n)}\Mdot)[-n].$ This proves $(2)\dimp (3)$. And also proves that $\tleq{}{\phi}\Mdot\in\Dc(R)$ (equivalently $\tg{}{\phi}\Mdot\in\Dc(R)$) if and only if $\h^{n}(\R\Gamma_{\phi(n)}\Mdot)$ is a finitely generated $R$-module. This said, to prove $(1) \imp (2)$ we just need to check that $\CU_{\phi'}\cap\Dc(R)$ is an aisle of $\Dc(R)$. But it follows from the fact that, for every $\Xdot\in \Dc(R)$, $\tleq{}{\phi'}\Xdot\cong \tleq{n-1}{}{} \tleq{}{\phi}\Xdot$, see Corollary~\ref{segundo-cor-lema}(2).

Finally let us show $(2) \imp (1)$. Let $\Xdot \in \Dc(R)$, assuming $(2)$ we have that both  $\Mdot = \tg{}{{\phi'}}\Xdot$ and $\tleq{ }{\phi_n}{}\tg{}{\phi'} \Xdot = \h^{n}(\R\Gamma_{\phi(n)}M )[-n]$ (see Proposition~\ref{Kashiwara=ours}(2)) belong to $\Dc(R)$ and we conclude by the triangle in Corollary \ref{segundo-cor-lema}$(1)$.


\end{proof}

\begin{prop}
\label{dos-pasos}
Let $\phi$ be a sp-filtration satisfying the  weak Cousin condition  and such that  $\lth(\phi_\ip)\leq 2$ for all prime ideal $\ip\in\spec(R)$ (equivalently for all maximal ideal $\ip\in\spec(R)$), then $\CU_{\phi}\cap\Dc(R)$ is an aisle of $\Dc(R)$.
\end{prop}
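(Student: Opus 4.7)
My plan is to verify the hypotheses of Lemma~\ref{induction-tool} inductively along a family of finite truncations of $\phi$. First I would reduce to the connected non-constant case: if $\spec(R)=U\sqcup V$ is a nontrivial open--closed decomposition, then $R$ and $\phi$ decompose accordingly and one handles each factor separately, while a constant $\phi$ yields a Bousfield localization that restricts to $\Dc(R)$ by the discussion in \S~\ref{idempotentes}. After shifting using Corollary~\ref{discreteness-of-filtration2}, we may assume $\phi(j)=\spec(R)$ for all $j\le 0$ and $\phi(1)\subsetneq\spec(R)$, with $\bigcap_j\phi(j)=\varnothing$.

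For each integer $n\ge 0$ let $\phi^{(n)}$ be the finite sp-filtration with $\phi^{(n)}(j)=\phi(j)$ for $j\le n$ and $\phi^{(n)}(j)=\varnothing$ for $j>n$. Each $\phi^{(n)}$ still satisfies the weak Cousin condition and has $\lth(\phi^{(n)}_\ip)\le 2$ locally (truncation never increases local length). I would prove by induction on $n$ that $\CU_{\phi^{(n)}}\cap\Dc(R)$ is an aisle of $\Dc(R)$, applying Lemma~\ref{induction-tool} at each stage with the pair $(\phi^{(n)},\phi^{(n-1)})$. The nontrivial requirement is condition~(3): for every $M\in\CU_{\phi^{(n-1)}}^\perp\cap\Dc(R)$, the complex $\tleq{n}{}(\R\Gamma_{\phi(n)}M)$ must lie in $\Dc(R)$.

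The decisive use of the local-length bound $\lth(\phi_\ip)\le 2$ enters here: localizing at any $\ip\in\phi(n)$ via Proposition~\ref{localizacion}, the local filtration has at most two strict drops, so the orthogonality condition $M\in\CU_{\phi^{(n-1)}}^\perp$ forces enough vanishing of $\R\Gamma_{\phi^{(n-1)}(j)}M_\ip$ in low degrees. A spectral sequence of the form $E_2^{p,q}=\h^p_{\phi(n)}(\h^q(M))\Rightarrow\h^{p+q}(\R\Gamma_{\phi(n)}M)$, combined with the composition identities of \S~\ref{util}, then shows that in degrees $\le n$ only submodules of the finitely generated modules $\Gamma_{\phi(n)}(\h^j(M))$ can contribute, the remaining terms being killed by the orthogonality. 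This yields $\tleq{n}{}(\R\Gamma_{\phi(n)}M)\in\Dc(R)$.

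To conclude for $\phi$ itself, I would observe that for any given $X\in\Dc(R)$ the truncation $\tleq{}{\phi}X$ is already computed by $\tleq{}{\phi^{(n)}}X$ as soon as $n$ exceeds the top homological degree of $X$ (using Lemma~\ref{5.3} to control the cohomological position of the truncation). The main obstacle is the spectral-sequence calculation in the inductive step: one must show that the hypothesis $\lth(\phi_\ip)\le 2$, together with the weak Cousin condition, really does kill all the potentially non-finitely-generated higher local-cohomology contributions to $\tleq{n}{}\R\Gamma_{\phi(n)}M$, which requires a careful local analysis at each prime $\ip\in\phi(n)$ exploiting the fact that at most two nontrivial strata of $\phi_\ip$ can intersect $\spec(R_\ip)$.
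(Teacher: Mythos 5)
Your approach differs genuinely from the paper's. The paper's proof begins with the assertion that ``the question is local'' and thereby reduces immediately to $R$ local; then the filtration has global length $\le 2$, so a single application of Lemma~\ref{induction-tool} with $\phi'$ equal to a standard $t$-structure finishes the argument. You instead keep $R$ global and run an induction over the truncations $\phi^{(n)}$, invoking Lemma~\ref{induction-tool} at each step and passing to localizations only inside the inductive step. Your route avoids the (unjustified in the paper) claim that membership of the $\phi$-triangle in $\Dc(R)$ can be checked prime by prime, at the cost of a longer argument. Worth noting: under your reduction ($\spec(R)$ connected, $\phi$ nonconstant, $\bigcap_j\phi(j)=\varnothing$) the hypothesis $\lth(\phi_\ip)\le 2$ together with weak Cousin actually forces $\phi$ to have \emph{finite} global length (there are finitely many minimal primes, and for a maximal $\ip$ containing the minimal prime $\iq_0$ one gets $n_\ip\le m_{\iq_0}+1$ where $m_{\iq_0}=\max\{j:\iq_0\in\phi(j)\}<\infty$), so your final ``stabilization'' observation, while correct, is not really needed.

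The substantive gap is in the inductive step, which you explicitly flag as the ``main obstacle'' and leave unverified. The decisive point is the following, which you hint at but do not state: for $\ip\in\phi(n)$, the constraints $\lth(\phi_\ip)\le 2$ and the weak Cousin condition force the determination interval $[s,m]$ of $\phi_\ip$ to satisfy $s\ge n-1$, hence $\phi_\ip(j)=\spec(R_\ip)$ for all $j\le n-1$. Combined with the orthogonality $M\in\CU_{\phi^{(n-1)}}^\perp$ (localized via Proposition~\ref{localizacion}), this gives $M_\ip=\R\Gamma_{\phi_\ip(n-1)}M_\ip\in\Dg{n-1}(R_\ip)$, i.e.\ $\h^q(M)_\ip=0$ for $q<n$ and every $\ip\in\phi(n)$. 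Therefore $R^p\Gamma_{\phi(n)}(\h^q(M))=0$ for $q<n$ (these modules are supported on $\phi(n)$ and vanish locally there), the spectral sequence collapses, and $\h^n(\R\Gamma_{\phi(n)}M)\cong\Gamma_{\phi(n)}(\h^n(M))$ is a submodule of the finitely generated module $\h^n(M)$. Without this computation the inductive step does not go through: the naive spectral-sequence terms $R^p\Gamma_{\phi(n)}(\h^q(M))$ with $p>0$ are not a priori finitely generated, and the orthogonality alone ($\R\Gamma_{\phi(j)}M\in\D^{>j}(R)$ for $j<n$) does not kill them without the local-length input. Your plan is therefore sound but, as written, incomplete at precisely the step that carries the weight of the hypotheses.
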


\begin{proof}
The question is local so we can assume that $R$ is local (hence $\spec(R)$ is connected and of finite Krull dimension). Then  $\phi$ is a finite  sp-filtration of length $\leq 2$ which, without loss of generality, we assume $\phi$ nonconstant and concentrated in the interval $[0,n]$. If $\lth(\phi)=1$, that is $n=0$, then $\CU_{\phi}\cap\Dc(R)$ is trivially an aisle since in the present setting the condition $\lth(\phi)=1$ is equivalent to saying that $\CU_{\phi}=\Dleq{0}(R)$. If $\lth(\phi)=2$, that is $n=1$, then $\CU_{{\phi'}}\cap\Dc(R)=\Dcleq{0}(R)$ is the aisle of the canonical $t$-structure on $\Dc(R)$. On the other
hand, if $M\in{\CU_{\phi'}}^\perp\cap\Dc(R) =\Dg{0}(R)\cap\Dc(R)$, we have that $\h^1(\R\Gamma_{\phi(1)}M
)\cong\Gamma_{\phi (1)}(\h^1(M ))$, which is finitely
generated because it is a submodule of the finitely generated
module $\h^1(M )$.  Hence $\CU_{{\phi}}\cap\Dc(R)$ is an aisle of $\Dc(R)$ by Lemma~\ref{induction-tool}.
\end{proof}


\section{The classification over rings with dualizing complex}

We begin this section by recalling briefly some basic results on dualizing complexes from \cite[Chapter V \S 2]{RD} in our context. 

\begin{cosa}
A complex $X\in \D(R)$ is \emph{reflexive with respect to} $\DDD\in \D(R)$ if the natural
morphism
  \[
   \sigma_{X}\colon X\lto
   \rhomdot_R(\rhomdot_R(X ,\DDD   ),\DDD   )
  \]
is an isomorphism in $\D(R)$.

Let $\DDD \in \Dbc(R)$ be a complex quasi-isomorphic to a bounded complex of injective $R$-modules. Then following assertions are equivalent \cite[Chapter V \S 2 Proposition~2.1]{RD}:
  \begin{enumerate}
     \item  The contravariant functor
            $\rhomdot_R(-,\DDD)\colon \Dc(R)\to \Dc(R)$ is
            a triangulated duality quasi-inverse of itself.
     \item  The contravariant functor
            $\rhomdot_R(-,\DDD)\colon \Dbc(R)\to \Dbc(R)$ is
            a triangulated duality quasi-inverse of itself.
     \item  Every finitely generated $R$-module is reflexive with respect to $\DDD$.
     \item  The stalk complex
            $R[0]$ is reflexive with respect to $\DDD$.
   \end{enumerate}
   
   A complex $\DDD \in \Dbc(R)$ quasi-isomorphic to a bounded complex of injective $R$-modules that satisfies the above equivalent conditions is called a \emph{dualizing complex} for $R$. More generally, $\DDD \in \Dcmas(R)$ is called  a \emph{pointwise dualizing complex} for $R$ in case $\DDD_\ip$ is a dualizing complex over $R_\ip$, for every $\ip\in \spec(R)$.
   
   If $R$ possesses a dualizing complex then $R$ has finite Krull dimension ({\cfr} \cite[Chapter~V, Corollary~7.2, p.~283]{RD}). Furthermore, $\DDD  $ is a dualizing complex for $R$ if and only if $\DDD  $ is a pointwise dualizing complex and the Krull dimension of $R$ is finite.
\end{cosa}

\begin{cosa}
\label{interpreting-codimension-function}
Let $\DDD  \in \Dbc(R)$ be a complex. As we easily derive from \cite[Chapter~V, Proposition~3.4, p.~269]{RD},  $\DDD  $ is a pointwise dualizing complex if, and only if, for each $\ip\in \spec(R)$ there is a unique $i_\ip\in\ZZ$ such that
  \[
   \Hom_{\D(R_\ip)}(k(\ip),\DDD  _\ip[j])=
   {\begin{cases}
   0,      & \text{if $j\neq i_\ip$},\\
   k(\ip), & \text{if $j=i_\ip$}.
   \end{cases}}
  \]
In that case we define a map $\ddd\colon \spec(R)\to\ZZ$ by setting $\ddd(\ip)=i_\ip$, for all $\ip\in\spec(R)$. Observe that the map $\ddd\colon \spec(R)\to\ZZ$ obeys the rule:
  \begin{center}
     $\ddd(\ip)=i\,\,\Longleftrightarrow\,[\,
     \Hom_{\D(R_\ip)}(k(\ip),\DDD  _\ip[j])=0$, $\forall\,j\in\ZZ$ such that $j\neq i$\,]
  \end{center}
Moreover $\ddd\colon \spec(R)\lto\ZZ$ is a codimension function, that is, if $\ip\subsetneq\iq$ and $\hhtt(\iq/\ip)=1$ then $\ddd(\iq)=\ddd(\ip)+1$ \cite[Chapter~V, \S~7 Proposition~7.1]{RD}.

The following Lemma gives a useful characterization of $\ddd\colon  \spec(R)\to\ZZ$.
\end{cosa}

\begin{lem}
\label{lema}
If $\DDD  \in\D(R)$ is a dualizing complex, then
  \[
   \ddd(\ip)=\max\{n\in\ZZ\,\,;\,\, \R\Gamma_{\V(\ip)} \DDD \in\Dgeq{n}(R)\}
  \]
for each $\ip\in\spec(R)$.
\end{lem}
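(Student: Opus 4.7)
The plan is to pin down both $\min\{j:\h^j(\R\Gamma_{\V(\ip)}\DDD)\neq 0\}$ from above and from below, matching them at $\ddd(\ip)$. The upper bound comes from localising and invoking local duality; the lower bound comes from the classical injective resolution of $\DDD$ as a residual complex.

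First I would localise at $\ip$. Since $\R\Gamma$ commutes with flat base change (see \S\ref{util} and \S\ref{notacionlocalizacion}), and since $\V(\ip)\cap\spec(R_\ip)=\{\ip R_\ip\}$, one has
\[
(\R\Gamma_{\V(\ip)}\DDD)_\ip\cong \R\Gamma_{\{\ip R_\ip\}}(\DDD_\ip).
\]
The complex $\DDD_\ip$ is a dualizing complex over the local ring $R_\ip$; by the characterization recalled in \ref{interpreting-codimension-function}, $\DDD_\ip$ is normalised so that $\Hom_{\D(R_\ip)}(k(\ip),\DDD_\ip[j])$ vanishes except in degree $\ddd(\ip)$, where it equals $k(\ip)$. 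Grothendieck's local duality then gives $\R\Gamma_{\ip R_\ip}(\DDD_\ip)\cong E_{R_\ip}(k(\ip))[-\ddd(\ip)]$, a complex concentrated in degree $\ddd(\ip)$ with nonzero cohomology there. In particular $\h^{\ddd(\ip)}(\R\Gamma_{\V(\ip)}\DDD)_\ip\neq 0$, and therefore $\h^{\ddd(\ip)}(\R\Gamma_{\V(\ip)}\DDD)\neq 0$, i.e. the minimum index of nonvanishing cohomology is at most $\ddd(\ip)$.

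For the reverse inequality I would use the classical fact that any dualizing complex is quasi-isomorphic to its residual complex: a bounded complex $\Edot$ of injective modules with $\Edot^{\,n}=\bigoplus_{\ddd(\iq)=n}E(R/\iq)$ (Hartshorne, \emph{Residues and Duality}, Chap.~V~\S7). Since $\Gamma_{\V(\ip)}E(R/\iq)=E(R/\iq)$ when $\iq\supset\ip$ and vanishes otherwise, applying $\Gamma_{\V(\ip)}$ term by term yields
\[
\Gamma_{\V(\ip)}\Edot^{\,n}=\bigoplus_{\iq\supset\ip,\;\ddd(\iq)=n}E(R/\iq).
\]
For $n<\ddd(\ip)$ this sum is empty: along any saturated chain $\ip=\ip_0\subsetneq\ip_1\subsetneq\cdots\subsetneq\ip_r=\iq$ the defining property of a codimension function forces $\ddd(\iq)=\ddd(\ip)+r\geq\ddd(\ip)$. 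Hence $\Gamma_{\V(\ip)}\Edot^{\,n}=0$ for $n<\ddd(\ip)$, whence $\R\Gamma_{\V(\ip)}\DDD\cong\Gamma_{\V(\ip)}\Edot\in\Dgeq{\ddd(\ip)}(R)$.

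Putting both steps together, the smallest index of nonzero cohomology of $\R\Gamma_{\V(\ip)}\DDD$ is exactly $\ddd(\ip)$, which is equivalent to $\ddd(\ip)=\max\{n:\R\Gamma_{\V(\ip)}\DDD\in\Dgeq{n}(R)\}$. The main potential obstacle is just the appeal to the two deep inputs: local duality and the existence of the residual complex structure; both, however, are standard citations from Hartshorne's \emph{Residues and Duality}. An alternative route avoiding the residual complex would use induction on the Krull dimension of $R/\ip$ via the distinguished triangle coming from $\Gamma_{\V(\ip)}\to\Gamma_{\V(\iq)}\to\Gamma_{\V(\ip)/\V(\iq)}$ for $\iq$ an immediate specialisation of $\ip$, but the residual-complex argument is much cleaner.
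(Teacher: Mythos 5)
Your proof is correct, but it uses a genuinely different argument from the paper for the lower bound $\R\Gamma_{\V(\ip)}\DDD\in\Dgeq{\ddd(\ip)}(R)$. Both proofs establish the upper bound by localising at $\ip$ and using the structure of the dualizing complex over the local ring $R_\ip$ to conclude that $\h^{\ddd(\ip)}\bigl(\R\Gamma_{\V(\ip)}\DDD\bigr)_\ip\neq 0$; the paper stops just short of naming the local-duality isomorphism $\R\Gamma_{\ip R_\ip}\DDD_\ip\cong E(k(\ip))[-\ddd(\ip)]$, but the computation is the same. Where you diverge is the reverse inequality. You pass to the residual complex $\Edot$ with $\Edot^{\,n}=\bigoplus_{\ddd(\iq)=n}E(R/\iq)$, apply $\Gamma_{\V(\ip)}$ termwise (legitimate since a bounded complex of injectives is K-injective), and observe that the term in degree $n$ involves only primes $\iq\supseteq\ip$, for which the codimension-function identity $\ddd(\iq)=\ddd(\ip)+\operatorname{ht}(\iq/\ip)$ forces $\ddd(\iq)\geq\ddd(\ip)$; hence $\Gamma_{\V(\ip)}\Edot^{\,n}=0$ for $n<\ddd(\ip)$. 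The paper instead argues by Noetherian induction: assuming a maximal counterexample $\ip$, it splits $\R\Gamma_{\V(\ip)}\DDD$ via the Bousfield triangle for $W_\ip=\spec(R)\setminus\spec(R_\ip)$ into a piece $\R\Gamma_{W'_\ip}\DDD$ governed by the strictly larger primes (where the bound holds by maximality) and the localisation $(\R\Gamma_{\V(\ip)}\DDD)_\ip$ (already controlled), deriving a contradiction. Your route is shorter and more explicit, at the cost of importing the existence and structure of the residual complex from Hartshorne VI; the paper's route is more self-contained within the $\R\Gamma$/Bousfield machinery it has already set up, and it never needs the termwise description of $\DDD$. Both are valid, and both bottlenecks (local duality on one side, residual complexes on the other) ultimately rest on the same RD theory. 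One small phrasing remark: saying $\DDD_\ip$ ``is normalised'' is slightly loose---$\DDD_\ip$ is not adjusted by you, it simply has the property that $\rhomdot_{R_\ip}(k(\ip),\DDD_\ip)$ sits in degree $\ddd(\ip)$, which is precisely the definition of $\ddd$.
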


\begin{proof}
First note that for every $\ip\in\spec(R)$ and $j\in\ZZ$ the support of the $R$-module $\Hom_{\D(R)}(R/\ip,\DDD[j])$ is contained in $\V(\ip)$. Moreover
  \[
   \Hom_{\D(R)}(R/\ip,\DDD[j])_\ip\cong \Hom_{\D(R_\ip)}(k(\ip),\DDD_\ip[j])
  \]
because $\DDD  $ is bounded below.  
The result mentioned in \ref{interpreting-codimension-function} guaranties that $0\neq (\rhomdot_R(R/\ip,\DDD  ))_\ip \in \Dcacb{\ddd(\ip)}{\ddd(\ip)}(R_\ip)$. As a consequence $(\R\Gamma_{\V(\ip)} \DDD)_\ip\cong \R\Gamma_{\ip R_\ip}\DDD_\ip$ belongs to $\Dgeq{\ddd(\ip)}(R_\ip)$ 
and does not belong to $\Dg{\ddd(\ip)}(R_\ip)$.
Whence $\h^{\ddd(\ip)}(\R\Gamma_{\V(\ip)} \DDD)\neq 0$ and therefore
  \[
   \max\{n\in\ZZ\,\,;\,\, \R\Gamma_{\V(\ip)} \DDD \in\Dgeq{n}(R)\}\leq \ddd(\ip),\quad \text{ for all } \ip\in\spec(R).
  \]
Let $T$ be the set of prime ideals in $\spec(R)$ for which the desired equality does not hold, {\ie}
  $
  T=\{\ip\in\spec(R)\,\,;\,\,\R \Gamma_{\V(\ip)} \DDD \notin\Dgeq{\ddd(\ip)}(R)\}.
  $
Assume that $T$ is non empty and choose a prime ideal $\ip\in T$ maximal among the prime ideals in $T$ (recall that $R$ is Noetherian). Then for all $\iq\in\spec(R)$ such that $\ip\subsetneq\iq$ it holds that
  \[
   \ddd(\iq)=\max\{n\in\ZZ\,\,;\,\,\R\Gamma_{\V(\iq)} \DDD \in\Dgeq{n}(R)\}
  \]
Let $W_\ip := \{\iq \,\,;\,\,\iq\in\spec(R)\text{ and }\iq \nsubseteq \ip \} =\spec(R)\setminus \spec(R_\ip)$. Let us consider the canonical Bousfield triangle determined by $W_\ip$ for $\R\Gamma_{\V(\ip)} \DDD$
  \[
   \R\Gamma_{W_\ip}\R\Gamma_{\V(\ip)} \DDD   {\lto}\,\,
   \R\Gamma_{\V(\ip)} \DDD \overset{u}{\lto} 
   (\R\Gamma_{\V(\ip)} \DDD)_\ip \overset{+}{\lto}.
  \]
By the remark in the previous paragraph $(\R\Gamma_{\V(\ip)} \DDD)_\ip$ is in $\Dgeq{\ddd(\ip)}(R)$ and does not belong to $\Dg{\ddd(\ip)}(R)$. Let us prove that the left vertex in the above triangle is in $\Dgeq{\ddd(\ip)+1}(R)$. Let ${W'_\ip}$ be the set of prime ideals ${W'_\ip}:=W_\ip\cap\V(\ip)=\V(\ip)\setminus \{\ip\}$. Then using the canonical isomorphism $ \R\Gamma_{W_\ip}\R\Gamma_{\V(\ip)}\cong\R\Gamma_{W'_\ip}$ ({\cfr} \S~\ref{util}) we deduce that $\R\Gamma_{W_\ip}\R\Gamma_{\V(\ip)} \DDD \cong \R\Gamma_{W'_\ip} \DDD  \in\Dgeq{\ddd(\ip)+1}(R)$, because
  \[
   \R\Gamma_{\V(\iq)} \DDD \in\Dgeq{\ddd(\iq)}(R)\subset \Dgeq{\ddd(\ip)+1}(R),
  \]
for all $\iq\in W'_\ip$ (see Corollary~\ref{corimportante2}). From the above Bousfield triangle we conclude that $\R\Gamma_{\V(\ip)} \DDD \in\Dgeq{\ddd(\ip)}(R)$ against the fact that $\ip\in T$.
\end{proof}

\begin{cosa}
\label{notacionutil}
Let $\DDD  \in\Dbc(R)$ be a dualizing complex for $R$ and $\ddd\colon \spec(R)\to\ZZ$ its associated codimension function.

The duality functor $\rhomdot_R(-,\DDD)\colon \Dbc(R)\lto\Dbc(R)$ transforms the canonical $t$-structure on $\Dbc(R)$ onto a $t$-structure on $\Dbc(R)$. We call this $t$-structure the \emph{Cohen-Macaulay $t$-structure} on $\Dbc(R)$ with respect to $\DDD$, because it can be proved that the objects in its heart are precisely the Cohen-Macaulay complexes in the sense of \cite[pp. 238-239]{RD}.

By Corollary~\ref{islasinducidas} there exists a unique sp-filtration of $\spec(R)$ associated to the Cohen-Macaulay $t$-structure on $\Dbc(R)$ (with respect to $\DDD$). We denote this filtration by
\[
\fcm \colon \ZZ\lto\CP(\spec(R))
\]
and we name it the \emph{Cohen-Macaulay filtration} (with respect to $\DDD$).

Trivially the filtration $\fcm$ satisfies the  weak Cousin condition, actually as a consequence of Proposition~\ref{dual-of-canonical-t-structure} right below, the filtration $\fcm$ does satisfy the strong Cousin condition ({\cfr} the remark after Theorem~\ref{5.5}) because $\ddd$ is a codimension function. 
\end{cosa}

\begin{prop}
\label{dual-of-canonical-t-structure}
Let us consider the hypothesis and notation in the above paragraph. The Cohen-Macaulay filtration
$
\fcm
$
attaches to each $i\in \ZZ$ the set
  \[
   \fcm (i)=\{\ip\in \spec(R)\,\,;\,\,\ddd (\ip)>i\}.
  \]
%
%
\end{prop}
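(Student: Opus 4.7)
The plan is to unwind the definition $\fcm(i)=\{\ip\in\spec(R):R/\ip[-i]\in \CU_{CM}\}$ provided by Corollary~\ref{islasinducidas}, and to translate the condition $R/\ip[-i]\in\CU_{CM}$ through the contravariant duality $D:=\rhomdot_R(-,\DDD)$. Since the Cohen-Macaulay $t$-structure is obtained by transforming the canonical one via $D$, a direct check (contravariance of $D$ interchanges the roles of aisle and co-aisle) shows that its aisle is
\[
\CU_{CM}=\{X\in\Dbc(R):D(X)\in\Dcgeq{1}(R)\}=D(\Dcgeq{1}(R)).
\]
Because $D(R/\ip[-i])=D(R/\ip)[i]$, contravariance gives
\[
R/\ip[-i]\in\CU_{CM}\iff D(R/\ip)\in\Dcgeq{1+i}(R),
\]
so the whole proposition reduces to the assertion that $D(R/\ip)\in\Dgeq{n}(R)$ if and only if $n\leq\ddd(\ip)$.

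For the bound $D(R/\ip)\in\Dgeq{\ddd(\ip)}(R)$, the crucial step will be the natural isomorphism
\[
\rhomdot_R(R/\ip,\DDD)\;\cong\;\rhomdot_R(R/\ip,\R\Gamma_{\V(\ip)}\DDD).
\]
I obtain this by applying $\rhomdot_R(R/\ip,-)$ to the Bousfield triangle $\R\Gamma_{\V(\ip)}\DDD\to\DDD\to\R Q_{\V(\ip)}\DDD\stackrel{+}{\to}$ of \S\ref{util0} and showing that the third term is killed: the stalk complex $R/\ip$ satisfies $\supp\h^0(R/\ip)=\V(\ip)$ and therefore, by Theorem~\ref{suph}, lies in the aisle of the smashing Bousfield localization attached to $\V(\ip)$, whereas $\R Q_{\V(\ip)}\DDD$ lies in its co-aisle by construction. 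Because this is a triangulated $t$-structure, both classes are closed under all shifts and so are mutually orthogonal at the level of $\rhomdot_R(-,-)$, forcing $\rhomdot_R(R/\ip,\R Q_{\V(\ip)}\DDD)=0$. Lemma~\ref{lema} then yields $\R\Gamma_{\V(\ip)}\DDD\in\Dgeq{\ddd(\ip)}(R)$, and since $R/\ip$ is a stalk complex in degree $0$, the functor $\rhomdot_R(R/\ip,-)$ preserves this lower bound (compute with a K-injective resolution of $\R\Gamma_{\V(\ip)}\DDD$ concentrated in degrees $\geq\ddd(\ip)$), giving the required estimate.

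For sharpness, $D(R/\ip)\notin\Dgeq{\ddd(\ip)+1}(R)$, I localize at $\ip$: under the canonical isomorphism $\rhomdot_R(R/\ip,\DDD)_\ip\cong\rhomdot_{R_\ip}(k(\ip),\DDD_\ip)$, the defining property of $\ddd$ recalled in \S\ref{interpreting-codimension-function} identifies the right-hand side with a single copy of $k(\ip)$ placed in cohomological degree $\ddd(\ip)$. Therefore $\h^{\ddd(\ip)}(D(R/\ip))_\ip\neq 0$ and so $\h^{\ddd(\ip)}(D(R/\ip))\neq 0$. Combining the two bounds gives exactly $D(R/\ip)\in\Dgeq{n}(R)\iff n\leq\ddd(\ip)$, which upon substituting $n=1+i$ becomes $\fcm(i)=\{\ip:\ddd(\ip)>i\}$, as desired. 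The main obstacle will be the vanishing $\rhomdot_R(R/\ip,\R Q_{\V(\ip)}\DDD)=0$ obtained through the triangulated orthogonality of the Bousfield aisle and co-aisle; everything else then follows formally from Lemma~\ref{lema}, Theorem~\ref{suph}, the contravariance of $D$, and the pointwise description of $\ddd$.
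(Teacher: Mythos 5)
Your proposal is correct and converges on the same core strategy as the paper: unwind the definition of $\fcm$, reduce the membership condition to a statement about $\R\Gamma_{\V(\ip)}\DDD$, and invoke Lemma~\ref{lema} for the codimension function. The only real variation is in the middle step. The paper's proof rewrites $\fcm(i)=\{\ip : \Hom_{\D(R)}(R/\ip[j],\DDD)=0,\ \forall j\geq -i\}$ directly as $\{\ip : \R\Gamma_{\V(\ip)}\DDD\in\Dg{i}(R)\}$ by citing Proposition~\ref{truncacionparaunideal} (identifying $\aisle{R/\ip[-i]}$ with $\cu{i}{\V(\ip)}$) and Corollary~\ref{ejemplo} (describing $\cu{i}{\V(\ip)}^{\perp}$), after which a single application of Lemma~\ref{lema} finishes. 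You instead re-derive the relevant orthogonality by hand: you establish $\rhomdot_R(R/\ip,\DDD)\cong\rhomdot_R(R/\ip,\R\Gamma_{\V(\ip)}\DDD)$ from the Bousfield triangle and the (correct) observation that $R/\ip$ lies in the triangulated aisle of the $\V(\ip)$-localization while $\R Q_{\V(\ip)}\DDD$ is local, then bound $\rhomdot_R(R/\ip,\R\Gamma_{\V(\ip)}\DDD)$ from below using Lemma~\ref{lema} plus the remark that $\rhomdot_R(R/\ip,-)$ preserves $\Dgeq{n}(R)$, and finally obtain sharpness by localizing at $\ip$ and appealing to the defining property of $\ddd$ from \S\ref{interpreting-codimension-function}. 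Both routes are valid; the paper's is slightly shorter because it lets the already-proved Corollary~\ref{ejemplo}/Proposition~\ref{truncacionparaunideal} absorb the Bousfield orthogonality computation, whereas your version makes the same orthogonality explicit and separately checks the two inequalities $D(R/\ip)\in\Dgeq{\ddd(\ip)}$ and $\h^{\ddd(\ip)}(D(R/\ip))\neq 0$, where the paper reads both off from the single statement of Lemma~\ref{lema}.
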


\begin{proof}
Let $(\CV,\CE[1])$ be the $t$-structure on $\Dbc(R)$ image by the duality functor $\rhomdot_{\D(R)}(-,\DDD)$ of the canonical $t$-structure on $\Dbc(R)$. The class $\CV$ consists of those complexes $\Xdot\in\Dbc(R)$ such that
  \[
   0=\Hom_{\D(R)}(\Xdot, \rhomdot_R(\Ndot,\DDD))
  \]
for all $\Ndot\in \Dleq{0}{}(R) \cap \Dbc(R)$. The canonical aisle $\D^{\leq 0}_{\tf}(R)$ is generated by the stalk complex $R$, therefore a complex $\Xdot\in\Dbc(R)$ is in $\CV$ if and only if
  \begin{equation}
  \label{dualdelacanonica}
   0=\Hom_{\D(R)}(\Xdot, \rhomdot_R(R[i],\DDD))\cong
   \Hom_{\D(R)}(\Xdot[i], \DDD)
  \end{equation}
for all $i \geq 0$.
Then the filtration $\fcm$ is defined, for each $i\in\ZZ$, by
  \[
   \fcm (i)=\{\ip\in \spec(R)\,;\,\,
   \Hom_{\D(R)}({R}/{\ip}[j],\DDD  )=0,\text{ for all }j\geq -i\},
  \]
a formula that can be rewritten as 
  \[
   \fcm (i)=\{\ip\in \spec(R)\,\,;\,\,
   \R\Gamma_{\V(\ip)} \DDD \in\Dg{i}{}(R)\}.
  \]
(see Corollary~\ref{ejemplo} and Proposition~\ref{truncacionparaunideal}). Then we get from Lemma~\ref{lema} that $\fcm(i)=\{\ip\in \spec(R)\,\,;\,\,\ddd (\ip)>i\}$, for all $i\in\ZZ$.\qedhere
\end{proof}

\begin{cosa}
\label{islasduales}
Given  a total pre-aisle $\CV$ of $\Dbc(R)$ and $\CE$ its right orthogonal in $\Dbc(R)$ there is a unique sp-filtration $\phi \in \Filsup(R)$ such that $\CV=\CU_\phi\cap \Dbc(R)$ and $\CE=\CF_\phi\cap\Dbc(R)$, where $\CF_\phi$ is the right orthogonal of $\CU_\phi$ in $\D(R)$ (see Corollary~\ref{islasinducidas}). Assume that $R$ admits a dualizing complex $\DDD$ with codimension function $\ddd\colon \spec(R)\to \ZZ$. Then the image by the duality functor $\rhomdot_R(- ,\DDD)$ of the class $\CE$ is a total pre-aisle of $\Dbc(R)$ that we denote by   $\CE^\ddd$. The right orthogonal of $\CE^\ddd$ in $\Dbc(R)$ is the image of $\CV$ by the duality functor $\rhomdot_R(-, \DDD)$, that we denote by $\CV^\ddd$. Therefore there exist a unique sp-filtration $\phi^\ddd \in \Filsup(R)$, that we call the \emph{dual of} $\phi$ (with respect to $\DDD$), such that $\CE^\ddd=\CU_{\phi^\ddd}\cap \Dbc(R)$ and $\CV^\ddd=\CF_{\phi^\ddd}\cap\Dbc(R)$.

Recall that $\Xdot\in \CE=\CF_\phi\cap \Dbc(R)$ if and only if $\Hom_{\D(R)}({R}/{\ip}[-j],\Xdot)=0$, for all $j\in \ZZ$ and $\ip\in \phi(j)$. Then it follows from duality that $\Xdot\in \CE$ if and only if
  \[
   \Hom_{\D(R)}(\rhomdot_R(X ,\DDD),
   \rhomdot_R({R}/{\ip}[-j],\DDD))=0,
  \]
for any $j\in\ZZ$ and $\ip\in \phi(j)$. That is, $\CE^\ddd=\CU_{\phi^\ddd}\cap \Dbc(R)$ is the left orthogonal in $\Dbc(R)$ to the set of objects
$
\CY=\{\rhomdot_R({R}/{\ip}[-j],\DDD)\,;\,\, j\in\ZZ,\,\, \ip\in \phi(j) \}
$.
Therefore the dual of $\phi$ is the sp-filtration defined by
\[
\phi^\ddd(k)=\{\,\iq\in\spec(R)\,;\,\, \Hom_{\D(R)}(R/\iq[-k],\Ydot)=0
 \text{ for all }\Ydot \in\CY\}
 \]
for each $k\in\ZZ$.
\end{cosa}

\begin{lem}
\label{Kashiwara1-generalized}
Let $R$ be a ring that admits a dualizing complex $\DDD$ with  $\fcm\colon \ZZ\to\CP(\spec(R))$ as its associated Cohen-Macaulay filtration, and let $Z\subset \spec(R)$ be a sp-subset. For a complex $X\in \Dbc(R)$ and $n \in \ZZ$, the following assertions are equivalent:
  \begin{enumerate}
     \item $\R\Gamma_Z X$ belongs to $\Dg{n}(R)$;
     \item for each $k\in\ZZ$ and all
           $\iq\in \supp(\Hom_{\D(R)}(X ,\DDD[k]))$, one has that
             \[
              \supp(\tor_i^R({R}/{\iq},{R}/{\ip}))
              \subset\fcm (k+n-i)
              \]
           for all $i\geq 0$ and all $\ip\in Z$;
     \item $Z\cap \supp(\Hom_{\D(R)}(X ,\DDD[k])) \subset\fcm (k+n)$, for all $k\in\ZZ$.
  \end{enumerate}
\end{lem}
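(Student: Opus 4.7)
The proof will proceed through Grothendieck duality. Set $Y:=\rhomdot_R(X,\DDD)\in\Dbc(R)$; by reflexivity $X\cong\rhomdot_R(Y,\DDD)$ and $\h^k(Y)=\Hom_{\D(R)}(X,\DDD[k])$, so conditions (2) and (3) concern the supports of the modules $\h^k(Y)$. By Corollary~\ref{ejemplo} and Proposition~\ref{truncacionparaunideal}, condition (1) is equivalent to $X\in(\cu{n}{Z})^{\perp}$, i.e.\ to $\ext^j_R(R/\ip,X)=0$ for every $\ip\in Z$ and $j\leq n$; by tensor--hom adjunction the latter reads $\h^j\rhomdot_R(R/\ip\otimes^{\LL}_R Y,\DDD)=0$ for all $\ip\in Z$ and $j\leq n$.

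The equivalence (2)$\Leftrightarrow$(3) is formal. (2)$\Rightarrow$(3) is the case $\iq\in Z\cap\supp\h^k(Y)$, $\ip=\iq$, $i=0$, since $\tor_0^R(R/\iq,R/\iq)=R/\iq$ has support $\V(\iq)\ni\iq$. Conversely, any $\im\in\supp\tor_i^R(R/\iq,R/\ip)$ with $\ip\in Z$ and $\iq\in\supp\h^k(Y)$ satisfies $\im\supset\iq+\ip$, whence $\im\in Z$ and $\im\in\supp\h^k(Y)$ (both are closed under specialization), and (3) then gives $\ddd(\im)>k+n\geq k+n-i$, i.e.\ $\im\in\fcm(k+n-i)$.

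The substantive equivalence is (1)$\Leftrightarrow$(3), for which I plan to combine the convergent hyper-ext spectral sequence
\[
E_2^{p,q}=\ext^p_R(\h^{-q}(R/\ip\otimes^{\LL}_R Y),\DDD)\Longrightarrow\ext^{p+q}_R(R/\ip\otimes^{\LL}_R Y,\DDD),
\]
refined by the hypertor spectral sequence that expresses $\h^{-q}(R/\ip\otimes^{\LL}_R Y)$ in terms of $\tor^R_i(R/\ip,\h^s(Y))$, with the following vanishing principle derived from Lemma~\ref{lema} by Noetherian induction on $\supp M$ through a prime filtration: \emph{if $M$ is a finitely generated $R$-module with $\supp M\subset\fcm(p)$, then $\ext^p_R(M,\DDD)=0$}. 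Indeed, $\ext^p_R(R/\im,\DDD)=\Hom_{\D(R)}(R/\im,\R\Gamma_{\V(\im)}\DDD[p])$ vanishes for $p<\ddd(\im)$ by Lemma~\ref{lema}, and this extends to arbitrary finitely generated $M$ via the long exact sequences for a prime filtration. Granting both tools, (3)$\Rightarrow$(1) is a term-by-term $E_2$-vanishing check in the range $p+q\leq n$. For (1)$\Rightarrow$(3) I would argue contrapositively: given $\iq\in Z\cap\supp\h^k(Y)$ with $\ddd(\iq)\leq k+n$, after localizing at $\iq$ I would use local duality on $R_\iq$ (together with Nakayama applied to $\h^{k}(Y)_\iq$ as in Lemma~\ref{top-index}) to produce a surviving nonzero class in some $\ext^j_R(R/\iq,X)$ with $j\leq n$, contradicting (1). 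The main obstacle is ensuring this top class is not cancelled by differentials of the spectral sequence, which I expect will require a Noetherian induction on $\supp Y$ in the spirit of Lemma~\ref{all-are-geometric1}.
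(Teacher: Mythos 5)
Your approach is genuinely different from the paper's. The paper deduces the whole equivalence from the duality of $t$-structures: writing $\phi$ for the sp-filtration with $\CU_\phi=\cu{n}{Z}$, condition~(1) is equivalent to $X\in\CF_\phi\cap\Dbc(R)$, which by the mechanism of \S\ref{islasduales} and the explicit description of $\CU_{\phi^\ddd}$ from Theorem~\ref{classification-c-g} is equivalent to $\supp(\h^k(Y))\subset\phi^\ddd(k)$ for all $k$; the rest of the paper's proof is devoted to unwinding $\phi^\ddd(k)$ into the Tor conditions of~(2) and then showing $\phi^\ddd=\xi$ to get~(3). You instead reduce~(1) to $\ext^j(R/\ip\otimes^{\LL}_R Y,\DDD)=0$ for $\ip\in Z$, $j\leq n$, and attack this directly through the hyper-ext/hypertor spectral sequences and a vanishing principle for $\ext^p(M,\DDD)$. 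Your proof of (2)$\Leftrightarrow$(3) via specialization-closedness of $Z$ and of $\supp\h^k(Y)$ is slicker than the paper's $\xi=\phi^\ddd$ computation, and your (3)$\Rightarrow$(1) reduces, using that $\supp\h^{-q}(R/\ip\otimes^{\LL}Y)\subset\fcm(n-q)$, to the vanishing lemma exactly as you say.

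The one place you declare yourself stuck --- making sure the nonzero class survives all differentials in the proof of (1)$\Rightarrow$(3) --- is in fact not an obstacle, and no Noetherian induction is needed. After localizing at $\iq$, replace $k$ by $k_0=\max\{s\,:\,(\h^s Y)_\iq\neq 0\}\geq k$; Nakayama gives $\h^{k_0}(k(\iq)\otimes^{\LL}_{R_\iq}Y_\iq)=k(\iq)\otimes_{R_\iq}\h^{k_0}(Y_\iq)\neq 0$, a nonzero finite-dimensional $k(\iq)$-vector space. Every $\h^{-q}(k(\iq)\otimes^{\LL}_{R_\iq}Y_\iq)$ is supported at the closed point $\{\iq R_\iq\}$, so your vanishing lemma (over $R_\iq$) kills $E_2^{p,q}$ for all $p<\ddd(\iq)$, and $\h^{-q}=0$ for $-q>k_0$. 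Hence the entire $E_2$ page in positions $(p,q)$ with $p<\ddd(\iq)$ or $-q>k_0$ vanishes, so all differentials into and out of $(\ddd(\iq),-k_0)$ are zero and $E_\infty^{\ddd(\iq),-k_0}=E_2^{\ddd(\iq),-k_0}\neq 0$. Thus $\ext^{\ddd(\iq)-k_0}\bigl((R/\iq\otimes^{\LL}Y)_\iq,\DDD_\iq\bigr)\neq 0$, and since Ext commutes with localization for these finite objects, (1) forces $\ddd(\iq)-k_0>n$, i.e.\ $\ddd(\iq)>k_0+n\geq k+n$, which is exactly $\iq\in\fcm(k+n)$. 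With this filled in, your spectral-sequence route is a complete, more computational alternative to the paper's duality-of-aisles argument; the trade-off is that you must verify conditional convergence (which holds because $\DDD$ has finite injective dimension and $R/\ip\otimes^{\LL}Y$ is bounded above, so each total degree meets only finitely many $E_2^{p,q}$), whereas the paper's proof delegates all the homological bookkeeping to the already-established structure theory of $\CU_{\phi^\ddd}$.
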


\begin{proof}
Let $\phi$ be the sp-filtration determined by the aisle $\CU_Z^n\subset \D(R)$.
Recall from the above paragraph that
  \[
   \phi^\ddd(k)=\big\{\iq\in \spec(R)\,\,\big/
   \,\,{R}/{\iq}[-k]\in{ }^\perp\mathcal{Y}\big\},
  \]
where now
$
   \CY :=\{\rhomdot_R({R}/{\ip}[-j],\DDD)\,\,\big/\,\, \ip\in Z,\, j\leq n \}.
$

Note that $\R\Gamma_Z X\in \Dg{n}(R)$ is equivalent to
\begin{enumerate}
\item[$(1')$]
 \(
   \supp(\h^k(\rhomdot_R(X,\DDD)))=\supp(\Hom_{\D(R)}(X,\DDD  [k]))
   \subset \phi^\ddd(k)
  \)
\end{enumerate}
for all $k\in\ZZ$. In order to prove the equivalence between $(1)$ and $(2)$ we will give an alternative description of the dual sp-filtration $\phi^\ddd$.

Let us fix $k\in\ZZ$ an arbitrary integer. Notice that $\iq\in\phi^\ddd(k)$ if and only if
  \begin{align}
  \label{primeradef}
      0&=\Hom_{\D(R)}({R}/{\iq}[-k],\rhomdot_R({R}/{\ip}[-j],\DDD))\\
      & \cong \Hom_{\D(R)}({R}/{\iq},\rhomdot_R({R}/{\ip},\DDD)[k+j])\notag\\
      & =\h^{k+j}(\rhomdot_R({R}/{\iq},\rhomdot_R({R}/{\ip},\DDD)))\notag
  \end{align}
for all $j\leq n$ and all $\ip\in Z$. Using $\otimes-\hom$ adjunction, the latter fact is equivalent to
  \begin{align*}
      0&=\h^{k+j}(\rhomdot_R({R}/{\iq}\otimes_R^{\LL}{R}/{\ip},\DDD  )\\
       &\cong \h^{j-n}(\rhomdot_R({R}/{\iq}\otimes_R^{\LL}{R}/{\ip}[-k-n],\DDD  ),
  \end{align*}
for all $j\leq n$ and all $\ip\in Z$. Making the change of variables $i=j-n$,  we conclude that $\iq\in\phi^\ddd (k)$ if and only if
  \begin{equation}
  \label{lastcondition1}
   \Hom_{\D(R)}({R}/{\iq}\otimes_R^{\LL}{R}/{\ip}[-k-n],\DDD  [i])=0,
  \end{equation}
for all $i\leq 0$ and all $\ip\in Z$.
Proposition~\ref{4.7}  shows that (\ref{lastcondition1}) is equivalent to saying that
  \begin{align*}
      0&=\Hom_{\D(R)}(\h^s({R}/{\iq}\otimes_R^{\LL}{R}/{\ip}[-k-n])[-s],
      \DDD   [i])\\
      &\cong \Hom_{\D(R)}(\h^{s-k-n}
      ({R}/{\iq}\otimes_R^{\LL}{R}/{\ip})[-s],\DDD   [i])
  \end{align*}
for all $s\in\ZZ$, all $i\leq 0$ and all $\ip\in Z$. The expression labeled (\ref{dualdelacanonica}) in the proof of Proposition~\ref{dual-of-canonical-t-structure} tells us that this last condition for $\iq$ amounts to saying that $\h^{s-k-n}({R}/{\iq}\otimes_R^{\LL}{R}/{\ip})[-s]\in\CU_{\fcm}\cap \Dbc(R)$ or, equivalently, that $\supp(\h^{s-k-n}({R}/{\iq}\otimes_R^{\LL}{R}/{\ip}))\subset\fcm (s)$, for all $s\in\ZZ$ and all $\ip\in Z$. Since the homology of that (derived) tensor product could be nonzero only in case $s-k-n\leq 0$, we make a change of variable $-t=s-k-n$, so that
  \[
   \h^{-t}({R}/{\iq}\otimes_R^{\LL}{R}/{\ip})=
   \tor_t^R({R}/{\iq},{R}/{\ip})
  \]
and $s=n+k-t$, for all $t\geq 0$. We then conclude that $\iq\in\phi^\ddd (k)$ if and only if, for all $t\geq 0$ and all $\ip\in Z$,
  \begin{equation}
  \label{lastcondition2}
  \supp(\tor_t^R({R}/{\iq},{R}/{\ip}))\subset\fcm (k+n-t)
  \end{equation}
Together with the previous paragraph, this characterization of $\phi^\ddd$ proves the looked-for equivalence.

Let us prove now the equivalence between $(2)$ and $(3)$. Let us introduce the sp-filtration ${\xi} \colon \ZZ\to\CP(\spec(R))$ defined by
  \[
   {\xi}(k)=
   \{\iq\in\spec(R)\,\,;\,\,\V(\iq)\cap Z\subset\fcm(k+n)\},
  \]
for each $k\in\ZZ$. An easy exercise shows then that $(3)$ is equivalent to 
  \[
   \supp(\Hom_{\D(R)}(X,\DDD[k]))
   \subset{\xi} (k), \,\,\text{ for all } k\in\ZZ.
  \]
Our goal will be reached once we show that, for each $k\in\ZZ$, ${\xi} (k)$ is the set of prime ideals $\iq\in \spec(R)$ satisfying $\supp(\tor_i^R({R}/{\iq},{R}/{\ip}))\subset\fcm(k+n-i)$ for all $i\geq 0$  and all $\ip\in Z$, otherwise said, ${\xi} =\phi^\ddd$ (see the description of $\phi^\ddd$ in (\ref{lastcondition2})). For that, let us fix an integer $k$. Let us  consider $\iq\in{\xi}(k)$, let $i\geq 0$ be a natural number and take an arbitrary $\ip'\in \supp(\tor_i^R({R}/{\iq},{R}/{\ip}))$. Then
  \[
   \tor_i^{R_{\ip'}}({R_{\ip'}}/{\iq R_{\ip'}},
   {R_{\ip'}}/{\ip R_{\ip'}})\neq 0,
  \]
and this fact implies that $\ip'$ contains both $\iq$ and $\ip$. Then $\ip'\in\V(\iq)\cap\V(\ip)\subset\V(\iq)\cap Z$. Since $\iq\in{\xi} (k)$, we conclude that $\ip'\in\fcm (k+n)$, which implies that $\ip'\in\fcm (k+n-i)$ because $\fcm$ is decreasing. That proves that $\iq\in\phi^\ddd (k)$, so that we get the inclusion ${\xi} (k)\subset\phi^\ddd (k)$. Conversely, assume that $\iq\in\phi^\ddd (k)$ that is (according to (\ref{primeradef}))
  \[
   0=\h^{k+j}(\rhomdot_R({R}/{\iq},
   \rhomdot_R({R}/{\ip},\DDD)))
  \]
for all $j\leq n$ and all $\ip\in Z$ or, equivalently, that
  \begin{equation}
  \label{ecuacion}
      0=\h^{i}(\rhomdot_R({R}/{\iq},\rhomdot_R({R}/{\ip},\DDD))),
  \end{equation}
for any $i\leq k+n$  and all $\ip\in Z$.
We need to prove that $\V(\iq)\cap Z\subset\fcm (k+n)$. Indeed, if $\ip'\in\V(\iq)\cap Z$ then $\ip'\in\phi^\ddd (k)$ because $\phi^\ddd (k)$ is a sp-subset. So the equality in (\ref{ecuacion}) is true for $\iq=\ip=\ip'$, that is
  \begin{align*}
      0=&\h^{i}(\rhomdot_R({R}/{\ip'},\rhomdot_R({R}/{\ip'},\DDD  )))\\
       =&\Hom_{\D(R)}({R}/{\ip'},\rhomdot_R({R}/{\ip'},\DDD  )[i]),
  \end{align*}
for all $i\leq k+n$. But then, viewing $\rhomdot_R({R}/{\ip'},\DDD  )$ as an object of $\D(R/\ip')$, we get that $\Hom_{\D(R/\ip')}({R}/{\ip'},\rhomdot_R({R}/{\ip'},\DDD  )[i])=0$ for all $i\leq k+n$ ({\cfr} \ref{ANTES-0.4}). The last is equivalent to saying that $\rhomdot_R({R}/{\ip'},\DDD  )$ belongs to $\Dg{k+n}(R/\ip')$. But $\rhomdot_R({R}/{\ip'},\DDD  )$ is a dualizing complex over $R/\ip'$ (cf. \cite[Chapter~V, Proposition~2.4, p.~260]{RD}) and then the associated codimension function, which we denote by $\bar{\ddd}$, has the property that $\bar{\ddd}(\bar{0})>k+n$, where $\bar{0}\in\spec(R/\ip')$ is the generic point. It will be enough to check that $\ddd(\ip')\geq\bar{\ddd}(\bar{0})$ or, equivalently, to prove that if $\Hom_{\D(R_{\ip'})}(k(\ip'),\DDD  _{\ip'}[i])\neq 0$ then $\Hom_{\D(k(\ip'))}(k(\ip'),\rhomdot_{k(\ip')}(k(\ip'),{\DDD  }_{\ip'}[i]))\neq 0$. But this last fact follows from remark~\ref{ANTES-0.4}.
\end{proof}

\begin{lem}
\label{Kashiwara2-generalized}
Under the hypothesis of Lemma~\ref{Kashiwara1-generalized} the following assertions are equivalent for $X\in \Dbc(R)$:
  \begin{enumerate}
     \item $\tau^{\leq n} \R\Gamma_Z X $
           belongs to $\Dbc(R);$
     \item for each $k\in\ZZ$ and all
           $\iq\in \supp(\Hom_{\D(R)}(X ,\DDD   [k]))$,
           either  $\iq\in Z$ or $Z\cap\V(\iq)\subset\fcm (k+n).$
  \end{enumerate}
\end{lem}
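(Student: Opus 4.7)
My plan is to adapt the proof of Lemma~\ref{Kashiwara1-generalized} to the finite-generation condition in place of vanishing. Introduce the sp-filtration $\phi\colon\ZZ\to\CP(\spec(R))$ with $\phi(j)=Z$ for $j\leq n$ and $\phi(j)=\varnothing$ for $j>n$; by Corollary~\ref{ejemplo} the associated left truncation functor is $\tleq{}{\phi}=\tau^{\leq n}\R\Gamma_Z$. Since $X\in\Dbc(R)$ and $\Dbc(R)$ is closed under cones, condition $(1)$ is equivalent to asking that both $\tleq{}{\phi}X$ and $\tg{}{\phi}X$ belong to $\Dbc(R)$.

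To phrase condition $(2)$ in terms of a filtration, define $\psi\colon\ZZ\to\CP(\spec(R))$ by
\[
\psi(k):=Z\cup\{\iq\in\spec(R)\,;\,\V(\iq)\cap Z\subseteq\fcm(k+n)\}.
\]
A direct check shows $\psi$ is a decreasing sp-filtration (using that $Z$ is stable under specialization, that $\fcm$ is decreasing, and that $\V(\iq')\subseteq\V(\iq)$ whenever $\iq\subseteq\iq'$). Theorem~\ref{classification-c-g} then identifies condition $(2)$ with $\rhomdot_R(X,\DDD)\in\CU_\psi\cap\Dbc(R)$.

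To establish $(1)\Leftrightarrow (2)$ I would mirror the duality-plus-tor-support calculation of Lemma~\ref{Kashiwara1-generalized}. Unfolding the orthogonality characterizing $\psi$ via $\otimes$-$\hom$ adjunction yields tor-support constraints of the form $\supp\tor^R_t(R/\iq,R/\ip)\subseteq\fcm(k+n-t)$ for $t\geq 0$, $\ip\in Z$, and $\iq\notin Z$; the complementary case $\iq\in Z$ imposes no constraint, reflecting that $X\in\Dbc(R)$ already makes $\Hom_{\D(R)}(X,\DDD[k])$ finitely generated and hence the $\iq$-local contribution to $\h^i(\R\Gamma_Z X)$ finitely generated. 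Assembling this with the duality of \S\ref{islasduales}, which interchanges aisles and co-aisles under $\rhomdot_R(-,\DDD)$ in $\Dbc(R)$, gives the equivalence.

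The main obstacle I expect is that the class $\{X\in\Dbc(R)\,;\,(1)\text{ holds}\}$ is \emph{not} itself a total pre-aisle of $\Dbc(R)$: it fails stability under $X\mapsto X[-1]$, since $\tau^{\leq n+1}\R\Gamma_Z X$ can carry a non-finitely-generated cohomology in degree $n+1$ even when $\tau^{\leq n}\R\Gamma_Z X\in\Dbc(R)$. Consequently the classification of total pre-aisles cannot be applied to it as a black box. The workaround, inspired by the explicit dual-filtration computation in the proof of Lemma~\ref{Kashiwara1-generalized}, is to argue prime by prime: at each $\iq\notin Z$ with $\V(\iq)\cap Z\neq\varnothing$, localize at $\iq$ and invoke Lemma~\ref{Kashiwara1-generalized} over $R_\iq$ to reduce the $\Dbc$-truncation condition to the vanishing condition already handled by that lemma.
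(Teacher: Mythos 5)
Your reformulation of condition $(2)$ via the filtration $\psi(k)=Z\cup\phi^\ddd(k)$ and Theorem~\ref{classification-c-g} is correct, and you are right that the class of $X\in\Dbc(R)$ satisfying $(1)$ is not shift-stable, so the duality of \S\ref{islasduales} cannot be applied as a black box. But your proposed workaround for $(2)\Rightarrow(1)$ --- ``localize at each $\iq\notin Z$ and invoke Lemma~\ref{Kashiwara1-generalized} over $R_\iq$'' --- has two defects that I do not see how to repair. First, finite generation of an $R$-module is not a condition that can be verified prime by prime: $N_\iq$ can be finitely generated over $R_\iq$ for every $\iq$ while $N$ fails to be finitely generated over $R$, so even if the localizations of $\tau^{\leq n}\R\Gamma_Z X$ were all controlled you would not obtain $(1)$. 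Second, the condition in $(2)$ involves $\V(\iq)$, the set of specializations of $\iq$, whereas $\spec(R_\iq)$ consists of the generalizations of $\iq$; indeed $\spec(R_\iq)\cap\V(\iq)=\{\iq\}$, so localizing at $\iq$ discards precisely the geometry that $(2)$ is about.

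The paper's route for $(2)\Rightarrow(1)$ is a structural reduction, not a local one. Condition $(2')$ says $\rhomdot_R(X,\DDD)\in\CV$, where $\CV=\{Y\in\Dbc(R)\,:\,\supp(\h^k(Y))\subset\psi(k)\text{ for all }k\}$. Any such $Y$ is built, by finitely many distinguished triangles, out of stalk complexes $\h^k(Y)[-k]$, and each $\h^k(Y)$ admits a finite module filtration with subquotients $R/\ip$ for $\ip\in\supp\h^k(Y)\subset\psi(k)$. Since $R$ is Noetherian, the property ``$\tau^{\leq n}\R\Gamma_Z(-)$ has finitely generated homologies'' passes across these triangles (extensions and submodules/quotients of finitely generated modules are finitely generated), so one is reduced to $X=\rhomdot_R(R/\ip[-k],\DDD)$ with $\ip\in Z\cup\phi^\ddd(k)$. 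That case splits cleanly: if $\ip\in Z$ then $\R\Gamma_Z X\cong X$ and the truncation is in $\Dbc(R)$; if $\ip\in\phi^\ddd(k)$ then Lemma~\ref{Kashiwara1-generalized} (via its condition $(1')$) gives $\R\Gamma_Z X\in\Dg{n}(R)$, so $\tau^{\leq n}\R\Gamma_Z X=0$. That dichotomy, not localization, is the missing idea in your proposal.
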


\begin{proof}
Continuing with the notation in the proof of Lemma \ref{Kashiwara1-generalized}, we have
  \[
   \phi^\ddd (k)={\xi} (k)=\{\iq\in \spec(R)\,\,;\,\,
   \V(\iq)\cap Z\subset\fcm(k+n)\},
  \]
for all $k\in Z$, so that condition $(2)$ can be rewritten as:
  \begin{enumerate}
      \item[$(2')$] $\supp(\Hom_{\D(R)}(X ,\DDD   [k])
      \subset Z\cup\phi^\ddd (k)$, for all $k\in\ZZ$.
  \end{enumerate}

Let us check that $(1)$ implies $(2')$.
Assume that $\tleq{ }{\phi}{}\Xdot=\tleq{n}{}\R\Gamma_Z \Xdot$ belongs to $\Dbc(R)$, then the third vertex in the canonical triangle
  \begin{equation}
  \label{triangulo}
      \tleq{}{\phi}{}\Xdot \lto X\lto
      \tg{}{\phi}{}\Xdot \stackrel{+}{\lto},
  \end{equation}
is in $\Dbc(R)$. Observe that the complex $\R\Gamma_Z\,\tg{ }{\phi}{}\Xdot$ belongs to $\Dg{n}(R)$ since $\tg{}{\phi}{}\Xdot\in{\CU_\phi}^\perp=\cu{n\,\perp}{Z}$. Then, by Lemma~\ref{Kashiwara1-generalized}, we get that
  \[
   Z\cap\supp(\Hom_{\D(R)}(\tg{ }{\phi}{}\Xdot ,\DDD[k]))
   \subset\fcm (k+n),
  \]
for all $k\in\ZZ$ or, equivalently (see the proof of the referred Lemma), that
  \[
   \supp(\Hom_{\D(R)}(\tg{}{\phi}{}\Xdot ,\DDD[k]))
   \subset \phi^\ddd (k),
  \]
for any $k\in\ZZ$. Furthermore, note that
  \[
  (\Hom_{\D(R)}(\tleq{}{\phi}{}\Xdot ,\DDD[k]))_\ip\cong
  \Hom_{\D(R)}((\tleq{ }{\phi}{}\Xdot)_\ip ,\DDD_\ip[k]))
  \]
for any $\ip\in\spec(R)$, therefore $\supp (\Hom_{\D(R)}(\tleq{}{\phi}{}\Xdot ,\DDD[k]))\subset Z$ because $\supp(\tleq{}{\phi}{}\Xdot) \subset Z$. Now  applying the homological functor $\Hom(-,\DDD):=\Hom_{\D(R)}(-,\DDD)$
to the triangle (\ref{triangulo}) we get an exact sequence of $R$-modules
  \[
   \Hom(\tg{ }{\phi}{}\Xdot,\DDD[k])\lto
   \Hom(X ,\DDD[k])\lto
   \Hom(\tleq{ }{\phi}{}\Xdot ,\DDD[k])
  \]
from which $\supp(\Hom_{\D(R)}(X ,\DDD[k]))\subset\phi^\ddd (k)\cup Z$ as desired.

Let us check that $(2')$ implies $(1)$. The functor
\[
\rhomdot_R(-,\DDD)\colon \Dbc(R)\lto \Dbc(R)
\]
is a duality of triangulated categories and the class of objects
  \[
   \CV= \{Y\in \Dbc(R)\,\,;\,\,\supp(\h^k(Y))
   \subset\phi^\ddd (k)\cup Z\text{ for all }k\in\ZZ\}
  \]
can be built up by a finite number of iterated extensions from those objects in the class which are stalk complexes of the form $M[-k]$, with $M$ a finitely generated $R$-module such that $\supp(M)\subset\phi^\ddd (k)\cup Z$. So it is enough to prove that $(2')$ implies $(1)$ for those complexes $\Xdot\in\Dbc(R)$ such that $\rhomdot_R(X,\DDD)\cong M[-k]$, with $\supp(M)\subset\phi^\ddd (k)\cup Z$. Moreover every such $M$ admits a filtration
  \[
   0=M_0\subsetneq M_1\subsetneq \dots
   \subsetneq M_{n-1}\subsetneq M_n=M
  \]
such that ${M_i}/{M_{i-1}} \cong {R}/{\ip_i}$, with $\ip_i\in\supp(M)$ for $i\in\{1,\ldots,n\}$ (see \cite[Theorem 6.4]{ma}). This implies that $M[-k]$ can be built by iterated extensions from stalk complexes $R/\ip [-k]$, with $\ip\in \supp(M)\subset\phi^\ddd (k)\cup Z$. Therefore it is enough to prove $(2')\Longrightarrow (1)$ in the particular case in which $\rhom_R(X ,D )\cong R/\ip [-k]$, with $\ip\in \phi^\ddd (k)\cup Z$.

So let $X=\rhomdot_R({R}/{\ip}[-k],\DDD)$ with $\ip\in \phi^\ddd (k)\cup Z$. If $\ip\in Z$ then $\supp (X )\subset\supp (R/\ip [-k]) =\V (\ip)\subset Z$, whence $\R\Gamma_ZX \cong X$ (by Theorem~\ref{suph}). Then $\tau^{\leq n} \R\Gamma_Z X \cong\tau^{\leq n}X$ belongs to $\Dbc(R)$. In case that $\ip\in\phi^\ddd (k)$, we have $\rhomdot_R(X ,\DDD) \cong {R}/{\ip}[-k]$ so
  \[
   \supp(\Hom_{\D(R)}(X ,\DDD  
   [j]))=\supp(\h^j({R}/{\ip}[-k]))\subset\phi^\ddd (j),
  \]
for all $j\in\ZZ$.  That is exactly what condition $(1')$ in the proof of Lemma~\ref{Kashiwara1-generalized} says, hence $\R\Gamma_Z X\in \Dg{n}R$, that is, $\tau^{\leq n} \R\Gamma_Z X =0$ and assertion $(1)$ trivially holds in this case.
\end{proof}

\begin{thm}
\label{Stanley-conjecture}
Let $R$ be a commutative Noetherian ring that admits a dualizing complex. Given a sp-filtration $\phi \colon \ZZ\to\CP(\spec(R))$ the following are equivalent:
  \begin{enumerate}
     \item $\cu{}{\phi}\cap \Dbc(R)$ is an aisle
           of $\Dbc(R)$;
     \item $\phi$ satisfies the  weak Cousin condition.
  \end{enumerate}
\end{thm}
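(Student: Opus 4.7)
The direction $(1)\Rightarrow (2)$ is immediate from Corollary~\ref{Cousin} applied with $\sharp=\bb$, so I focus on $(2)\Rightarrow (1)$. Since $R$ has a dualizing complex, $R$ has finite Krull dimension, and by splitting along idempotents I may reduce to the case in which $\spec(R)$ is connected. If $\phi$ is one of the two constant sp-filtrations, the class $\CU_\phi\cap \Dbc(R)$ is trivially an aisle ({\cfr}~\ref{idempotentes}); otherwise, Corollary~\ref{discreteness-of-filtration2} guarantees that $\phi$ is a finite sp-filtration determined in some interval $[s,n]\subset \ZZ$, so I may proceed by induction on $\ell=\lth(\phi)$. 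The base case $\ell\le 2$ is covered by Proposition~\ref{dos-pasos}, which yields the aisle property on $\Dc(R)$ and hence on $\Dbc(R)$ via Corollary~\ref{5.11}.

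For the inductive step let $\phi'$ be the sp-filtration associated to $\phi$ as in~\ref{parausar2}; then $\phi'$ also satisfies the weak Cousin condition and has length $\ell-1$, so by induction $\CU_{\phi'}\cap \Dbc(R)$ is an aisle of $\Dbc(R)$. By Lemma~\ref{induction-tool} (in the form $(3)\Rightarrow (1)$ for $\sharp=\bb$) it suffices to verify that $\tleq{n}{}\R\Gamma_{\phi(n)}M\in \Dbc(R)$ for every $M\in \CU_{\phi'}^{\perp}\cap \Dbc(R)$. Translating this through Lemma~\ref{Kashiwara2-generalized} with $Z=\phi(n)$, the task reduces to showing that, for each $k\in\ZZ$ and each $\iq\in\supp(\Hom_{\D(R)}(M,\DDD[k]))$, either $\iq\in \phi(n)$ or $\phi(n)\cap \V(\iq)\subset \fcm(k+n)$.

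Given $\iq\notin \phi(n)$ and $\ip\in \phi(n)\cap \V(\iq)$, pick a saturated chain $\iq=\ip_0\subsetneq \ip_1\subsetneq\cdots\subsetneq \ip_r=\ip$ with $r\ge 1$. Iterating the weak Cousin condition backwards along this chain from $\ip\in \phi(n)$ produces $\iq=\ip_0\in \phi(n-r)$, and since $n-r\le n-1$ we have $\phi(n-r)=\phi'(n-r)$. The hypothesis $M\in \CU_{\phi'}^{\perp}$ combined with Lemma~\ref{Kashiwara1-generalized}(3) then gives $\iq\in \fcm(k+n-r)$, i.e.\ $\ddd(\iq)>k+n-r$; because $\ddd$ is a codimension function and the chain is saturated, $\ddd(\ip)=\ddd(\iq)+r>k+n$, so $\ip\in \fcm(k+n)$, as required. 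The point I expect to be the genuine obstacle---until the setup is in place---is the precise matching of two shifts: the weak Cousin condition shifts filtration index by $1$ per immediate-specialization step, while $\ddd$ increases by $1$ per step in a saturated chain, and the dualizing complex, through Lemmas~\ref{Kashiwara1-generalized} and~\ref{Kashiwara2-generalized}, is exactly what supplies the dictionary between $\phi$-supports and $\fcm$-supports that makes these arithmetics line up.
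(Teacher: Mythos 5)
Your proposal is correct and follows essentially the same path as the paper's proof: the same reduction to the connected, finite-length case, the same induction on $\lth(\phi)$ with Proposition~\ref{dos-pasos} as the base and Lemma~\ref{induction-tool} as the inductive engine, and the same translation through Lemmas~\ref{Kashiwara1-generalized} and \ref{Kashiwara2-generalized}. The only cosmetic difference is in the combinatorial step: the paper proves a standalone claim by choosing a minimal $\ip$ in $\V(\iq)\cap\phi(n)$ and walking down one step to $\iq_{s-1}$, whereas you walk the whole saturated chain from $\iq$ up to an arbitrary $\ip$ and iterate the weak Cousin condition, then propagate via the codimension function---an equivalent bookkeeping of the same shifts.
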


\begin{proof}
By Corollary~\ref{Cousin} we only need to prove $(2) \imp (1)$. Without loss of generality we may assume that $\spec(R)$ is connected and that $\phi$ is a nonconstant sp-filtration (if it is necessary, localize respect to an idempotent element of $R$ and use Proposition~\ref{localizacion}). Let $\DDD$ be a dualizing complex for $R$, with associated codimension function $\ddd\colon  \spec(R)\to \ZZ$. Under this hypothesis $R$ has finite Krull dimension ({\cfr} \cite[Chapter~V, Corollary~7.2, p.~283]{RD}) and, hence, we know that the sp-filtration $\phi$ is finite of length $\geq 1$. More precisely, there are integers $t\leq n$ such that $\phi$ is determined in the interval $[t,n]\subset \ZZ$, with $\phi(t)=\spec(R)$ (see Corollary~\ref{discreteness-of-filtration2}).

We claim that the following is true for an integer $m\in\ZZ$ and a prime ideal $\iq\in \spec(R)$:
        \begin{quote}
               If $\iq$
               has the property that
               $\V(\iq)\cap\phi (i)\subset\fcm (k+i),$
               for all $i<m$, then either $\iq\in\phi (m)$ or
               $\V(\iq)\cap\phi(m)\subset\fcm (k+m)$.
        \end{quote}
Indeed, suppose that $\iq\not\in\phi(m)$. If $\V(\iq)\cap\phi (m)=\varnothing$ we are done, so we assume that $\V(\iq)\cap\phi (m)$ is nonempty. Choose a minimal element $\ip$ of $\V(\iq)\cap\phi (m)$ and consider a maximal chain of prime ideals
  \[
   \iq=\iq_0\subsetneq\iq_1\subsetneq
   \dots\subsetneq\iq_s=\ip.
  \]
Then the  weak Cousin condition  says that $\iq_{s-1}\in\phi (m-1)$, so that $\iq_{s-1}\in \V(\iq)\cap\phi (m-1)\subset\fcm (k+m-1)$. Therefore $\ddd(\iq_{s-1})>k+m-1$ and, since $\ddd$ is a codimension function, we conclude that $\ddd(\ip)>k+m$ or, equivalently, that $\ip\in\fcm (k+m)$. That proves our claim.

For the rest of the proof assume, without loss of generality, that
  \[
   \spec(R)= \phi (0)\supsetneq \phi (1)\supsetneq
   \dots\supsetneq \phi (n)\supsetneq \phi (n+1)=\varnothing
  \]
with $n+1=\lth(\phi)$. The class $\CU_\phi\cap \Dbc(R)$ is an aisle of $\Dbc(R)$ if $\lth(\phi)=n+1\leq 2$ by Proposition~\ref{dos-pasos}. Suppose that $\lth(\phi)=n+1 > 2$,  then $\phi'$ is a sp-filtration satisfying the  weak Cousin condition  such that $\lth(\phi')=n$. By induction on the length of the sp-filtrations we can assume that $\CU_{\phi'}\cap \Dbc(R)$ is an aisle of $\Dbc(R)$. Then, as a consequence of Lemma~\ref{induction-tool}, checking that $\CU_{\phi}\cap \Dbc(R)$ is an aisle of $\Dbc(R)$  turns out to be equivalent to proving that $\tleq{n}{}\R\Gamma_{\phi (n)}M$ belongs to $\Dbc(R)$  for any $M\in\CU_{\phi'}^\perp\cap \Dbc(R)$. So let $M\in\cu{\perp}{\phi'}\cap \Dbc(R)$, then we have that $\R\Gamma_{\phi (i)}M \in \Dg{i}(R)$, for all $i<n$. From Lemma~\ref{Kashiwara1-generalized}, we conclude that $\phi (i)\cap \supp (\Hom_{\D(R)}(M ,\DDD   [k]))\subset\fcm (k+i)$, for all $k\in\ZZ$ and all $i<n$. This, in particular, implies that if $k\in\ZZ$ and $\iq\in \supp(\Hom_{\D(R)}(M ,\DDD   [k]))$ then $\V(\iq)\cap\phi (i)\subset\fcm (k+i)$, for all $i<n$. Now,  applying the claim above, we get  that, for every $k\in\ZZ$ and every $\iq\in \supp(\Hom_{\D(R)}(M ,\DDD   [k]))$, either $\iq\in\phi (n)$ or $\V(\iq)\cap\phi (n)\subset\fcm (k+n)$. Then, by Lemma~\ref{Kashiwara2-generalized}, we get that $\tau^{\leq n}\R\Gamma_{\phi (n)}M\in \Dbc(R)$ as it is desired.
\end{proof}

\begin{cor}
\label{corStanley-conjecture}
Let $R$ be a commutative Noetherian ring with a pointwise dualizing complex. Then for any sp-filtration $\phi \colon \ZZ\to\CP(\spec(R))$ the following are equivalent:
  \begin{enumerate}
     \item $\cu{}{\phi}\cap \Dc(R)$ is an aisle
           of $\Dc(R)$;
     \item $\phi$ satisfies the  weak Cousin condition.
  \end{enumerate}
\end{cor}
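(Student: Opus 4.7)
The implication $(1)\Rightarrow(2)$ is the special case $\sharp=\textrm{``blank''}$ of Corollary~\ref{Cousin}. For $(2)\Rightarrow(1)$ the strategy is to reduce to Theorem~\ref{Stanley-conjecture} by localizing at each prime, and then to upgrade the resulting stalkwise information to a global statement.

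Fix $\iq\in\spec(R)$. Since $R$ has a pointwise dualizing complex, $R_\iq$ admits a (genuine) dualizing complex and therefore has finite Krull dimension. The localized filtration $\phi_\iq$ defined in Proposition~\ref{localizacion} inherits the weak Cousin condition from $\phi$. Theorem~\ref{Stanley-conjecture} applied to $R_\iq$ then shows that $\CU_{\phi_\iq}\cap\Dbc(R_\iq)$ is an aisle of $\Dbc(R_\iq)$, and Corollary~\ref{5.11}---which applies because $R_\iq$ has finite Krull dimension---lifts this to the statement that $\CU_{\phi_\iq}\cap\Dc(R_\iq)$ is an aisle of $\Dc(R_\iq)$. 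Consequently, for any $\Xdot\in\Dc(R)$ and every $\iq$, Proposition~\ref{localizacion} gives $(\tleq{}{\phi}\Xdot)_\iq\cong\tleq{}{\phi_\iq}\Xdot_\iq\in\Dc(R_\iq)$, and similarly for $\tg{}{\phi}\Xdot$.

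It remains to deduce that $\tleq{}{\phi}\Xdot\in\Dc(R)$, i.e., that each $\h^j(\tleq{}{\phi}\Xdot)$ is a finitely generated $R$-module. A useful preliminary reduction is available: applying $\tleq{}{\phi}$ to the canonical triangle $\tleq{j+1}{}\Xdot\lto\Xdot\lto\tg{j+1}{}\Xdot\overset{+}{\lto}$ and invoking Lemma~\ref{5.3} (so that $\tleq{}{\phi}\tg{j+1}{}\Xdot\in\Dgeq{j+2}(R)$), the associated long exact sequence of homology yields an isomorphism $\h^j(\tleq{}{\phi}\Xdot)\cong\h^j(\tleq{}{\phi}\tleq{j+1}{}\Xdot)$ with $\tleq{j+1}{}\Xdot\in\Dbc(R)$. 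Thus the problem is reduced to showing that $\h^j(\tleq{}{\phi}\Ydot)$ is finitely generated for every $\Ydot\in\Dbc(R)$ and every $j\in\ZZ$.

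The hard part will be precisely this last step: stalkwise finite generation does not in general imply global finite generation (for example, $\bigoplus_{\ip\in\Max(R)}R/\ip$ is stalkwise finitely generated but typically not globally so). To complete the argument one expects to use additional structure coming from the global pointwise dualizing complex $\DDD$---for instance, by running, for each specific bounded complex $\Ydot\in\Dbc(R)$, the inductive local-cohomology argument used in the proof of Theorem~\ref{Stanley-conjecture} (via Lemmas~\ref{Kashiwara1-generalized} and~\ref{Kashiwara2-generalized}) against the finite length filtration $\phi_\iq$ at each prime, and then gluing the resulting subquotient descriptions of $\h^j(\tleq{}{\phi}\Ydot)$ by exploiting the compact Koszul generators of $\CU_\phi$ given in Corollary~\ref{corimportante} together with Noetherian quasi-compactness of $\spec(R)$.
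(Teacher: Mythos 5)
Your plan follows the paper's route essentially step for step: $(1)\Rightarrow(2)$ is Corollary~\ref{Cousin}; for $(2)\Rightarrow(1)$ one localizes at each prime $\iq$, uses that $R_\iq$ has a dualizing complex and hence finite Krull dimension, and applies Theorem~\ref{Stanley-conjecture} together with Corollary~\ref{5.11} to see that the $\phi_\iq$-triangle of $\Xdot_\iq$ lies in $\Dc(R_\iq)$. At that point the paper simply asserts that a $\phi$-triangle lies in $\Dc(R)$ if and only if all of its localizations lie in $\Dc(R_\iq)$, and concludes. You are right to be suspicious of exactly this assertion: stalkwise finite generation does not imply global finite generation in general --- your example does the job, and so does $k[x,y]/(x)\oplus\bigoplus_{a\in k}k[x,y]/(x,y-a)$, which is stalkwise finitely generated with \emph{closed} support and still not finitely generated. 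So the one step you flag as the ``hard part'' is precisely the step the published proof does not argue for.

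Your own proposal, however, does not supply the missing argument either. Two concrete defects: (i) $\tleq{}{\phi}$ is a right adjoint of the inclusion of a pre-aisle, not a triangulated functor, so ``applying $\tleq{}{\phi}$ to the canonical triangle'' is not a valid operation; to obtain $\h^j(\tleq{}{\phi}\Xdot)\cong\h^j(\tleq{}{\phi}\tleq{j+1}{}\Xdot)$ you would need to run the octahedral axiom on the factorization $\tleq{}{\phi}\tleq{j+1}{}\Xdot\to\tleq{}{\phi}\Xdot\to\Xdot$ and show the resulting cone lies in $\Dg{j}(R)$, which requires its own argument via Lemma~\ref{5.3}. (ii) The final paragraph is a statement of hope, not a deduction; it is not apparent how compact Koszul generators and quasi-compactness of $\spec(R)$ would rule out the very obstruction you raised, which is itself a Noetherian phenomenon. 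Observe also that the step is genuinely non-trivial only when $R$ has \emph{infinite} Krull dimension (otherwise Corollary~\ref{5.11} applies globally and Theorem~\ref{Stanley-conjecture} settles the matter), and in that case $\phi$ is typically not a finite sp-filtration, so the inductive machinery of Lemma~\ref{induction-tool} and Lemmas~\ref{Kashiwara1-generalized}, \ref{Kashiwara2-generalized} does not terminate. A finiteness argument tailored to $\tleq{}{\phi}\Xdot$ in that regime is what is actually needed, and neither your proposal nor, as far as one can read it, the printed proof supplies one.
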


\begin{proof}
Given a complex $X\in \Dc(R)$ let us consider the $\phi$-triangle with central vertex $X$
  \begin{equation}
  \label{triangulo2}
      U\lto X\lto
      V\stackrel{+}{\lto}
  \end{equation}
By localizing at any prime ideal $\ip$, we obtain a $\phi_\ip$-triangle in $\D(R_\ip)$ with central vertex $\Xdot_\ip$
  \begin{equation}
  \label{triangulo2p}
   U_\ip\lto
   X_\ip\lto
   V_\ip
   \stackrel{+}{\lto}.
  \end{equation}
Note that the triangle $(\ref{triangulo2})$ is in $\Dc(R)$ if and only if for all $\ip\in\spec(R)$ the triangle (\ref{triangulo2p}) belongs to $\Dc(R_\ip)$. Furthermore, a sp-filtration $\phi$ of $\spec(R)$ satisfies the weak Cousin condition  if and only for any $\ip\in\spec(R)$ the sp-filtration $\phi_\ip$ of $\spec(R_\ip)$ satisfies the  weak Cousin condition. So we derive the truth of this result from Theorem~\ref{Stanley-conjecture} and Corollary~\ref{5.11} because for each $\ip\in\spec(R)$ the ring $R_\ip$ admits a dualizing complex and, hence, has finite Krull dimension.
\end{proof}

\begin{cor}
\label{conjeturadeStanley}
Let $R$ be a commutative Noetherian ring with dualizing complex. For any $\sharp \in \{-,+,\bb,\text{``blank''}\}$, the assignment $\phi\rightsquigarrow\CU_\phi\cap\Dc^\sharp (R)$ defines a
one-to-one correspondence between:
  \begin{enumerate}
    \item  sp-filtrations of $\spec(R)$
           satisfying the weak Cousin condition;
    \item  aisles of $\Dc^\sharp(R)$ generated by bounded complexes; and
    \item  aisles of $\Dc^\sharp(R)$ generated by perfect complexes.
  \end{enumerate}
In particular, in case $\sharp\in\{-, \bb\}$, the assignment $\phi\rightsquigarrow\CU_\phi\cap\Dc^\sharp (R)$ defines a
bijection between the set of sp-filtrations of $\spec(R)$ and the set of aisles of $\Dc^\sharp (R)$.
\end{cor}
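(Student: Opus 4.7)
The plan is to reduce this classification to the results already established for $\Dbc(R)$ (Theorem~\ref{Stanley-conjecture}) via the equivalences provided by Corollary~\ref{5.11} and the description of compactly generated aisles given in Section~\ref{compactas}. I would begin by observing that since $R$ admits a dualizing complex, $R$ has finite Krull dimension, so Corollary~\ref{5.11} applies: for any sp-filtration $\phi$, the statement ``$\CU_\phi\cap\Dc^\sharp(R)$ is an aisle'' is equivalent across all four choices of $\sharp$. Combining with Theorem~\ref{Stanley-conjecture} (the case $\sharp=\bb$), I obtain that for every $\sharp$,
\[
    \CU_\phi\cap\Dc^\sharp(R)\text{ is an aisle of }\Dc^\sharp(R)\,\,\Longleftrightarrow\,\,\phi\text{ satisfies the weak Cousin condition.}
\]

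Next, I would argue that the assignment $\phi\rightsquigarrow \CU_\phi\cap\Dc^\sharp(R)$ is injective on sp-filtrations, and that the image consists exactly of aisles generated by bounded (equivalently, by perfect) complexes. Injectivity is Corollary~\ref{islasinducidas}: the total pre-aisle $\CU_\phi\cap\Dc^\sharp(R)$ determines $\phi$ uniquely. For surjectivity onto aisles generated by bounded complexes, let $\CV$ be such an aisle of $\Dc^\sharp(R)$; by Corollary~\ref{islasinducidas} applied to its set of bounded (in particular bounded-above) generators, there is a unique $\phi\in\Filsup(R)$ with $\CV=\CU_\phi\cap\Dc^\sharp(R)$, and by Corollary~\ref{Cousin} this $\phi$ necessarily satisfies the weak Cousin condition. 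Conversely, if $\phi$ satisfies the weak Cousin condition, the aisle $\CU_\phi\cap\Dc^\sharp(R)$ is generated within $\Dc^\sharp(R)$ by the stalk complexes $\{R/\ip[-j]\,;\,j\in\ZZ,\ip\in\phi(j)\}$: indeed, the aisle $\CW$ these objects generate in $\Dc^\sharp(R)$ extends (via Proposition~\ref{nuevo}) to an aisle $\CW'={}^\perp(\CW^\perp)$ of $\D(R)$ which coincides with $\CU_\phi$ by the definition of the latter, so that $\CW=\CU_\phi\cap\Dc^\sharp(R)$.

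For the equivalence between generation by bounded complexes and by perfect complexes, I would use that every perfect complex is bounded (trivially giving one implication) and that, by Corollary~\ref{corimportante} together with Theorem~\ref{4.9}, the aisle $\CU_\phi$ in $\D(R)$ is compactly generated by Koszul complexes, which are perfect and lie in $\Dbc(R)\subset\Dc^\sharp(R)$. The same ``extend to $\D(R)$, then restrict'' argument of the previous paragraph shows that these perfect complexes generate $\CU_\phi\cap\Dc^\sharp(R)$ as an aisle of $\Dc^\sharp(R)$, so the two classes of aisles coincide.

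Finally, for the ``in particular'' assertion concerning $\sharp\in\{-,\bb\}$, I would note that every aisle of $\Dc^\sharp(R)$ is tautologically generated by bounded-above (resp.\ bounded) complexes because every object of $\Dcmenos(R)$ (resp.\ $\Dbc(R)$) is bounded-above (resp.\ bounded); hence the restriction to aisles ``generated by bounded complexes'' drops, and the correspondence becomes a bijection between $\Filsup(R)$ restricted to weak-Cousin filtrations and all aisles of $\Dc^\sharp(R)$. The only subtlety worth verifying carefully is this last step for $\sharp=-$: one must check that an aisle of $\Dcmenos(R)$ automatically falls under Corollary~\ref{islasinducidas}, which it does because a generating set of objects in $\Dcmenos(R)$ is already a set of bounded-above complexes. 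No new technical difficulty arises beyond repackaging earlier results; the main conceptual input is Theorem~\ref{Stanley-conjecture}, and the main formal tool is the equivalence of the boundedness conditions given by Corollary~\ref{5.11}.
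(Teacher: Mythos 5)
Your proposal is correct and follows the same route as the paper, which treats the corollary as an immediate consequence of Theorem~\ref{Stanley-conjecture}, Corollary~\ref{5.11}, Theorem~\ref{4.9} and Corollary~\ref{islasinducidas}. Your version spells out the surjectivity/injectivity details and the ``bounded $=$ perfect'' equivalence (via the Koszul generators of Corollary~\ref{corimportante}), which the paper leaves implicit; the only wording worth tightening is that the class $\CW$ generated by the stalk complexes in $\Dc^\sharp(R)$ is a priori just a total pre-aisle, and it becomes an aisle only after the identification $\CW=\CU_\phi\cap\Dc^\sharp(R)$ combined with the earlier conclusion that the latter is an aisle.
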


\begin{proof}
Straightforward consequence of Theorem~\ref{Stanley-conjecture}, using Corollary~\ref{5.11}, Theorem~\ref{4.9} and Corollary~\ref{islasinducidas}.
\end{proof}


\begin{thebibliography}{ABCDEII}

\bibitem[AJL]{AJL} Alonso Tarr{\'{\i}}o, L.; Jerem{\'{\i}}as
L{\'o}pez, A.; Lipman, J.: Local homology and cohomology on schemes, 
{\it Ann.\ Scient.\ {\'E}c.\ Norm.\ Sup.\ }{\bf 30}~(1997), 1--39.

\bibitem[AJS1]{AJS1} Alonso Tarr{\'{\i}}o, L.; Jerem{\'{\i}}as
L{\'o}pez, A.; Souto Salorio, M. J.: Localization in categories of
complexes and unbounded resolutions \textit{Canad. J. Math.} \textbf{52}
(2000), no. 2,  225--247.

\bibitem[AJS2]{AJS2} Alonso Tarr{\'{\i}}o, L.; Jerem{\'{\i}}as
L{\'o}pez, A.; Souto Salorio, M. J.: Construction of $t$-structures and equivalences of derived categories \textit{Trans. Amer. Math. Soc.} \textbf{355} (2003),  2523--2543.

\bibitem[AJS3]{AJS3} Alonso Tarr{\'{\i}}o, L.; Jerem{\'{\i}}as
L{\'o}pez, A.; Souto Salorio, M. J.: Bousfield localization on formal schemes. \textit{J. Algebra} \textbf{278} (2004), no. 2, 585--610.

\bibitem[BBD]{BBD} Be\u{\i}linson, A. A.; Bernstein, J.; Deligne, P.:
Faisceaux pervers. {\it Analysis and topology on singular spaces, I
(Luminy, 1981)}, 5--171, Ast\'erisque, {\bf 100}, Soc. Math. France, Paris,
1982.

\bibitem[Be]{bez} Bezrukavnikov, R.: Perverse coherent sheaves (after Deligne). E-print, \texttt{math.AG/0005152}.

\bibitem[BN]{BN} B\"okstedt, M.; Neeman, A.: Homotopy limits in
triangulated categories, {\it Compositio Math.\ }{\bf 86}~(1993),
209--234.



\bibitem[EGA]{EGA} Grothendieck, A.; Dieudonn\'{e}, J. A.:
\textit{El\'{e}ments de  G\'{e}om\'{e}trie Alg\'{e}brique III},
\'Etude cohomologique des faisceaux coh\'erents. I.
\textit{Publications Math.~I.H.E.S.}, \textbf{11}~(1961).

\bibitem[GN]{GNM} Gudiel Rodr\'\i guez, F.; Narv\'aez Macarro, L.: Explicit models for perverse sheaves. Proceedings of the International Conference on Algebraic Geometry and Singularities (Spanish) (Sevilla, 2001). \textit{Rev. Mat. Iberoamericana} {\bf 19} (2003), no. 2, 425--454.



\bibitem[H]{RD} Hartshorne, R.: {\it Residues and Duality,} Lecture
Notes in Math., no.\,{\bf 20}, Springer-Verlag, New York, 1966.

\bibitem[Ka]{Kashiwara} Kashiwara, M.: $t$-structures on the derived categories of holonomic $\CCD$-modules and coherent $\CO$-modules. \textit{Mosc. Math. J.} {\bf 4} (2004), no. 4, 847--868, 981.

\bibitem[KeV]{KVsus} Keller, B. and Vossieck, D.:
 Sous les cat\'egories d\'eriv\'ees, {\it C. R. Acad. Sci. Paris} S\'er.
 I Math. {\bf 305} (1987), no.~{\bf 6}, 225--228.
 
 



\bibitem[N1]{Nct} Neeman, A.: The chromatic tower for $\D(R)$. With an appendix by Marcel B\"okstedt. \textit{Topology} \textbf{31} (1992), no. 3, 519--532.

\bibitem[N2]{Ntty} Neeman, A.: The connection between the $K$-theory
localization theorem of Thomason, Trobaugh and Yao and the smashing
subcategories of Bousfield and Ravenel, {\it Ann. Sci. \'Ecole Norm. Sup.}
(4) {\bf 25} (1992), no.~5, 547--566 

\bibitem[Mat]{ma} Matsumura, H.: \textit{Commutative ring theory}, Second edition. Cambridge Studies in Advanced Mathematics, \textbf{8}. Cambridge University Press, Cambridge, 1989.

\bibitem[R]{Ric} Rickard, J.: Morita theory for derived categories, {\it
J. London Math. Soc.} (2) {\bf 39} (1989), no.~3, 436--456.

\bibitem[Sp]{Sp}  Spaltenstein, N.: Resolutions of unbounded complexes. 
{\it Compositio Math.} {\bf 65} (1988), no. 2, 121--154.

\bibitem[Sta]{Stanley} Stanley, D.: Invariants of t-structures and classification of nullity classes, preprint, \texttt{math.AG/0602252}.

\bibitem[Ste]{St} Stenstr\"{o}m, B.: {\it Rings of quotients. 
An introduction to methods of ring theory}. Grundlehren der math. Wiss.
{\bf 217}. Springer-Verlag, Berlin-Heidelberg-New York, 1975.


\bibitem[YZ]{yz} Yekutieli, A.; Zhang, J. J.: Dualizing complexes and perverse modules over differential algebras. \textit{Compos. Math.} \textbf{141} (2005), no. 3, 620--654.

\end{thebibliography}
\end{document}